\numberwithin{equation}{section}
\numberwithin{figure}{section}
\theoremstyle{plain}
\newtheorem{thm}{\protect\theoremname}[section]
\theoremstyle{definition}
\newtheorem{rem}[thm]{\protect\remarkname}
\theoremstyle{definition}
\theoremstyle{plain}
\theoremstyle{plain}
\newtheorem{lem}[thm]{\protect\lemmaname}
\theoremstyle{plain}
\theoremstyle{plain}
\theoremstyle{definition}
\theoremstyle{definition}
\theoremstyle{definition}
\newtheorem{claim}[thm]{Claim}
\newcommand{\R}{\mathbb{R}}
\newcommand{\N}{\mathbb{N}}
\newcommand{\Z}{\mathbb{Z}}
\newcommand{\bp}{\begin{proof}}
	\newcommand{\ep}{\end{proof}}
\newcommand{\mmdim}{\mdim_{\mathrm M}}
\newcommand{\mhdim}{\mdim_{\mathrm H}}
\newcommand{\udim}{\overline{\dim}_{\mathrm M}\,}
\newcommand{\ldim}{\underline{\dim}_{\mathrm M}\,}
\newcommand{\bdim}{\dim_{\mathrm M}}
\newcommand{\umbdim}{\overline{\mdim}_{\mathrm M}}
\newcommand{\lmbdim}{\underline{\mdim}_{\mathrm M}}
\newcommand{\umhdim}{\overline{\mdim}_{\mathrm H}}
\newcommand{\lmhdim}{\underline{\mdim}_{\mathrm H}}
\newcommand{\hdim}{\dim_{\mathrm H}}
\newcommand{\htop}{h_{\mathrm{top}}}
\DeclareMathOperator{\diam}{Diam}
\DeclareMathOperator{\mdim}{mdim}
\DeclareMathOperator{\id}{id}
\DeclareMathOperator{\dist}{dist}
\DeclareMathOperator{\Leb}{Leb}
\DeclareMathOperator{\sep}{sep}
\DeclareMathOperator{\shift}{shift}
\providecommand{\conjecturename}{Conjecture}
\providecommand{\corollaryname}{Corollary}
\providecommand{\definitionname}{Definition}
\providecommand{\examplename}{Example}
\providecommand{\lemmaname}{Lemma}
\providecommand{\problemname}{Problem}
\providecommand{\propositionname}{Proposition}
\providecommand{\remarkname}{Remark}
\providecommand{\theoremname}{Theorem}
\providecommand{\taskname}{Task}
\begin{document}

 \author{Qiang Huo$^\S$}
 \address{$^\S$School of Mathematical Sciences, University of Science and Technology of China, Hefei, Anhui, 230026, China}
 \email{qianghuo@ustc.edu.cn}
 \keywords{Metric mean dimension, mean Hausdorff dimension, weighted topological entropy, infinite dimensional fractals, sponge system.}
 \subjclass[2010]{primary: 28A80, 37C45, 28D20; secondary: 37B40, 37A35.}

\title{Mean dimension theory for infinite dimensional Bedford-McMullen sponges}
\date{\today}
\maketitle
 \begin{abstract}
    Tsukamoto (2022)
    introduced the notion of Bedford-McMullen carpet system, a subsystem of $([0,1]^{\N}\times[0,1]^{\N},\shift)$ whose metric mean dimension and mean Hausdorff dimension does not coincide in general. The aim of this paper is to develop the mean dimension theory for Bedford-McMullen sponge system, which is a subsystem of $(([0,1]^r)^{\N},\shift)$ with arbitrary $3\leq r\in\N$. In particular, we compute the metric mean dimension and mean Hausdorff dimension of such topological dynamical systems explicitly, extending the results by Tsukamoto. The metric mean dimension is a weighted combination of the standard topological entropy, whereas the mean Hausdorff dimension is expressed in terms of weighted topological entropy. We also exhibit a special situation for which the metric mean dimension and the mean Hausdorff dimension of a sponge system coincide.
 \end{abstract}

\section{Introduction}
\subsection{Background}
The well-known Bedford-McMullen carpets are a family of planar self-affine sets where all affine maps have the same linear part, corresponding to diagonal matrices. Up to now, the dimension theory for these carpets has been well studied. Bedford \cite{Bed84} and McMullen \cite{Mc84} computed the Hausdorff and Minkowski dimensions of these carpets. Moreover, McMullen pointed out that each carpet supports a unique ergodic measure with full Hausdorff dimension. The formulae for Assouad and lower dimensions of these carpets were established by Mackay \cite{Mac11} and Fraser \cite{Fra14} respectively. Perhaps the Bedford-McMullen family are the simplest self-affine sets. More general self-affine carpets, which allow the diagonal matrices to be non-constant, were considered by Lalley-Gatzouras \cite{GL92} and Bara\'{n}ski \cite{Bar07}. Kenyon and Peres \cite{KP96} extended the Bedford-McMullen carpets to high-dimensional counterparts, which they denoted as sponges. 
They also exhibited that the Minkowski and Hausdorff dimensions of a Bedford-McMullen sponge are distinct unless the uniform fibres case. Such phenomenon does not occur for self-similar sets studied by Falconer \cite{Fal89}. It is worth mentioning that Das and Simmons \cite{DS17} constructed a self-affine sponge whose Hausdorff dimension is strictly bigger than the Hausdorff dimension of invariant probability measures supported on the sponge, i.e. there always exists a dimension gap, which solved a long-standing open problem in the literature negatively.

Motivated by the variational principle for Bedford-McMullen family (including carpets and sponges) in \cite{KP96}, i.e. the supremum of Hausdorff dimension of its invariant probability measures equals to its Hausdorff dimension, Feng and Huang \cite{FH16} introduced the notion of \textit{weighted topological entropy}. 
This quantity, denoted by $\htop^{\mathbf{a}}(\pi,T)$, is concerned with a factor map\footnote{A \textbf{factor map} is a continuous surjection $\pi: (X,T)\to(Y,S)$ satisfying $\pi\circ T=S\circ\pi$ (equivariance).} between two topological dynamical systems $(X,T)$ and $(Y,S)$, and a pair $\mathbf{a}=(a_1,a_2)\in(0,\infty)\times[0,\infty)$ of weights. The idea is to replace Bowen balls that appear in the definition of topological entropy (\cite{Bow73}), by \textit{weighted Bowen balls} $B^{\mathbf{a}}_n(x,\varepsilon)$, which is the collection of points $y\in X$ satisfying the $T$-orbits of $x,y$ remain $\varepsilon$-close for $\lceil a_1n \rceil$ iterates, and the $S$-orbits of $\pi(x),\pi(y)$ remain $\varepsilon$-close for $\lceil (a_1+a_2)n \rceil$ iterates.
Nowadays, there are plenty of extending work along this line. For example, \cite{SXZ20} developed the variational principle for weighted topological entropy of $\R$-flows on non-compacts. Huo and Yuan \cite{HY23} generalized the work of Feng and Huang to weighted topological entropy and pressure of $\Z^d$-actions. Yin \cite{Yin24} established a relative version of the weighted variational principle, which can also be viewed as a weighted version of the relativized variational principle by Ledrappier and Walters in \cite{LW77}. Tsukamoto \cite{Tsu23} developed an alternative approach to weighted topological entropy, denoted by $\htop^{w}(\pi,T)$, and proved the coincidence of $\htop^{(w,1-w)}(\pi,T)$ and $\htop^{w}(\pi,T)$ by establishing variational principles for the quantity $\htop^{w}(\pi,T)$. As an application, the equality between these two quantities enables one to compute the formula for the Hausdorff dimension of Bedford-McMullen carpets. Tsukamoto's method was further extended to a sequence of topological dynamical systems of arbitrary length by Alibabaei \cite{Ali24}. We will not explain Tsukamoto's original idea tailored to the planar self-affine carpets. Instead, we review Alibabaei's definition in Section \ref{defn:Weighted topological entropy} since in this paper we focus on self-affine sponges, the higher-dimensional analogue of the carpets.

In his celebrated paper \cite{Gro99}, Gromov introduced the notion of \textit{mean dimension}, a new topological invariant which measures the \textit{number of parameters per unit of time required to describe a point in a system}, compared with topological entropy which measures the \textit{required number of bits per unit of time}. This invariant was systematically explored by Lindenstrauss and Weiss for discrete amenable group actions in \cite{Lin99,LW00}.
In particular, it was shown in \cite{LW00} that an upper bound of the mean dimension should be achieved by quantifying how fast the amount of entropy detected at a given resolution. This quantity describing the rate of growth is named as the \textit{metric mean dimension}. Mean dimension in topological dynamics is surprisingly connected to \textit{rate distortion dimension} (\cite{KD94}) in information theory via a \textit{double variational principle} established by Lindenstrauss and Tsukamoto in \cite{LT19}. More precisely, they introduced the notion of \textit{mean Hausdorff dimension}, which serves simultaneously as a finer upper bound of the mean dimension compared with the metric mean dimension and a lower bound of the rate distortion dimension over all invariant measures of the system. We review the definitions of metric mean dimension and mean Hausdorff dimension in Section \ref{defn:mean dim theory}.

\subsection{Infinite dimensional sponges and main results}\label{defn:sponge system}
In \cite{Tsu22}, Tsukamoto introduced a dynamical counterpart of some planar self-affine sets, named as \textit{carpet systems}, and investigated the metric mean dimension and mean Hausdorff dimension of such carpet systems.
In this paper, we generalize the results of Tsukamoto to analogous \textit{sponge systems}, which are subsystems of $(([0,1]^r)^{\N},\shift)$ with arbitrary $3\leq r\in\N$.

Fix $r\in\N$ and let $2\leq m_1\leq \ldots \leq m_r$ be integers. Set $\mathcal{I}_l=\{0,\ldots,m_l-1\}$ for $l=1,\ldots,r$. Let $(\prod_{l=1}^{r}\mathcal{I}_l)^{\N}$ be the one-sided full shift defined on the alphabet $\prod_{l=1}^{r}\mathcal{I}_l$ equipped with the shift transformation $\sigma:(\prod_{l=1}^{r}\mathcal{I}_l)^{\N}\to (\prod_{l=1}^{r}\mathcal{I}_l)^{\N}$. By abuse of notation, we also denote by $\sigma:(\prod_{l=1}^{i}\mathcal{I}_l)^{\N}\to (\prod_{l=1}^{i}\mathcal{I}_l)^{\N}$ the shift map on $(\prod_{l=1}^{i}\mathcal{I}_l)^{\N}\to (\prod_{l=1}^{i}\mathcal{I}_l)^{\N}$. For $i=1,\ldots,r-1$, we define $p_i:\prod_{l=1}^{r}\mathcal{I}_l\to \prod_{l=1}^{i}\mathcal{I}_l$ to be the projection onto the first $i$ coordinates, i.e.
\begin{equation*}
    p_i(x_1,\ldots,x_r)=(x_1,\ldots,x_i).
\end{equation*}
Let $\pi_i:(\prod_{l=1}^{r}\mathcal{I}_l)^{\N}\to (\prod_{l=1}^{i}\mathcal{I}_l)^{\N}$ be the countable product map of $p_i$ defined by
\begin{equation}\label{projection}
    \pi_i(\mathbf{y}_1,\mathbf{y}_2,\ldots)=(p_i(\mathbf{y}_1),p_i(\mathbf{y}_2),\ldots),
\end{equation}
where $\mathbf{y}_k\in\prod_{l=1}^{r}\mathcal{I}_l$ for each $k\in\N$. 

Let $[0,1]^{\N}$ be the Hilbert cube. Consider the product $\underbrace{[0,1]^{\N}\times\cdots\times [0,1]^{\N}}_{r \text{ times}}$ equipped with a metric by
\begin{equation*}
    d((\mathbf{x}_1,\ldots,\mathbf{x}_r),(\mathbf{y}_1,\ldots,\mathbf{y}_r))=\sum\limits_{k=1}^{\infty}2^{-k}\max\{|\mathbf{x}_{kl}-\mathbf{y}_{kl}|:1\leq l\leq r\}
\end{equation*}
for $(\mathbf{x}_1,\ldots,\mathbf{x}_r)=((\mathbf{x}_{k1})_{k\in\N},\ldots,(\mathbf{x}_{kr})_{k\in\N})$ and $(\mathbf{y}_1,\ldots,\mathbf{y}_r)=((\mathbf{y}_{k1})_{k\in\N},\ldots,(\mathbf{y}_{kr})_{k\in\N})$. We define a shift map $\sigma:([0,1]^{\N})^r\to ([0,1]^{\N})^r$ by
\begin{equation*}
    \sigma((\mathbf{x}_{k1})_{k\in\N},\ldots,(\mathbf{x}_{kr})_{k\in\N})=((\mathbf{x}_{k+1,1})_{k\in\N},\ldots,(\mathbf{x}_{k+1,r})_{k\in\N}).
\end{equation*}

Let $\Omega\subset(\prod_{l=1}^{r}\mathcal{I}_l)^{\N}$ be a \textbf{subshift}, i.e. a closed subset with $\sigma(\Omega)\subset\Omega$. We always assume that $\Omega$ is composed of at least two elements. We define a \textbf{sponge system} $X_{\Omega}\subset([0,1]^{\N})^r$ by
\begin{equation*}
    X_{\Omega}=\left\{\left(\sum\limits_{k=1}^{\infty} \frac{\mathbf{x}_{k1}}{m_1^k},\ldots,\sum\limits_{k=1}^{\infty}\frac{\mathbf{x}_{kr}}{m_r^k}\right)\in([0,1]^{\N})^r: (\mathbf{x}_{k1},\ldots,\mathbf{x}_{kr})\in\Omega, \forall k\in\N\right\}.
\end{equation*}
Here for $\mathbf{x}_{k1}\in\mathcal{I}_1^{\N}\subset\ell^{\infty}$, we consider the summation $\sum_{k=1}^{\infty}\frac{\mathbf{x}_{k1}}{m_1^k}$ in $\ell^{\infty}$. Then $\sum_{k=1}^{\infty}\frac{\mathbf{x}_{k1}}{m_1^k}\in[0,1]^{\N}$. The same idea is applied to $\sum_{k=1}^{\infty}\frac{\mathbf{x}_{ki}}{m_i^k}, i=2,\ldots,r$.
In this way we obtain a $\sigma$-invariant subset $X_{\Omega}$.
Then $(X_{\Omega},\sigma)$ is a subsystem of $(([0,1]^{\N})^r,\sigma)$. Recall that the projection map $\pi_i$ is defined by \eqref{projection}. We consider its restriction to $\Omega$, also denoted by $\pi_i$, then $\pi_i(\Omega)$ is a subshift of $(\prod_{l=1}^{i}\mathcal{I}_l)^{\N}$ for each $1\leq i\leq r-1$. For convenience, sometimes we use $\pi_r(\Omega)$ to denote $\Omega$.

For $1\leq i\leq r-1$, define the factor maps $\tau_i:\pi_{r-i+1}(\Omega)\to\pi_{r-i}(\Omega)$ by
\begin{equation*}
    \tau_i(\mathbf{x}_1,\ldots,\mathbf{x}_{r-i+1})=(\mathbf{x}_1,\ldots,\mathbf{x}_{r-i}).
\end{equation*}
Notice that $\pi_i(\Omega)=\tau_{r-i}\circ \tau_{r-i-1}\circ\cdots\circ\tau_1(\Omega)$ for each $1\leq i\leq r-1$.

We are now prepared to state the main result of this paper, which can be viewed as the mean dimension version of Theorem 1.2 and Proposition 1.3 (i) in \cite{KP96}.
\begin{thm}\label{mdim of sponge system}
    Let $(X_{\Omega},\sigma,d)$ be a sponge system described as above. Set $\mathbf{a}=(a_1,\ldots,a_{r-1})$ with $a_i=\log m_{r-i}/\log m_{r-i+1}$ for each $1\leq i\leq r-1$. Then the metric mean dimension and the mean Hausdorff dimension of $(X_{\Omega},\sigma,d)$ are given by
    \begin{equation}\label{mmdim of sponge system}
        \mmdim(X_{\Omega},\sigma,d)=\frac{\htop(\Omega,\sigma)}{\log m_r}+\sum\limits_{i=1}^{r-1}\left(\frac{1}{\log m_{i}}-\frac{1}{\log m_{i+1}}\right)\htop(\pi_i(\Omega),\sigma)
    \end{equation}
    and
    \begin{equation}\label{mhdim of sponge system}
        \mhdim(X_{\Omega},\sigma,d)=\frac{\htop^{\mathbf{a}}(\{(\pi_i(\Omega),\sigma)\}_{i=1}^r,\{\tau_i\}_{i=1}^{r-1})}{\log m_1}.
    \end{equation}
    respectively, where $\htop(\pi_i(\Omega),\sigma)$ denotes the topological entropy of $(\pi_i(\Omega),\sigma)$ for each $1\leq i\leq r$ and $\htop^{\mathbf{a}}(\{(\pi_i(\Omega),\sigma)\}_{i=1}^r,\{\tau_i\}_{i=1}^{r-1})$ denotes the weighted topological entropy of $\mathbf{a}$-exponent with respect to factor maps $\tau_i:(\pi_{r-i+1}(\Omega),\sigma)\to(\pi_{r-i}(\Omega),\sigma), 1\leq i\leq r-1$.
\end{thm}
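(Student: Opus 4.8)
The plan is to compute both invariants directly from the definitions recalled in Section~\ref{defn:mean dim theory}, by analysing $X_{\Omega}$ with respect to the dynamical metrics $d_n(x,y)=\max_{0\le j<n}d(\sigma^{j}x,\sigma^{j}y)$. The common starting point is a coding lemma. A point of $X_{\Omega}$ is described by a digit array $\big(\mathbf{x}_{kl}[j]\big)$, with $l\in\{1,\dots,r\}$ labelling the coordinate, $k\in\N$ the base-$m_l$ digit depth and $j\in\N$ the coordinate shifted by $\sigma$, subject only to the requirement that for every fixed $k$ the word $j\mapsto(\mathbf{x}_{k1}[j],\dots,\mathbf{x}_{kr}[j])$ belongs to $\Omega$; the $l$-th coordinate of the point is the sequence $j\mapsto\sum_{k}\mathbf{x}_{kl}[j]\,m_l^{-k}$, and $\sigma$ shifts $j$. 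Hence $d_n(x,y)\le\varepsilon$ is, up to replacing $\varepsilon$ by a fixed multiple of it, equivalent to requiring that for $j=1,\dots,n$ and each $l$ the reals $\sum_{k}\mathbf{x}_{kl}[j]m_l^{-k}$ and $\sum_{k}\mathbf{y}_{kl}[j]m_l^{-k}$ share their first $q_l:=\lceil\log(1/\varepsilon)/\log m_l\rceil$ base-$m_l$ digits (the weak coupling of the tail coordinates $j>n$ constrains only $\mathrm O\big((\log(1/\varepsilon))^{2}\big)$ further digits and is harmless after dividing by $n$). Since $m_1\le\dots\le m_r$ we have $q_1\ge\dots\ge q_r$, so at digit depth $k$ exactly the first $i(k)$ coordinates are resolved, where $i(k)=i$ for $q_{i+1}<k\le q_i$ (with $q_{r+1}:=0$) and $i(k)=0$ for $k>q_1$; and — the feature making the carpet/sponge phenomenon possible — since $\Omega$ constrains only the $j$-direction, the resolved digits at different depths $k$ obey independent constraints.

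For the metric mean dimension it follows that a near-optimal $\varepsilon$-cover of $(X_{\Omega},d_n)$ is produced by choosing, independently at each depth $1\le k\le q_1$, an admissible length-$n$ word of the projected subshift $\pi_{i(k)}(\Omega)$. Writing $\#\mL_n(Y)$ for the number of length-$n$ words of a subshift $Y$, so that $\tfrac1n\log\#\mL_n(Y)\to\htop(Y,\sigma)$ by submultiplicativity, one gets
\begin{equation*}
\log N(X_{\Omega},d_n,\varepsilon)=\sum_{i=1}^{r}(q_i-q_{i+1})\log\#\mL_n\big(\pi_i(\Omega)\big)+\mathrm O(n),
\end{equation*}
where $N(X_{\Omega},d_n,\varepsilon)$ is the $\varepsilon$-covering number and the $\mathrm O(n)$ absorbs the boundary rounding of the $q_i$, the tail digits, and the usual discrepancy between covering and separated sets. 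Feeding this into the definition of metric mean dimension and using $q_i=\log(1/\varepsilon)/\log m_i+\mathrm O(1)$ — so that the error, divided by $n$ and then by $\log(1/\varepsilon)$, disappears — yields exactly \eqref{mmdim of sponge system}.

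For the mean Hausdorff dimension the uniform-scale covers above are no longer efficient: one must cover $(X_{\Omega},d_n)$ by \emph{approximate cubes}, i.e. by sets on which coordinate $l$ is resolved to a coordinate-dependent depth $p_l$ with $p_1\ge\dots\ge p_r$, such a set having $d_n$-diameter $\asymp\max_l m_l^{-p_l}$ and, by the same layer-by-layer counting, multiplicity $\asymp\prod_{i=1}^{r}\#\mL_n\big(\pi_i(\Omega)\big)^{\,p_i-p_{i+1}}$. Minimising the Hausdorff sum $\sum(\mathrm{diam})^{s}$ over all admissible profiles $(p_l)$ and stopping rules is, after the normalisation $1/\log m_1$ and the change of variables that converts the successive ratios of the $\log m_l$ into the weights $\mathbf a$ of the statement, precisely the combinatorial content of the definition (Section~\ref{defn:Weighted topological entropy}) of the weighted topological entropy of the tower $\Omega=\pi_r(\Omega)\xrightarrow{\tau_1}\pi_{r-1}(\Omega)\xrightarrow{\tau_2}\dots\xrightarrow{\tau_{r-1}}\pi_1(\Omega)$. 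Carrying this out — following the strategy of Tsukamoto \cite{Tsu22} for $r=2$, but using Alibabaei's \cite{Ali24} extension of the weighted entropy to towers of arbitrary length — gives the upper bound $\limsup_{n\to\infty}\tfrac1n\dim_{\mathrm H}(X_{\Omega},d_n,\varepsilon)\le\frac1{\log m_1}\htop^{\mathbf a}\big(\{(\pi_i(\Omega),\sigma)\}_{i=1}^{r},\{\tau_i\}_{i=1}^{r-1}\big)+o_{\varepsilon}(1)$.

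For the matching lower bound I would appeal to the variational principle for $\htop^{\mathbf a}$ recalled in Section~\ref{defn:Weighted topological entropy}: take a $\sigma$-invariant measure $\mu$ on $\Omega$ almost attaining the weighted measure-theoretic entropy, push it forward to a measure $\nu$ on $X_{\Omega}$ carried by digit arrays whose $j$-words are $\mu$-typical at every depth, and invoke the mass distribution principle — using the Shannon–McMillan–Breiman theorem for $\mu$ and for its pushforwards $(\tau_j)_*\cdots(\tau_1)_*\mu$ down the tower to estimate the $\nu$-mass of approximate cubes, hence to bound below the lower local dimension of $\nu$ in $d_n$ at scale $\varepsilon$. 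I expect this lower bound to be the main obstacle: one must tune the weights built into $\mu$ so that the Frostman exponents at all the relevant scales $m_l^{-p_l}$ are simultaneously correct, and then interchange the limits $n\to\infty$ and $\varepsilon\to0$ so that the $o_{\varepsilon}(1)$ losses in the entropy estimates vanish — this is exactly where Alibabaei's recursive description of $\htop^{\mathbf a}$ and the explicit form $a_i=\log m_{r-i}/\log m_{r-i+1}$ of the weights enter. Finally the general inequality $\mhdim(X_{\Omega},\sigma,d)\le\mmdim(X_{\Omega},\sigma,d)$ provides a consistency check between \eqref{mhdim of sponge system} and \eqref{mmdim of sponge system}, equality holding precisely when $\htop^{\mathbf a}$ degenerates to the unweighted combination — the coincidence phenomenon alluded to above.
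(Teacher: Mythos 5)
Your computation of the metric mean dimension is essentially the paper's: the paper formalizes your layer-by-layer count through the approximate cubes $Q_{N,M}$ and the depths $L_i(M)=\lfloor M\log m_1/\log m_i\rfloor$, with the only step you wave at (the matching separated set) handled by an explicit construction using sections $\mathbf{s}_i$ of the projections. For the mean Hausdorff lower bound you take a genuinely different route: the paper never invokes the variational principle or Shannon--McMillan--Breiman. Instead it first proves a purely combinatorial formula $\htop^{\mathbf a}=\lim_N\frac1N\log Z_N$ for the weighted entropy of the tower of subshifts (Lemma \ref{Formula for the weighted topological entropy}), builds from the recursive quantities $Z_N(\mathbf v)$ an explicit product measure $\mu_N=f_N^{\otimes\N}$ on $(\Omega\vert_N)^{\N}$, and controls $\mu_N$ of approximate cubes on a set of measure $\ge\frac12$ by an elementary maximal inequality (Lemmas \ref{Lemma5.9(1)}--\ref{Lemma5.9}). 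Your SMB-based alternative is viable and conceptually cleaner, but note that for fixed $N$ the digit layers $k$ are i.i.d.\ samples of length-$N$ $\mu$-words, so you still need a quantitative law of large numbers across depths to make ``most layers simultaneously typical'' precise -- exactly the role the paper's probabilistic lemma plays -- and you still need the bounded-multiplicity/separation statement (Lemma \ref{Lemma5.13}) to pass from approximate cubes to arbitrary covers in the mass-distribution step.

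The genuine gap is the upper bound for the mean Hausdorff dimension. ``Minimising the Hausdorff sum over all admissible profiles and stopping rules'' is not an argument: a single profile $(p_l)$ applied uniformly to all points only reproduces the metric mean dimension. The mechanism that makes $\mhdim$ drop below $\mmdim$ is that the covering scale must be chosen \emph{point by point}: for each digit array one applies the Kenyon--Peres combinatorial lemma (Lemma \ref{KP96,Lemma4.1}) to the partial sums $u_i(L)=\sum_{k\le L}\log Z_N(\mathbf{x}_{k1},\ldots,\mathbf{x}_{ki})/N$ to produce a level $M$ in a fixed window $[K,L]$ at which the approximate cube containing the point satisfies $\mu_N(P_{N,M})\ge(\diam Q_{N,M})^{\,\log_{m_1}Z_N+\varepsilon N}$; one then needs the geometric-measure-theory criterion of Lemma \ref{Hausdorff dim estimate}, which converts ``every point lies in a set of small diameter and large measure'' into $\hdim\le(1+c)s$. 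Neither ingredient appears in your sketch, and without them the upper bound does not close; deferring to ``Tsukamoto's strategy'' is precisely deferring to these two lemmas.
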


The proof of Theorem \ref{mdim of sponge system} was inspired by \cite{Tsu22,KP96}. The notion of \textit{approximate cubes} plays a prominent role in the computation of the metric mean dimension. For the mean Hausdorff dimension, we express it in terms of the weighted topological entropy for a sequence of topological dynamical systems, we use a combinatorial lemma (see Lemma \ref{KP96,Lemma4.1}) by Keyon and Peres \cite{KP96}, and we also use some tools from probability theory (see Lemma \ref{Lemma5.9}) and geometric measure theory (see Lemma \ref{Hausdorff dim estimate}).


\subsection{Notation and organization of the paper.}
    In Section \ref{Preliminaries}, we recall the definitions of Minkowski dimension, Hausdorff dimension and their dynamical counterparts, and Alibabei's approach to define weighted topological entropy for higher dimensions. 
    Section \ref{Proofs} is devoted to the results about sponge systems including the proof of Theorem \ref{mdim of sponge system}. We also discuss a special situation in Lemma \ref{coincidence} for which the metric mean dimension and the mean Hausdorff dimension of a sponge system coincide.
    
    In this paper, we denote the set of natural numbers by $\N$. For a random variable $\eta$, denote its expectation and variance by $\mathbb{E}(\eta)$ and $\mathbb{V}(\eta):=\mathbb{E}((\eta-\mathbb{E}(\eta))^2)$ respectively. By $\diam U$ we denote the diameter of a set $U$. $|U|$ is the cardinality of a set $U$. We use boldface to denote vectors and subscript to denote each coordinate of a vector. For vectors $\mathbf{x,y}\in[0,1]^n$ with $\mathbf{x}=(x_1,\ldots,x_n),\mathbf{y}=(y_1,\ldots,y_n)$, the $\ell^{\infty}$-distance is defined by 
    \begin{equation*}
        \|\mathbf{x-y}\|_{\infty}=\max_{1\leq k\leq n}|x_k-y_k|.
    \end{equation*}

\section{Preliminaries}\label{Preliminaries}
We split this section into three pieces, where we review basic definitions of fractal dimensions, mean dimensions, and weighted topological entropy respectively.
\subsection{Fractal dimensions}
Let $(X,d)$ be a compact metrizable space. For $\varepsilon>0$, we define the $\varepsilon$-\textbf{covering number} $\#(X,d,\varepsilon)$ as the minimum $n\geq 1$ such that there exists an open cover $\{U_1,\ldots,U_n\}$ of $X$ satisfying $\diam U_i<\varepsilon$ for each $1\leq i\leq n$.
We define the \textbf{upper and lower Minkowski dimensions} (also called \textbf{box-counting dimensions}) of $(X,d)$ by
\begin{equation*}
    \udim(X,d)=\limsup\limits_{\varepsilon\to0}\bdim(X,d,\varepsilon), \ldim(X,d)=\liminf\limits_{\varepsilon\to0}\bdim(X,d,\varepsilon),
\end{equation*}
where 
\begin{equation*}
    \bdim(X,d,\varepsilon)=\frac{\log\#(X,d,\varepsilon)}{\log(1/\varepsilon)}
\end{equation*}
denotes the $\varepsilon$-\textbf{scale Minkowski dimension} of $(X,d)$. If these two values coincide, the common value is called the \textbf{Minkowski dimension} of $(X,d)$. Remark that if we define the $\varepsilon$-\textbf{separating number} $\#_{\sep}(X,d,\varepsilon)$ as the maximum $n\geq 1$ such that there exist $x_1,\ldots,x_n\in X$ satisfying $d(x_i,x_j)\geq\varepsilon$ for all distinct $i,j$, then we can replace the covering numbers in the definition of Minkowski dimension with separating numbers since
\begin{equation*}
    \#_{\sep}(X,d,\varepsilon)\leq \#(X,d,\varepsilon) \leq \#_{\sep}(X,d,\varepsilon/4).
\end{equation*}

For $s\geq 0$ and $\varepsilon>0$, we define $\mathcal{H}_{\varepsilon}^s(X,d)$ by
\begin{equation*}
    \mathcal{H}_{\varepsilon}^s(X,d)=\inf\left\{ \sum_{i=1}^{\infty}(\diam E_i)^s: X=\bigcup_{i=1}^{\infty} E_i \text{ with }  \diam E_i<\varepsilon, \forall i\geq 1\right\}.
\end{equation*}
We use the convention that $(\diam\emptyset)^s=0$ for all $s\geq 0$ and $0^0=1$. We define the \textbf{Hausdorff dimension} of $(X,d)$ by
\begin{equation*}
    \hdim(X,d)=\lim_{\varepsilon\to0} \hdim(X,d,\varepsilon),
\end{equation*}
where $\hdim(X,d,\varepsilon)=\sup\{s\geq 0: \mathcal{H}_{\varepsilon}^s(X,d)\geq 1\}$ denotes the $\varepsilon$-\textbf{scale Hausdorff dimension} of $(X,d)$.

\subsection{Mean dimension theory}\label{defn:mean dim theory}
By a \textbf{topological dynamical system} (\textit{TDS} for short) we understand as a pair $(X,T)$ where $X$ is a compact metrizable space and $T:X\to X$ is a homeomorphism. 
Let $(X,T)$ be a \textit{TDS} equipped with a metric $d$ on $X$. For each $N\in\N$ we define the Bowen metric $d_N$ on $X$ by
\begin{equation*}
    d_N(x,y)=\max_{0\leq n<N}d(T^nx,T^ny).
\end{equation*}
The \textbf{topological entropy} of $(X,T)$ is defined by
\begin{equation*}
    \htop(X,T)=\lim_{\varepsilon\to0}\left(\lim_{N\to\infty}\frac{\log \#(X,d_N,\varepsilon)}{N}\right).
\end{equation*}
The \textbf{upper and lower metric mean dimensions} of $(X,T,d)$ are defined by
\begin{equation*}
    \begin{split}
        &\umbdim(X,T,d)=\limsup_{\varepsilon\to0}\left(\lim_{N\to\infty}\frac{\bdim(X,d_N,\varepsilon)}{N}\right),\\
        &\lmbdim(X,T,d)=\liminf_{\varepsilon\to0}\left(\lim_{N\to\infty}\frac{\bdim(X,d_N,\varepsilon)}{N}\right).
    \end{split}
\end{equation*}
The limit over $N$ exists since $\log\#(X,d_N,\varepsilon)$ is sub-additive in $N$. If these two values coincide, the common value is called the \textbf{metric mean dimension} of $(X,T,d)$ and denoted by $\mmdim(X,T,d)$. We define the \textbf{upper and lower mean Hausdorff dimensions} of $(X,T,d)$ by
\begin{equation*}
    \begin{split}
        &\umhdim(X,T,d)=\lim_{\varepsilon\to0}\left(\limsup_{N\to\infty}\frac{\hdim(X,d_N,\varepsilon)}{N}\right),\\
        &\lmhdim(X,T,d)=\lim_{\varepsilon\to0}\left(\liminf_{N\to\infty}\frac{\hdim(X,d_N,\varepsilon)}{N}\right).
    \end{split}
\end{equation*}
If these two values coincide, the common value is called the \textbf{mean Hausdorff dimension} of $(X,T,d)$ and denoted by $\mhdim(X,T,d)$. 
Mean Hausdorff dimension, upper and lower metric mean dimension are invariants for bi-Lipschitz isomorphisms, and they are all metric-dependent.
Just as Hausdorff dimension is never bigger than the Minkowski dimension, one has the following relationship (cf. \cite[Proposition 2.1]{Tsu22})
\begin{equation*}
    \lmhdim(X,T,d)\leq  \umhdim(X,T,d)\leq \lmbdim(X,T,d)\leq \umbdim(X,T,d).
\end{equation*}

For example, if $X=([0,1]^r)^{\Z}$ is equipped with a metric defined by
\begin{equation}\label{two-sided distance}
    d((\mathbf{x}_1,\ldots,\mathbf{x}_r),(\mathbf{y}_1,\ldots,\mathbf{y}_r))=\sum\limits_{k\in\Z}2^{-|k|}\max\{|\mathbf{x}_{kl}-\mathbf{y}_{kl}|:1\leq l\leq r\}
\end{equation}
for $(\mathbf{x}_1,\ldots,\mathbf{x}_r)=((\mathbf{x}_{k1})_{k\in\Z},\ldots,(\mathbf{x}_{kr})_{k\in\Z})$ and $(\mathbf{y}_1,\ldots,\mathbf{y}_r)=((\mathbf{y}_{k1})_{k\in\Z},\ldots,(\mathbf{y}_{kr})_{k\in\Z})$, and $\sigma: ([0,1]^r)^{\Z}\to([0,1]^r)^{\Z}$ is the (left) shift transformation, then $\mmdim(([0,1]^r)^{\Z},\sigma,d)=\mhdim(([0,1]^r)^{\Z},\sigma,d)=r$. For another example, fix $\lambda>0$ and let $F_{\lambda}=\{0\}\cup\{\frac{1}{n^{\lambda}} \vert n\in\N\}=\{0,1,\frac{1}{2^{\lambda}},\frac{1}{3^{\lambda}},\ldots\}$. Let $F_{\lambda}^{\Z}$ be the full shift on $F_{\lambda}$ endowed with a metric as defined in \eqref{two-sided distance} with $r=1$, then $\mhdim(F_{\lambda}^{\Z},\sigma,d)=\hdim F_{\lambda}=0$ and $\mmdim(F_{\lambda}^{\Z},\sigma,d)=\bdim F_{\lambda}=\frac{1}{1+\lambda}$. For the proof, we refer interested readers to \cite[Appendix A]{Tsu22}, \cite[Section II.E]{LT18} and \cite[Proposition 1]{VV17}.



\subsection{Weighted topological entropy}\label{defn:Weighted topological entropy}
We follow the notation set up by Alibabaei in \cite{Ali24}.
Consider a sequence of \textit{TDS}s $(X_i,T_i),i=1,\ldots,r$ and factor maps $\tau_i:X_i\to X_{i+1},i=1,\ldots,r-1$. 
\begin{equation*}
	\begin{tikzcd}
		(X_1,T_1) \arrow{r}{\tau_{1}} \arrow[bend left=30]{r}{\pi_{r-1}} \arrow[bend left=47]{rr}{\pi_{r-i+1}} \arrow[bend left=52]{rrr}{\pi_1} & (X_2,T_2) \arrow{r}{\tau_2}  & \cdots (X_i,T_i) \cdots \arrow{r}{\tau_{r-1}} &  (X_r,T_r)
	\end{tikzcd}
\end{equation*}
Equip $X_i$ with a metric $d^{(i)}$.
Let $\mathbf{a}=(a_1,a_2,\ldots,a_{r-1})$ with $0\leq a_i\leq 1$ for each $i$. For $N\in\N$, we define the Bowen metric $d_N^{(i)}$
on $X_i$ by
\begin{equation*}
    d_N^{(i)}(x,y)=\max_{0\leq n<N}d^{(i)}(T_i^nx,T_i^ny).
\end{equation*}
We inductively define a quantity $\#^{\mathbf{a}}(\Omega,N,\varepsilon)$ for $\Omega\subset X_i$. For $\Omega\subset X_i$, set
\begin{equation*}
    \#_1^{\mathbf{a}}(\Omega,N,\varepsilon)=\min \left\{\begin{array}{c|c}
n\in\N & \begin{array}{c}
 \text { There exists an open cover } \{U_j\}_{j=1}^n \text { of } \Omega  \\
 \text { with } \diam(U_j, d_N^{(1)})<\varepsilon \text { for all } 1 \leq j \leq n
\end{array}
\end{array}\right\} .
\end{equation*}
Let $\Omega\subset X_{i+1}$. Suppose that $\#_i^{\mathbf{a}}$ is already defined. We set
\begin{equation*}
    \#_{i+1}^{\mathbf{a}}(\Omega,N,\varepsilon)=\min \left\{\begin{array}{c|c}
\sum\limits_{j=1}^n(\#_{i}^{\mathbf{a}}(\tau_i^{-1}(U_j),N,\varepsilon))^{a_i} & \begin{array}{c}
 n\in\N, \{U_j\}_{j=1}^n \text { is an open cover of } \Omega  \\
 \text { with } \diam(U_j, d_N^{(i+1)})<\varepsilon \text { for all } 1 \leq j \leq n
\end{array}
\end{array}\right\} .
\end{equation*}
We denote the \textbf{weighted topological entropy of} $\mathbf{a}$-\textbf{exponent} by $\htop^{\mathbf{a}}(\{(X_i,T_i)\}_{i=1}^r,\{\tau_i\}_{i=1}^{r-1})$ and define it by
\begin{equation*}
    \htop^{\mathbf{a}}(\{(X_i,T_i)\}_{i=1}^r,\{\tau_i\}_{i=1}^{r-1})=\lim_{\varepsilon\to0}\left(\lim_{N\to\infty}\frac{\log\#_r^{\mathbf{a}}(X_r,N,\varepsilon)}{N}\right).
\end{equation*}
The limit exists since $\log\#_r^{\mathbf{a}}(X_r,N,\varepsilon)$ is sub-additive in $N$ and non-decreasing as $\varepsilon$ tends to $0$. Note that in the above definition we can use closed covers instead of open covers, which does not change the value.

We refer interested readers to \cite{KP96a,Ali24a} for the calculation of the weighted topological entropy of some sofic subshifts, which are defined from graph-directed self-affine sets.

From $\mathbf{a}=(a_1,\ldots,a_{r-1})\in\R^{r-1}$, we define a probability vector $\mathbf{w_a}=(w_1,\ldots,w_r)\in\R^r$ by
\begin{equation}\label{wa}
    \left\{
\begin{array}{l}
w_1=a_1a_2a_3\cdots a_{r-1}, \\
w_2=(1-a_1)a_2a_3\cdots a_{r-1}, \\
w_3=(1-a_2)a_3\cdots a_{r-1},\\
\vdots\\
w_{r-1}=(1-a_{r-2})a_{r-1},\\
w_r=1-a_{r-1}.
\end{array}
\right.
\end{equation}
The weighted topological entropy satisfies the following variational principle, see \cite[Theorem 1.2]{Ali24},
\begin{equation}\label{variational principle}
    \htop^{\mathbf{a}}(\{(X_i,T_i)\}_{i=1}^r,\{\tau_i\}_{i=1}^{r-1})=\sup_{\mu\in\mathcal{M}^{T_1}(X_1)}\left(\sum\limits_{i=1}^{r}w_i h_{(\pi_{r-i+1})_*\mu}(T_i)\right),
\end{equation}
where $\pi_i$ is defined by
\begin{equation*}
    \pi_r=\id_{X_1}: X_1\to X_1,  
\end{equation*}
\begin{equation*}
    \pi_i=\tau_{r-i}\circ\tau_{r-i-1}\circ\cdots\circ\tau_1 : X_1\to X_{r-i+1}, 1\leq i\leq r-1,
\end{equation*}
$\mathcal{M}^{T_1}(X_1)$ is the set of $T_1$-invariant Borel probability measures on $X_1$, $(\pi_{r-i+1})_*\mu$ is the push-forward measure of $\mu$ by $\pi_{r-i+1}$ on $X_i$, and $h_{(\pi_{r-i+1})_*\mu}(T_i)$ is the Kolmogorov-Sinai entropy of the measure preserving system $(X_i,T_i,(\pi_{r-i+1})_*\mu)$ for each $i$.

\begin{rem}
    Given $\mathbf{a}=\left(\log m_{r-1}/\log m_r,\log m_{r-2}/\log m_{r-1},\ldots,\log m_1/\log m_2\right)$, the probability vector $\mathbf{w_a}$ defined by \eqref{wa} equals
    \begin{equation*}
        \left( \frac{\log m_1}{\log m_r}, \frac{\log m_1}{\log m_{r-1}}-\frac{\log m_1}{\log m_r},\ldots, \frac{\log m_1}{\log m_2}-\frac{\log m_1}{\log m_3}, 1-\frac{\log m_1}{\log m_2} \right).
    \end{equation*}
    Combining formulae \eqref{mhdim of sponge system} and \eqref{variational principle}, we obtain an alternative expression of the mean Hausdorff dimension as follows.
    \begin{equation*}
        \mhdim(X_{\Omega},\sigma,d)= \sup_{\mu\in\mathcal{M}^{\sigma}(\Omega)}\left( \frac{h_{\mu}(\Omega,\sigma)}{\log m_r}+\sum\limits_{i=1}^{r-1}\left(\frac{1}{\log m_{i}}-\frac{1}{\log m_{i+1}}\right)h_{(\pi_i)_{*}\mu}(\pi_i(\Omega),\sigma) \right).
    \end{equation*}
    Moreover, using the standard variational principle for topological entropy, (cf. \cite[Chapter 8]{Wal82}), \eqref{mmdim of sponge system} yields that
    \begin{equation*}
        \mmdim(X_{\Omega},\sigma,d)=\sup_{\mu\in\mathcal{M}^{\sigma}(\Omega)}\frac{h_{\mu}(\Omega,\sigma)}{\log m_r}+\sum\limits_{i=1}^{r-1}\left(\frac{1}{\log m_{i}}-\frac{1}{\log m_{i+1}}\right)\sup_{\nu_i\in\mathcal{M}^{\sigma}(\pi_i(\Omega))}h_{\nu_i}(\pi_i(\Omega),\sigma).
    \end{equation*}
    Observe that the difference between the formulae for the mean Hausdorff dimension and the metric mean dimension of a sponge system is whether we take supremum simultaneously or separately for these terms involving measure-theoretic entropy, i.e. $h_{\mu}(\Omega,\sigma)$ and $h_{\nu_i}(\pi_i(\Omega),\sigma), i=1,\ldots,r-1$. In particular, if $(X_{\Omega},\sigma)$ is a carpet system, i.e. $r=2$, then Theorem \ref{mdim of sponge system} reduces to \cite[Theorem 5.3]{Tsu22}.
\end{rem}

\section{Proof of Theorem \ref{mdim of sponge system}}\label{Proofs}
\subsection{Calculation of metric mean dimension}\label{Calculation of metric mean dimension}
In this section, we prove \eqref{mmdim of sponge system} in Theorem \ref{mdim of sponge system}. Fix $N\in\N$. For each $1\leq i\leq r$, denote by $\pi_i(\Omega)\vert_N$ the image under the projection
\begin{equation*}
    (\prod_{l=1}^{i}\mathcal{I}_l)^{\N}\to \prod_{l=1}^{i}\mathcal{I}_l^N,
    ((\mathbf{x}_{k1})_{k\in\N},\ldots,(\mathbf{x}_{ki})_{k\in\N}) \mapsto ((\mathbf{x}_{k1})_{1\leq k\leq N},\ldots,(\mathbf{x}_{ki})_{1\leq k\leq N}).
\end{equation*}
We also define $X_{\Omega}\vert_N$ as the image of $X_{\Omega}$ under the projection
\begin{equation*}
    ([0,1]^{\N})^r \to [0,1]^{rN},
    ((\mathbf{y}_{k1})_{k\in\N},\ldots,(\mathbf{y}_{kr})_{k\in\N}) \mapsto ((\mathbf{y}_{k1})_{1\leq k\leq N},\ldots,(\mathbf{y}_{kr})_{1\leq k\leq N}).
\end{equation*}
Then
\begin{equation*}
    X_{\Omega}\vert_N=\left\{\left(\sum\limits_{k=1}^{\infty} \frac{\mathbf{x}_{k1}}{m_1^k},\ldots,\sum\limits_{k=1}^{\infty}\frac{\mathbf{x}_{kr}}{m_r^k}\right)\in[0,1]^{rN} \middle|\,  (\mathbf{x}_{k1},\ldots,\mathbf{x}_{kr})\in\Omega\vert_N, \forall k\in\N\right\}.
\end{equation*}
Fix $M\in\N$. Let $L_i(M)$ be the unique integer satisfying
\begin{equation*}
    m_i^{-L_i(M)-1}<m_1^{-M} \leq m_i^{-L_i(M)}
\end{equation*}
for $i=1,\ldots,r$. Note that $L_1(M)=M$. In particular, we have
\begin{equation}\label{Li(M)}
    M\frac{\log m_1}{\log m_i}-1 \leq L_i(M) <M\frac{\log m_1}{\log m_i},
\end{equation}
i.e. $L_i(M)=\lfloor M\log m_1/\log m_i \rfloor$. Fix a point $(\mathbf{x}_1,\ldots,\mathbf{x}_r)=((\mathbf{x}_{k1})_{k\in\N},\ldots,(\mathbf{x}_{kr})_{k\in\N})\in (\Omega\vert_N)^{\N}$ with $\mathbf{x}_{kl}\in\mathcal{I}_l^N$ and $(\mathbf{x}_{k1},\ldots,\mathbf{x}_{kr})\in\Omega\vert_N$ for each $k\in\N$. We denote the \textbf{approximate cube} of level $M\in\N$ centred at $(\mathbf{x}_1,\ldots,\mathbf{x}_r)$ by $Q_{N,M}(\mathbf{x}_1,\ldots,\mathbf{x}_r)$ and define it by
\begin{equation} \label{approximate cube} 
    Q_{N,M}(\mathbf{x}_1,\ldots,\mathbf{x}_r)  =
   \left\{\left(\sum\limits_{k=1}^{\infty} \frac{\mathbf{y}_{k1}}{m_1^k},\ldots,\sum\limits_{k=1}^{\infty}\frac{\mathbf{y}_{kr}}{m_r^k}\right)  \middle|\, 
    \parbox{2.5in}{\centering $(\mathbf{y}_{k1},\ldots,\mathbf{y}_{kr})\in\Omega\vert_N$, $\forall k\in\N$ with \\
    $\mathbf{y}_{ki}=\mathbf{x}_{ki}$ for all $1\leq k\leq L_i(M)$ \\ and $1\leq i \leq r$} \right\}. 
\end{equation} 

The set $Q_{N,M}(\mathbf{x}_1,\ldots,\mathbf{x}_r)$ only relies on the coordinates $\mathbf{x}_{11},\ldots,\mathbf{x}_{L_1(M),1},\ldots,\mathbf{x}_{1r},\ldots,\mathbf{x}_{L_r(M),r}$, thus we also denote it by
\begin{equation*}
    Q_{N,M}(\mathbf{x}_{11},\ldots,\mathbf{x}_{L_1(M),1},\ldots,\mathbf{x}_{1r},\ldots,\mathbf{x}_{L_r(M),r}).
\end{equation*}
Note that by \eqref{Li(M)} we immediately have
\begin{equation*}
    \diam(Q_{N,M}(\mathbf{x}_1,\ldots,\mathbf{x}_r),\|\cdot\|_{\infty})\leq\max\{m_i^{-L_i(M)}: 1\leq i\leq r\}\leq m_rm_1^{-M},
\end{equation*}
where $\|\cdot\|_{\infty}$ denotes the $\ell^{\infty}$-norm on $X_{\Omega}\vert_N\subset\R^{rN}$.

\begin{lem}(cf. \cite[Lemma 5.5]{Tsu22})\label{equivalent metric}
    Let $\varepsilon>0$. For any $N\in\N$ we have
    \begin{equation*}
        \#_{\sep}(X_{\Omega}\vert_N,\|\cdot\|_{\infty},\varepsilon) \leq \#_{\sep}(X_{\Omega},d_N,\varepsilon).
    \end{equation*}
    If $N_0\in\N$ satisfies $\sum_{n\geq N_0}2^{-n}<\varepsilon/2$, then
    \begin{equation*}
        \#(X_{\Omega},d_N,\varepsilon) \leq \#(X_{\Omega}\vert_{N+N_0},\|\cdot\|_{\infty},\frac{\varepsilon}{2}).
    \end{equation*}
\end{lem}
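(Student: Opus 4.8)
The plan is to deduce both inequalities from two pointwise comparisons between the Bowen metric $d_N$ on $X_\Omega$ and the $\ell^\infty$-metric on a finite truncation, and then to translate these into statements about separating and covering numbers; this is the $r$-block version of the comparison used by Tsukamoto for carpets. Write $p_M\colon X_\Omega\to X_\Omega\vert_M$ for the restriction to the first $M$ coordinates (those read off under $\sigma^0,\dots,\sigma^{M-1}$); by the very definition of $X_\Omega\vert_M$ this is a continuous surjection. Writing $z_{l,j}$ for the $j$-th coordinate of the $l$-th block of $z\in X_\Omega$, one has $\|p_M(z)-p_M(z')\|_\infty=\max_{1\le j\le M}\max_{1\le l\le r}|z_{l,j}-z'_{l,j}|$, whereas $d(\sigma^{n}z,\sigma^{n}z')=\sum_{k\ge 1}2^{-k}\max_{1\le l\le r}|z_{l,k+n}-z'_{l,k+n}|$.

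For the first inequality I would argue by lifting. Given an $\varepsilon$-separated family $\bar z^{(1)},\dots,\bar z^{(n)}$ in $(X_\Omega\vert_N,\|\cdot\|_\infty)$, I would use surjectivity of $p_N$ to pick $z^{(a)}\in X_\Omega$ with $p_N(z^{(a)})=\bar z^{(a)}$. For $a\ne b$ the $\ell^\infty$-discrepancy (at least $\varepsilon$) between $\bar z^{(a)}$ and $\bar z^{(b)}$ is attained at some coordinate whose index $j_0$ satisfies $1\le j_0\le N$, say $|z^{(a)}_{l_0,j_0}-z^{(b)}_{l_0,j_0}|\ge\varepsilon$. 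That coordinate occurs as the $k=1$ term of $d(\sigma^{\,j_0-1}z^{(a)},\sigma^{\,j_0-1}z^{(b)})$, and all other terms of that sum are nonnegative; since $0\le j_0-1<N$, the iterate $\sigma^{\,j_0-1}$ is among the $N$ iterates defining $d_N$, so $d_N(z^{(a)},z^{(b)})$ detects the separation. Hence the lifts are separated in $(X_\Omega,d_N)$ and $\#_{\sep}(X_\Omega\vert_N,\|\cdot\|_\infty,\varepsilon)\le\#_{\sep}(X_\Omega,d_N,\varepsilon)$.

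For the second inequality I would pull back a cover: fix $N_0$ with $\sum_{n\ge N_0}2^{-n}<\varepsilon/2$ and take a cover $\{V_1,\dots,V_m\}$ of $X_\Omega\vert_{N+N_0}$ by sets of $\|\cdot\|_\infty$-diameter $<\varepsilon/2$, with $m=\#(X_\Omega\vert_{N+N_0},\|\cdot\|_\infty,\varepsilon/2)$. Then $\{p_{N+N_0}^{-1}(V_i)\}_{i=1}^m$ covers $X_\Omega$, and for $z,z'\in p_{N+N_0}^{-1}(V_i)$ and $0\le n<N$ I would split $d(\sigma^n z,\sigma^n z')$ according to whether $k+n\le N+N_0$ or $k+n>N+N_0$: in the first range only coordinates recorded by $p_{N+N_0}$ appear, so that part is at most $\diam(V_i,\|\cdot\|_\infty)<\varepsilon/2$; in the second range $|z_{l,k+n}-z'_{l,k+n}|\le 1$ and $k>N+N_0-n\ge N_0+1$, so that part is at most $\sum_{n\ge N_0}2^{-n}<\varepsilon/2$. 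Therefore $\diam(p_{N+N_0}^{-1}(V_i),d_N)<\varepsilon$, and $\#(X_\Omega,d_N,\varepsilon)\le m=\#(X_\Omega\vert_{N+N_0},\|\cdot\|_\infty,\varepsilon/2)$.

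The second inequality is the routine one. The point deserving care is the first: one must ensure that passing to a lift does not destroy the separation, which forces the use of the location of the $\ell^\infty$-discrepancy at a coordinate index $j_0\le N$ — a discrepancy situated at a later coordinate would not be visible to any of the $N$ iterates in $d_N$. One should also keep track of the weight $2^{-1}$ carried by the $k=1$ term of $d$; this costs at most a harmless factor in the scale $\varepsilon$, absorbed by the inequalities $\#_{\sep}(\cdot,\cdot,\varepsilon)\le\#(\cdot,\cdot,\varepsilon)\le\#_{\sep}(\cdot,\cdot,\varepsilon/4)$ recalled in Section~\ref{Preliminaries}, and in any case invisible to the $\varepsilon\to0$ limits defining the metric mean dimension.
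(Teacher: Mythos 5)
Your proof follows the same route as the paper's: for the first inequality you lift an $\varepsilon$-separated family from $(X_{\Omega}\vert_N,\|\cdot\|_{\infty})$ through the (surjective) truncation map and observe that a discrepancy at a coordinate $j_0\le N$ is seen by the iterate $\sigma^{j_0-1}$, and for the second you pull back a cover of $X_{\Omega}\vert_{N+N_0}$ and split $d(\sigma^n z,\sigma^n z')$ into the truncated part and the tail controlled by $\sum_{n\ge N_0}2^{-n}<\varepsilon/2$, exactly as in the paper. Your closing remark about the weight $2^{-1}$ on the $k=1$ term is well taken: with the metric as normalized here the lifting argument literally gives $d_N\ge\varepsilon/2$ rather than $\ge\varepsilon$ (a point the paper's own proof passes over silently), and as you say this constant is harmless for the metric mean dimension computation.
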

\begin{proof}
    Fix $N\in\N$. Observe that for $((\mathbf{x}_{k1})_{k\in\N},\ldots,(\mathbf{x}_{kr})_{k\in\N}),((\mathbf{y}_{k1})_{k\in\N},\ldots,(\mathbf{y}_{kr})_{k\in\N})\in X_{\Omega}$, the inequality 
    $\|((\mathbf{x}_{k1})_{1\leq k\leq N},\ldots,(\mathbf{x}_{kr})_{1\leq k\leq N})-((\mathbf{y}_{k1})_{1\leq k\leq N},\ldots,(\mathbf{y}_{kr})_{1\leq k\leq N})\|_{\infty}\geq\varepsilon$ implies that 
    \begin{equation*}
        d_N(((\mathbf{x}_{k1})_{k\in\N},\ldots,(\mathbf{x}_{kr})_{k\in\N}),((\mathbf{y}_{k1})_{k\in\N},\ldots,(\mathbf{y}_{kr})_{k\in\N}))\geq\varepsilon
    \end{equation*}
    and hence the first equality holds. The second equality follows from
    \begin{equation*}
    \begin{split}
        &d_N(((\mathbf{x}_{k1})_{k\in\N},\ldots,(\mathbf{x}_{kr})_{k\in\N}),((\mathbf{y}_{k1})_{k\in\N},\ldots,(\mathbf{y}_{kr})_{k\in\N}))\\
        &< \|((\mathbf{x}_{k1})_{1\leq k\leq N+N_0},\ldots,(\mathbf{x}_{kr})_{1\leq k\leq N+N_0})-((\mathbf{y}_{k1})_{1\leq k\leq N+N_0},\ldots,(\mathbf{y}_{kr})_{1\leq k\leq N+N_0})\|_{\infty}+\frac{\varepsilon}{2}.
    \end{split}
    \end{equation*}
\end{proof}

\begin{lem}\label{upper and lower bound of mmdim}
    Fix $N,M\in\N$. Then we have
    \begin{equation*}
        \begin{split}
            \#(X_{\Omega}\vert_N, \|\cdot\|_{\infty}, m_rm_1^{-M}) &\leq |\Omega\vert_N|^{L_r(M)}\cdot\prod_{i=1}^{r-1}|\pi_i(\Omega)\vert_N|^{L_i(M)-L_{i+1}(M)},\\
            \#_{\sep}(X_{\Omega}\vert_N, \|\cdot\|_{\infty}, m_1^{-M}) &\geq |\Omega\vert_N|^{L_r(M)}\cdot\prod_{i=1}^{r-1}|\pi_i(\Omega)\vert_N|^{L_i(M)-L_{i+1}(M)}.
        \end{split}
    \end{equation*}
\end{lem}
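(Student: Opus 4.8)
The plan is to prove the two bounds of Lemma~\ref{upper and lower bound of mmdim} by a direct counting argument based on the combinatorial structure of approximate cubes, mirroring the strategy of \cite[Lemma~5.5--5.6]{Tsu22} but carried out inductively over the $r$ coordinate blocks. The key observation is that an approximate cube $Q_{N,M}(\mathbf{x}_1,\ldots,\mathbf{x}_r)$ is completely determined by the data $\mathbf{x}_{ki}$ for $1\leq k\leq L_i(M)$ and $1\leq i\leq r$, but these data are constrained to lie in $\Omega\vert_N$ in each level $k$, so the number of distinct approximate cubes is \emph{not} simply $\prod_i|\pi_i(\Omega)\vert_N|^{L_i(M)}$. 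Since $L_1(M)\geq L_2(M)\geq\cdots\geq L_r(M)$ by \eqref{Li(M)} (as $m_1\leq\cdots\leq m_r$), we should count as follows: for levels $1\leq k\leq L_r(M)$ we must specify a full element of $\Omega\vert_N$; for levels $L_r(M)<k\leq L_{r-1}(M)$ we only need to specify the image in $\pi_{r-1}(\Omega)\vert_N$, i.e.\ the first $r-1$ coordinate strings; in general, for $L_{i+1}(M)<k\leq L_i(M)$ we specify an element of $\pi_i(\Omega)\vert_N$. This telescoping yields exactly the product
\begin{equation*}
    |\Omega\vert_N|^{L_r(M)}\cdot\prod_{i=1}^{r-1}|\pi_i(\Omega)\vert_N|^{L_i(M)-L_{i+1}(M)}.
\end{equation*}

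For the upper bound on the covering number, I would argue that the approximate cubes of level $M$ form a cover of $X_\Omega\vert_N$ (every point lies in the approximate cube centred at itself), and each has $\ell^\infty$-diameter at most $m_r m_1^{-M}$ as already noted in the text before the lemma. Hence $\#(X_\Omega\vert_N,\|\cdot\|_\infty,m_rm_1^{-M})$ is bounded by the number of distinct approximate cubes, which by the counting above is the claimed product. One should be a little careful that the covering number is defined with open sets of diameter \emph{strictly} less than $\varepsilon$; since approximate cubes are closed, one replaces each $Q_{N,M}$ by a slightly enlarged open neighbourhood still of diameter $<m_r m_1^{-M}+\delta$ --- but since the bound in the lemma uses $m_r m_1^{-M}$ exactly, the cleanest fix is to note that the $\ell^\infty$-diameter of an approximate cube is actually at most $\max_i m_i^{-L_i(M)} < m_r m_1^{-M}$ when the inequality in \eqref{Li(M)} is strict, or to absorb it by the standard observation (already used in the paper) that strict versus non-strict diameter does not affect the relevant quantities; I would phrase it so the strict inequality $m_i^{-L_i(M)}<m_r m_1^{-M}$ holds, which it does unless $i=r$ and $L_r(M)=M$, i.e.\ $m_r=m_1$, and in that degenerate case one perturbs slightly.

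For the lower bound on the separating number, I would construct an explicit $m_1^{-M}$-separated set by picking, from each distinct approximate cube, one representative point whose coordinates beyond the determining levels are fixed once and for all (say, chosen in a consistent way, e.g.\ extending by a fixed admissible continuation in $\Omega\vert_N$ at every level $k>L_i(M)$). Two such representatives coming from different approximate cubes must differ in some coordinate $\mathbf{x}_{ki}$ with $k\leq L_i(M)$, and since that digit string differs by at least $1$ in some entry while being weighted by $m_i^{-k}\geq m_i^{-L_i(M)}\geq m_1^{-M}$, and the tails (weighted by $\sum_{j>L_i(M)}m_i^{-j}\cdot(m_i-1)=m_i^{-L_i(M)}$... ) --- here one needs the standard base-$m_i$ estimate that distinct prefixes of length $L_i(M)$ produce points at $\ell^\infty$-distance $\geq m_i^{-L_i(M)}\geq m_1^{-M}$ in that coordinate, which holds because $\sum_{j>L_i(M)}(m_i-1)m_i^{-j}=m_i^{-L_i(M)}$ so the worst overlap still leaves a gap of size at least $m_i^{-L_i(M)}\cdot(\text{something})$; the clean statement is that if the first $L_i(M)$ digits of two base-$m_i$ expansions differ, the numbers differ by strictly less than $m_i^{-L_i(M)}$ is \emph{false} in general, so one must instead choose the representative carefully, e.g.\ always extend by the all-zero tail or match tails --- then the difference in that coordinate is at least $m_i^{-L_i(M)}\geq m_1^{-M}$ honestly. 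This gives a separated set of the required cardinality.

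The main obstacle I anticipate is the lower-bound direction: ensuring that the chosen representatives of distinct approximate cubes really are $m_1^{-M}$-separated requires care with the carry/overlap phenomenon of base-$m_i$ expansions (two numbers with different length-$L_i(M)$ prefixes can be arbitrarily close if tails are allowed to vary freely, e.g.\ $0.0\overline{(m_i-1)}$ versus $0.1\overline{0}$). The fix --- and the crux of the argument --- is to select representatives with a \emph{fixed} tail (for instance, continue with a fixed periodic admissible word, or with zeros when $\mathbf 0\in\Omega\vert_N$ level-wise, which we may assume or arrange) so that the map from approximate cubes to representatives is injective \emph{and} any two representatives differ by at least $m_i^{-L_i(M)}\geq m_1^{-M}$ in whatever coordinate block $i$ their determining data first differ. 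The upper bound, by contrast, is essentially immediate from the diameter estimate plus the telescoping count. I would also invoke Lemma~\ref{equivalent metric} afterwards (in the surrounding text, not in this lemma's proof) to transfer these $\ell^\infty$-estimates on $X_\Omega\vert_N$ back to $(X_\Omega,d_N)$, and then feed the asymptotics $L_i(M)\sim M\log m_1/\log m_i$ together with $\log|\pi_i(\Omega)\vert_N|\sim N\htop(\pi_i(\Omega),\sigma)$ into the definition of metric mean dimension to recover \eqref{mmdim of sponge system}.
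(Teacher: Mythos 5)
Your proposal follows essentially the same route as the paper's proof: the upper bound by covering $X_{\Omega}\vert_N$ with approximate cubes and counting them via the telescoping product over the ranges $L_{i+1}(M)<k\leq L_i(M)$, and the lower bound by selecting one representative per cube with controlled completions so that the first coordinate block $i$ in which two determining data sets differ witnesses separation $\geq m_i^{-L_i(M)}\geq m_1^{-M}$. One imprecision to fix in the lower bound: the completion of coordinate $i$ at a level $k$ with $L_{j+1}(M)<k\leq L_j(M)$, $j<i$, cannot be a single word ``fixed once and for all,'' since it must be admissible given the already-specified digits $(\mathbf{x}_{k1},\ldots,\mathbf{x}_{kj})$, which vary from cube to cube; the paper instead fixes sections $\mathbf{s}_j$ lifting $\pi_j(\Omega)\vert_N$ into $\pi_{j+1}(\Omega)\vert_N$, and the point is that these completions are functions of determining data belonging to coordinates $<i$, hence agree between two representatives whose data first differ in block $i$, after which your lattice argument (difference a nonzero multiple of $m_i^{-L_i(M)}$) goes through. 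Your worry about strict versus non-strict diameter is moot, since \eqref{Li(M)} gives $m_i^{-L_i(M)}<m_i\,m_1^{-M}\leq m_r m_1^{-M}$ strictly in every case.
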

\begin{proof}
    Recall that $\diam(Q_{N,M}(\mathbf{x}_1,\ldots,\mathbf{x}_r),\|\cdot\|_{\infty}) \leq m_rm_1^{-M}$. The first inequality follows from
    \begin{equation*} 
    X_{\Omega}\vert_N  =
   \bigcup\left\{Q_{N,M}(\mathbf{x}_{11},\ldots,\mathbf{x}_{L_1(M),1},\ldots,\mathbf{x}_{1r},\ldots,\mathbf{x}_{L_r(M),r})  \middle|\, 
    \parbox{2.5in}{\centering $(\mathbf{x}_{k1},\ldots,\mathbf{x}_{kr})\in\Omega\vert_N$, for each $1\leq k\leq L_r(M)$ and \\
    $(\mathbf{x}_{k1},\ldots,\mathbf{x}_{ki})\in \pi_i(\Omega)\vert_N$ for each $L_{i+1}(M)+1\leq k\leq L_i(M)$ \\ and $1\leq i\leq r-1$} \right\}. 
    \end{equation*} 
    We now prove the second inequality. Fix a point $(\mathbf{y}_1,\ldots,\mathbf{y}_r)\in\Omega\vert_N$. Fix $1\leq i\leq r-1$. For each $(\mathbf{v}_1,\ldots,\mathbf{v}_i)\in\pi_i(\Omega)\vert_N$, we choose $\mathbf{s}_i(\mathbf{v}_1,\ldots,\mathbf{v}_i)\in\mathcal{I}_{i+1}^N$ such that $(\mathbf{v}_1,\ldots,\mathbf{v}_i,\mathbf{s}_i(\mathbf{v}_1,\ldots,\mathbf{v}_i))\in\pi_{i+1}(\Omega)\vert_N$. For $(\mathbf{x}_{k1},\ldots,\mathbf{x}_{kr})\in\Omega\vert_N (1\leq k\leq L_r(M))$ satisfying $(\mathbf{x}_{k1},\ldots,\mathbf{x}_{ki})\in \pi_i(\Omega)\vert_N$ for each $L_{i+1}(M)+1\leq k\leq L_i(M)$ and $1\leq i\leq r-1$, we choose a point
    \begin{equation*}
        \begin{split}
            &p(\mathbf{x}_{11},\ldots,\mathbf{x}_{L_1(M),1},\ldots,\mathbf{x}_{1r},\ldots,\mathbf{x}_{L_r(M),r})=\\
            &\left( \sum_{k=1}^{L_1(M)}\frac{\mathbf{x}_{k1}}{m_1^k}+ \sum_{k=L_1(M)+1}^{\infty}\frac{\mathbf{y}_{1}}{m_1^k},
             \sum_{k=1}^{L_2(M)}\frac{\mathbf{x}_{k2}}{m_2^k}+ \sum_{k=L_2(M)+1}^{L_1(M)}\frac{\mathbf{s}_1(\mathbf{x}_{k1})}{m_2^k}+ \sum_{k=L_1(M)+1}^{\infty}\frac{\mathbf{y}_{2}}{m_2^k}, 
             \right.\\
             &\sum_{k=1}^{L_3(M)}\frac{\mathbf{x}_{k3}}{m_3^k} + \sum_{k=L_3(M)+1}^{L_2(M)}\frac{\mathbf{s}_2(\mathbf{x}_{k1},\mathbf{x}_{k2})}{m_3^k} +
             \sum_{k=L_2(M)+1}^{L_1(M)}\frac{\mathbf{s}_2(\mathbf{x}_{k1},\mathbf{s}_1(\mathbf{x}_{k1}))}{m_3^k}
             +\sum_{k=L_1(M)+1}^{\infty}\frac{\mathbf{y}_{3}}{m_3^k},\ldots,\\
            &\left.\sum_{k=1}^{L_r(M)}\frac{\mathbf{x}_{kr}}{m_r^k} + \sum_{i=1}^{r-1}\sum_{k=L_{i+1}(M)+1}^{L_i(M)}\frac{\mathbf{s}_{r-1}(\mathbf{x}_{k1},\ldots,\mathbf{x}_{ki},\mathbf{s}_{i+1}(\mathbf{x}_{k1},\ldots,\mathbf{x}_{ki}),\ldots)}{m_r^k} +
            \sum_{k=L_1(M)+1}^{\infty}\frac{\mathbf{y}_{r}}{m_r^k} \right),
        \end{split}
    \end{equation*}
    which lies in $X_{\Omega}\vert_N$. Furthermore, for two distinct points $(\mathbf{x}_{11},\ldots,\mathbf{x}_{L_1(M),1},\ldots,\mathbf{x}_{1r},\ldots,\mathbf{x}_{L_r(M),r})$ and $ (\mathbf{x}'_{11},\ldots,\mathbf{x}'_{L_1(M),1},\ldots,\mathbf{x}'_{1r},\ldots,\mathbf{x}'_{L_r(M),r})$, we have
    \begin{equation*}
        \begin{split}
            \|p(\mathbf{x}_{11},\ldots,\mathbf{x}_{L_1(M),1},\ldots,\mathbf{x}_{1r},\ldots,\mathbf{x}_{L_r(M),r})&-p'(\mathbf{x}'_{11},\ldots,\mathbf{x}'_{L_1(M),1},\ldots,\mathbf{x}'_{1r},\ldots,\mathbf{x}'_{L_r(M),r})\|_{\infty}\\
            &\geq \min\{m_i^{-L_i(M)}: 1\leq i\leq r\}\geq m_1^{-M}.
        \end{split}
    \end{equation*}
    Therefore the set
    \begin{equation*}
        \begin{split}
            &\left\{p(\mathbf{x}_{11},\ldots,\mathbf{x}_{L_1(M),1},\ldots,\mathbf{x}_{1r},\ldots,\mathbf{x}_{L_r(M),r}): (\mathbf{x}_{k1},\ldots,\mathbf{x}_{kr})\in\Omega\vert_N \text{ for } 1\leq k\leq L_r(M),\right.\\
             &\left. (\mathbf{x}_{k1},\ldots,\mathbf{x}_{ki})\in \pi_i(\Omega)\vert_N \text{ for each } L_{i+1}(M)+1\leq k\leq L_i(M) \text{ and } 1\leq i\leq r-1 \right\}
        \end{split}
    \end{equation*}
    is $m_1^{-M}$-separated with respect to the $\ell^{\infty}$-distance, which yields the second inequality.
\end{proof}

\begin{proof}[Proof of formula \eqref{mmdim of sponge system} in Theorem \ref{mdim of sponge system}]
    Combining Lemma \ref{equivalent metric}, Lemma \ref{upper and lower bound of mmdim} and \eqref{Li(M)}, we have
    \begin{equation*}
        \begin{split}
            \mmdim(X_{\Omega},\sigma,d)
            &=\lim\limits_{M\to\infty}\lim\limits_{N\to\infty}\frac{\log(|\Omega\vert_N|^{L_r(M)}\cdot\prod_{i=1}^{r-1}|\pi_i(\Omega)\vert_N|^{L_i(M)-L_{i+1}(M)})}{NM\log m_1} \\
            &=\lim\limits_{M\to\infty}\frac{1}{M\log m_1}\lim\limits_{N\to\infty}\frac{L_r(M)\log |\Omega\vert_N| + \sum_{i=1}^{r-1}(L_i(M)-L_{i+1}(M))\log |\pi_i(\Omega)\vert_N|}{N} \\
            &=\lim\limits_{M\to\infty}\frac{L_r(M)\htop(\Omega,\sigma)+ \sum_{i=1}^{r-1}(L_i(M)-L_{i+1}(M)) \htop(\pi_i(\Omega),\sigma)}{M\log m_1} \\
            &=\frac{\htop(\Omega,\sigma)}{\log m_r}+\sum\limits_{i=1}^{r-1}\left(\frac{1}{\log m_{i}}-\frac{1}{\log m_{i+1}}\right)\htop(\pi_i(\Omega),\sigma).
        \end{split}
    \end{equation*}
\end{proof}

\subsection{Calculation of weighted topological entropy}
As we emphasized in the introduction, the mean Hausdorff of a sponge system is expressed via the weighted topological entropy. For this reason, in this section, we first give an explicit formula to calculate the weighted topological entropy.

Let $\mathcal{I}_{l},l=1,\ldots,r$ be finite sets, and let $((\prod_{l=1}^i\mathcal{I}_{l})^{\N},\sigma)$ be the full shift on the alphabet $\prod_{l=1}^i\mathcal{I}_{l}$ for each $1\leq i\leq r$. For $1\leq i\leq r-1$ , let $\tau_i:(\prod_{l=1}^{r-i+1}\mathcal{I}_{l})^{\N}\to(\prod_{l=1}^{r-i}\mathcal{I}_{l})^{\N}$ be the natural projection defined by
\begin{equation*}
    \tau_i(\mathbf{x}_1,\ldots,\mathbf{x}_{r-i+1})=(\mathbf{x}_1,\ldots,\mathbf{x}_{r-i})
\end{equation*}
and let $\pi_i=\tau_{r-i}\circ\tau_{r-i-1}\circ\cdots\circ\tau_1$. Let $\Omega\subset(\prod_{l=1}^r\mathcal{I}_{l})^{\N}$ be a subshift. Then $\pi_i(\Omega)$ is a subshift of $(\prod_{l=1}^i\mathcal{I}_{l})^{\N}$ for $1\leq i\leq r-1$. Recall that sometimes we use $\pi_r(\Omega)$ to denote $\Omega$. For $N\in\N$, we define $\pi_i(\Omega)\vert_N\subset\prod_{l=1}^i\mathcal{I}_{l}^N$ as the image of the projection of $\pi_i(\Omega)$ to the first $N$ coordinates. We denote by $\tau_i\vert_N:\pi_{r-i+1}(\Omega)\vert_N\to\pi_{r-i}(\Omega)\vert_N$ the natural projection map.

\begin{lem}\label{Formula for the weighted topological entropy}
    Let $\mathbf{a}=(a_1,\ldots,a_{r-1})$ with $0\leq a_i\leq 1$ for each $i$. 
    For $n\in\N$ and $\mathbf{v}^{(i)}\in\pi_i(\Omega)\vert_N,i=1,\ldots,r-1$,
    denote by
    \begin{equation*}
        Z_N(\mathbf{v}^{(r-1)}):=|(\tau_{1}\vert_N)^{-1}(\mathbf{v}^{(r-1)})|
    \end{equation*}
    the cardinality of $(\tau_{1}\vert_N)^{-1}(\mathbf{v}^{(r-1)})\subset \Omega\vert_N$. More generally, if $Z_N(\mathbf{v}^{(r-i+1)})$ is already defined, let
    \begin{equation*}
        Z_N(\mathbf{v}^{(r-i)}):=\sum_{\mathbf{v}^{(r-i+1)}\in(\tau_{i}\vert_N)^{-1}(\mathbf{v}^{(r-i)})}Z_N(\mathbf{v}^{(r-i+1)})^{a_{i-1}}
    \end{equation*}
    for $i=2,\ldots,r-1$. Finally, let
    \begin{equation*}
        Z_N:=\sum_{\mathbf{v}^{(1)}\in\pi_1(\Omega)\vert_N}Z_N(\mathbf{v}^{(1)})^{a_{r-1}}.
    \end{equation*}
    Then the weighted topological entropy of $\mathbf{a}$-exponent is given by
    \begin{equation*}
        \htop^{\mathbf{a}}(\{(\pi_i(\Omega),\sigma)\}_{i=1}^r,\{\tau_i\}_{i=1}^{r-1})=\lim_{N\to\infty}\frac{\log Z_N}{N}.
    \end{equation*}
    Notice that the limit exists since $\log Z_N$ is sub-additive in $N$. 
\end{lem}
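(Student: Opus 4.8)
The plan is to show that, for each fixed $\varepsilon\in(0,1/2)$, the quantity $\#_r^{\mathbf{a}}(\pi_1(\Omega),N,\varepsilon)$ equals $Z_{N'}$ for an integer $N'=N'(\varepsilon,N)$ that differs from $N$ by a constant depending only on $\varepsilon$, and then to interchange the limits in the definition of $\htop^{\mathbf{a}}$. By the variational principle \eqref{variational principle}, $\htop^{\mathbf{a}}$ of a tower of subshifts is independent of the choice of compatible metrics, so I would first equip each $\pi_i(\Omega)$ with the metric $d^{(i)}(\mathbf{x},\mathbf{y})=2^{-\min\{k\ge 1:\,\mathbf{x}_k\neq\mathbf{y}_k\}}$ (and $0$ when $\mathbf{x}=\mathbf{y}$). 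With this choice one checks the following elementary fact about the Bowen metrics: putting $N'=N+\lfloor\log_2(1/\varepsilon)\rfloor-1$, every subset of $\pi_i(\Omega)$ of $d_N^{(i)}$-diameter $<\varepsilon$ is contained in a (necessarily unique) length-$N'$ cylinder $[\mathbf{w}]=\{\mathbf{z}\in\pi_i(\Omega):\mathbf{z}_k=\mathbf{w}_k,\ 1\le k\le N'\}$, while conversely every length-$N'$ cylinder has $d_N^{(i)}$-diameter $<\varepsilon$. Consequently, in computing any $\#_j^{\mathbf{a}}(\,\cdot\,,N,\varepsilon)$ the admissible covers are exactly the refinements of the partition of $\pi_i(\Omega)$ into length-$N'$ cylinders, that partition is itself admissible, and $\tau_j^{-1}$ of a length-$N'$ cylinder of $\pi_{r-j}(\Omega)$ is the disjoint union of the length-$N'$ cylinders of $\pi_{r-j+1}(\Omega)$ lying over it.

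The heart of the argument will be the identity
\begin{equation*}
\#_j^{\mathbf{a}}(\tau_j^{-1}([\mathbf{v}]),N,\varepsilon)=Z_{N'}(\mathbf{v})\qquad(1\le j\le r-1),
\end{equation*}
for every length-$N'$ cylinder $[\mathbf{v}]\subset\pi_{r-j}(\Omega)$, where $Z_{N'}(\mathbf{v})$ is the quantity from the statement formed at scale $N'$; I would prove it by induction on $j$. The base case $j=1$ is immediate: $\tau_1^{-1}([\mathbf{v}])$ is the disjoint union of the $|(\tau_1\vert_{N'})^{-1}(\mathbf{v})|$ length-$N'$ cylinders of $\Omega$ over $\mathbf{v}$, each admissible covering set meets just one of them, so the minimal cover is that family and the count is $|(\tau_1\vert_{N'})^{-1}(\mathbf{v})|=Z_{N'}(\mathbf{v})$. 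For the step $j\to j+1$ and a length-$N'$ cylinder $[\mathbf{u}]\subset\pi_{r-j-1}(\Omega)$, the upper bound comes from feeding the cover of $\tau_{j+1}^{-1}([\mathbf{u}])$ by the length-$N'$ cylinders $[\mathbf{v}]$ over $\mathbf{u}$ into the definition of $\#_{j+1}^{\mathbf{a}}$ and invoking the inductive hypothesis, which reproduces exactly the recursion $Z_{N'}(\mathbf{u})=\sum_{\mathbf{v}}Z_{N'}(\mathbf{v})^{a_j}$. The matching lower bound is obtained by taking an arbitrary admissible cover $\{U_l\}$ of $\tau_{j+1}^{-1}([\mathbf{u}])$ (we may assume each $U_l\subseteq\tau_{j+1}^{-1}([\mathbf{u}])$), observing that each nonempty $U_l$ sits in a single length-$N'$ cylinder $[\mathbf{w}_l]\subseteq\tau_{j+1}^{-1}([\mathbf{u}])$ and that, for each $[\mathbf{v}]$ over $\mathbf{u}$, the $U_l$ with $[\mathbf{w}_l]=[\mathbf{v}]$ cover $[\mathbf{v}]$, and then combining the monotonicity and subadditivity of the covering number over a cover with the inequality $(\sum_l t_l)^{a_j}\le\sum_l t_l^{a_j}$, valid since $0\le a_j\le1$; this shows every admissible cover yields a sum $\ge Z_{N'}(\mathbf{u})$. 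Running the same pair of estimates once more at the top level, with $\pi_1(\Omega)$ itself in place of a $\tau$-preimage, gives $\#_r^{\mathbf{a}}(\pi_1(\Omega),N,\varepsilon)=\sum_{\mathbf{v}^{(1)}\in\pi_1(\Omega)\vert_{N'}}Z_{N'}(\mathbf{v}^{(1)})^{a_{r-1}}=Z_{N'}$.

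Finally, since $N'-N=\lfloor\log_2(1/\varepsilon)\rfloor-1$ depends only on $\varepsilon$, and $\log Z_N$ is subadditive in $N$ (so $\lim_{M\to\infty}\frac{\log Z_M}{M}$ exists by Fekete's lemma), for each fixed $\varepsilon\in(0,1/2)$ I would conclude
\begin{equation*}
\lim_{N\to\infty}\frac{\log\#_r^{\mathbf{a}}(\pi_1(\Omega),N,\varepsilon)}{N}=\lim_{N\to\infty}\left(\frac{\log Z_{N'}}{N'}\cdot\frac{N'}{N}\right)=\lim_{M\to\infty}\frac{\log Z_M}{M},
\end{equation*}
which does not depend on $\varepsilon$; letting $\varepsilon\to0$ then gives $\htop^{\mathbf{a}}(\{(\pi_i(\Omega),\sigma)\}_{i=1}^r,\{\tau_i\}_{i=1}^{r-1})=\lim_{N\to\infty}\frac{\log Z_N}{N}$. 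I expect the only genuinely delicate point to be the reverse inequality in the inductive step --- ruling out that some non-cylinder cover undercuts the natural cylinder cover --- which is precisely where the weights force the combined use of the subadditivity of $t\mapsto t^{a_j}$ and the subadditivity of covering numbers; checking the two cylinder/Bowen-metric facts and keeping track of the nested indices $\pi_{r-j}(\Omega)$, $\tau_j$ is routine but must be done carefully.
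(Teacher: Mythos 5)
Your argument is correct and is essentially the paper's proof: the same choice of the $2^{-k}$ metric on each level, the same identification of admissible covers with (refinements of) cylinder partitions, and the same key inequality $(x+y)^{a}\le x^{a}+y^{a}$ driving the lower bound, with your induction on the tower level just a cleaner packaging of the paper's explicit nested sums. The only cosmetic difference is that you establish an exact equality $\#_r^{\mathbf{a}}(\pi_1(\Omega),N,\varepsilon)=Z_{N'}$ at a shifted scale $N'=N+O(\log(1/\varepsilon))$, whereas the paper proves $Z_N\le\#_r^{\mathbf{a}}(\pi_1(\Omega),N,\varepsilon)$ at scale $N$ and treats the reverse inequality as immediate.
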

\begin{proof}
    It is direct to check that
    \begin{equation*}
        \htop^{\mathbf{a}}(\{(\pi_i(\Omega),\sigma)\}_{i=1}^r,\{\tau_i\}_{i=1}^{r-1})\leq\lim_{N\to\infty}\frac{\log Z_N}{N}
    \end{equation*}
    so we verify the opposite inequality. Fix $1\leq i\leq r$. We define the metric on $\pi_i(\Omega)$ by
    \begin{equation*}
        d^{(r-i+1)}((\mathbf{x}_1,\ldots,\mathbf{x}_i),(\mathbf{y}_1,\ldots,\mathbf{y}_i))=2^{-\min\{k\in\N:(x_{k1},\ldots,x_{ki})\neq (y_{k1},\ldots,y_{ki})\}}.
    \end{equation*}
    For a subset $U_i\subset(\prod_{l=1}^{i}\mathcal{I}_l)^{\N}$, we denote by $U_i\vert_N\subset\prod_{l=1}^{i}\mathcal{I}_i^{N}$ be projection onto the first $N$ coordinates. If $\diam(U_i,d_N^{(r-i+1)})<1$, then $U_i\vert_N$ is a singleton or empty.

    Fix $0<\varepsilon<1$. Let $\{V_{j_r}: j_r=1,\ldots,n_r\}$ be an open cover of $\pi_1(\Omega)$ with $\diam(V_{j_r},d_N^{(r)})<\varepsilon$ for each $j_r$ and
    \begin{equation*}
        \#_r^{\mathbf{a}}(\pi_1(\Omega),N,\varepsilon)=\sum_{j_r=1}^{n_r}\#_{r-1}^{\mathbf{a}}(\tau_{r-1}^{-1}(V_{j_r}),N,\varepsilon)^{a_{r-1}}.
    \end{equation*}
    For each $j_r$, let $\{V_{j_rj_{r-1}}: j_{r-1}=1,\ldots,n_{r-1}(j_r)\}$ be an open cover of $\tau_{r-1}^{-1}(V_{j_r})$ with $\diam(V_{j_rj_{r-1}},d_N^{(r-1)})<\varepsilon$ and
    \begin{equation*}
        \#_{r-1}^{\mathbf{a}}(\tau_{r-1}^{-1}(V_{j_r}),N,\varepsilon)=\sum_{j_{r-1}=1}^{n_{r-1}(j_r)}\#_{r-2}^{\mathbf{a}}(\tau_{r-2}^{-1}(V_{j_rj_{r-1}}),N,\varepsilon)^{a_{r-2}}.
    \end{equation*}
    Repeating the process until we obtain an open cover $\tau_{2}^{-1}(V_{j_rj_{r-1}\cdots j_3})=\bigcup_{j_2=1}^{n_2(j_3)}V_{j_rj_{r-1}\cdots j_2}$ with $\diam(V_{j_r\cdots j_2},d_N^{(2)})<\varepsilon$ and
    \begin{equation*}
        \#_{2}^{\mathbf{a}}(\tau_{2}^{-1}(V_{j_r\cdots j_3}),N,\varepsilon)=\sum_{j_2=1}^{n_2(j_3)}\#_{1}^{\mathbf{a}}(\tau_1^{-1}(V_{j_r\cdots j_2}),N,\varepsilon)^{a_1}.
    \end{equation*}
    Furthermore, for each $j_r\cdots j_2$ let $\tau_1^{-1}(V_{j_r\cdots j_2})=\bigcup_{j_1=1}^{n_1(j_2)}V_{j_r\cdots j_1}$ with $\diam(V_{j_r\cdots j_1},d_N^{(1)})<\varepsilon$. Any $V_{j_r\cdots j_1}$ is not empty and therefore $V_{j_r\cdots j_1}\vert_N$ is a singleton. We have
    \begin{equation*}
        \begin{split}
            \#_r^{\mathbf{a}}(\pi_1(\Omega),N,\varepsilon)
            &=\sum_{j_r=1}^{n_r}\left( \sum_{j_{r-1}=1}^{n_{r-1}(j_r)} \left( \cdots\left( \sum_{j_2=1}^{n_2(j_3)}n_1(j_2)^{a_1} \right)^{a_2}\cdots\right)^{a_{r-2}}\right)^{a_{r-1}}.\\
        \end{split}
    \end{equation*}
    Fix $\mathbf{v}^{(1)}\in\pi_1(\Omega)\vert_N,\mathbf{v}^{(2)}\in(\tau_{r-1}\vert_N)^{-1}(\mathbf{v}^{(1)}),\ldots$, and $\mathbf{v}^{(r-1)}\in(\tau_{2}\vert_N)^{-1}(\mathbf{v}^{(r-2)})$. Note that 
    \begin{equation*}
    \begin{split}
        (\tau_{1}\vert_N)^{-1}(\mathbf{v}^{(r-1)})&=\bigcup_{j_r:V_{j_r}\vert_N=\{\mathbf{v}^{(1)}\}} \cup\cdots\cup
        \bigcup_{j_2:V_{j_r\cdots j_2}\vert_N=\{\mathbf{v}^{(r-1)}\}} (\tau_1^{-1}(V_{j_r\cdots j_2}))\vert_N \\
        &=\bigcup_{j_r:V_{j_r}\vert_N=\{\mathbf{v}^{(1)}\}} \cup\cdots\cup
        \bigcup_{j_2:V_{j_r\cdots j_2}\vert_N=\{\mathbf{v}^{(r-1)}\}} \bigcup_{j_1=1}^{n_1(j_2)} (V_{j_r\cdots j_1}\vert_N).
    \end{split}
    \end{equation*}
    Since every $V_{j_r\cdots j_1}\vert_N$ is a singleton, 
    \begin{equation*}
        Z_N(\mathbf{v}^{(r-1)})=|(\tau_{1}\vert_N)^{-1}(\mathbf{v}^{(r-1)})|\leq \sum_{j_r:V_{j_r}\vert_N=\{\mathbf{v}^{(1)}\}}\cdots\sum_{j_2:V_{j_r\cdots j_2}\vert_N=\{\mathbf{v}^{(r-1)}\}} n_1(j_2)
    \end{equation*}
    and thus
    \begin{equation*}
    \begin{split}
        Z_N(\mathbf{v}^{(r-2)})&=\sum_{\mathbf{v}^{(r-1)}\in(\tau_{2}\vert_N)^{-1}(\mathbf{v}^{(r-2)})}  |(\tau_{1}\vert_N)^{-1}(\mathbf{v}^{(r-1)})|^{a_1} \\
        &\leq \sum_{j_r:V_{j_r}\vert_N=\{\mathbf{v}^{(1)}\}}\cdots\sum_{j_3:V_{j_r\cdots j_3}\vert_N=\{\mathbf{v}^{(r-2)}\}} \sum_{\mathbf{v}^{(r-1)}\in(\tau_{2}\vert_N)^{-1}(\mathbf{v}^{(r-2)})} \sum_{j_2:V_{j_r\cdots j_2}\vert_N=\{\mathbf{v}^{(r-1)}\}} n_1(j_2)^{a_1},
    \end{split}
    \end{equation*}
    where we have used an elementary inequality: $(x+y)^a\leq x^a+y^a$ for all $x,y\geq 0$ and $0\leq a\leq 1$.
    Furthermore, one has
    \begin{equation*}
        \begin{split}
            &Z_N(\mathbf{v}^{(r-3)})=\sum_{\mathbf{v}^{(r-2)}\in(\tau_{3}\vert_N)^{-1}(\mathbf{v}^{(r-3)})} {Z_N(\mathbf{v}^{(r-2)})}^{a_2}\leq \sum_{j_r:V_{j_r}\vert_N=\{\mathbf{v}^{(1)}\}}\cdots \\
            &\sum_{\mathbf{v}^{(r-2)}\in(\tau_{3}\vert_N)^{-1}(\mathbf{v}^{(r-3)})} \left(  \sum_{j_3:V_{j_r\cdots j_3}\vert_N=\{\mathbf{v}^{(r-2)}\}} \sum_{\mathbf{v}^{(r-1)}\in(\tau_{2}\vert_N)^{-1}(\mathbf{v}^{(r-2)})} \sum_{j_2:V_{j_r\cdots j_2}\vert_N=\{\mathbf{v}^{(r-1)}\}} n_1(j_2)^{a_1} \right)^{a_2}.
        \end{split}
    \end{equation*}
    Repeating the above calculation, we finally obtain that
    \begin{equation*}
        \begin{split}
            Z_N&\leq \sum_{\mathbf{v}^{(1)}\in\pi_1(\Omega)\vert_N} \sum_{j_r:V_{j_r}\vert_N=\{\mathbf{v}^{(1)}\}}
            \left( \sum_{\mathbf{v}^{(2)}\in(\tau_{r-1}\vert_N)^{-1}(\mathbf{v}^{(1)})} \sum_{j_{r-1}:V_{j_rj_{r-1}}\vert_N=\{\mathbf{v}^{(2)}\}} \right.\\
            &\left.\left( \cdots \left( \sum_{\mathbf{v}^{(r-1)}\in(\tau_{2}\vert_N)^{-1}(\mathbf{v}^{(r-2)})} 
            \sum_{j_2:V_{j_r\cdots j_2}\vert_N=\{\mathbf{v}^{(r-1)}\}} n_1(j_2)^{a_1} \right)^{a_2} \cdots \right)^{a_{r-2}} \right)^{a_{r-1}}\\
            &=\#_r^{\mathbf{a}}(\pi_1(\Omega),N,\varepsilon).
        \end{split}
    \end{equation*}
    Therefore
    \begin{equation*}
        \lim_{N\to\infty}\frac{Z_N}{N}\leq  \lim_{N\to\infty}\frac{\#_r^{\mathbf{a}}(\pi_1(\Omega),N,\varepsilon)}{N} \leq \htop^{\mathbf{a}}(\{(\pi_i(\Omega),\sigma)\}_{i=1}^r,\{\tau_i\}_{i=1}^{r-1}).
    \end{equation*}
\end{proof}

Remark that in most situations, it is difficult to give an explicit value of the weighted topological entropy using the formula obtained by Lemma \ref{Formula for the weighted topological entropy} above. However, here we discuss a special case for which the weighted topological entropy is expressed in terms of combinatorial standard topological entropy, which further yields the coincidence of the metric mean dimension and the mean Hausdorff dimension for certain sponge systems.
\begin{lem}\label{coincidence}
    Let $(X_{\Omega},\sigma,d)$ be a sponge system defined in Section \ref{defn:sponge system}. Assume that for every $1\leq i\leq r-1$ and $N\in\N$, there exists a sequence $\left(P_N^{(r-i+1)}\right)_{N\in\N}$ satisfying the following uniform convergence\footnote{This condition was proposed by Alibabaei (\cite[Example 2.4]{Ali24a}) and $\Omega$ is said to have \textbf{uniformly growing word complexity}.} 
    \begin{equation}\label{uniformly growing condition}
        \lim_{N\to\infty}\left( \frac{|(\tau_i\vert_N)^{-1}(\mathbf{v}^{(r-i)})|}{P_N^{(r-i+1)}} \right)^{\frac{1}{N}}=1
    \end{equation}
    for any $\mathbf{v}^{(r-i)}\in\pi_i(\Omega)\vert_N$, where $\tau_i:(\pi_{r-i+1}(\Omega),\sigma)\to(\pi_{r-i}(\Omega),\sigma), 1\leq i\leq r-1$ are factor maps. Then $\mhdim(X_{\Omega},\sigma,d)=\mmdim(X_{\Omega},\sigma,d)$.
\end{lem}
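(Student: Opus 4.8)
The plan is to show that, under the uniformly growing word complexity hypothesis, the weighted topological entropy $\htop^{\mathbf{a}}(\{(\pi_i(\Omega),\sigma)\}_{i=1}^r,\{\tau_i\}_{i=1}^{r-1})$ collapses to the same ``separate suprema'' combination of standard topological entropies that appears in formula \eqref{mmdim of sponge system}, after the rescaling by $\log m_1$ coming from \eqref{mhdim of sponge system}. The starting point is Lemma \ref{Formula for the weighted topological entropy}: the weighted topological entropy equals $\lim_{N\to\infty}\frac{\log Z_N}{N}$, where $Z_N$ is built up from the fibre cardinalities $|(\tau_i\vert_N)^{-1}(\mathbf{v}^{(r-i)})|$ by the iterated ``raise to a power $a_j$ and sum over the fibre'' recipe. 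The hypothesis \eqref{uniformly growing condition} says precisely that these fibre cardinalities are, up to subexponential factors in $N$, independent of the base point $\mathbf{v}^{(r-i)}$, equal to some $P_N^{(r-i+1)}$.

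First I would feed this uniformity into the recursive definition of $Z_N$ one layer at a time. At the innermost level, $Z_N(\mathbf{v}^{(r-1)}) = |(\tau_1\vert_N)^{-1}(\mathbf{v}^{(r-1)})|$, which by \eqref{uniformly growing condition} with $i=1$ equals $P_N^{(r)} \cdot e^{o(N)}$ uniformly in $\mathbf{v}^{(r-1)}$. Then
\[
Z_N(\mathbf{v}^{(r-2)}) = \sum_{\mathbf{v}^{(r-1)}\in(\tau_2\vert_N)^{-1}(\mathbf{v}^{(r-2)})} Z_N(\mathbf{v}^{(r-1)})^{a_1}
= |(\tau_2\vert_N)^{-1}(\mathbf{v}^{(r-2)})| \cdot \bigl(P_N^{(r)}\bigr)^{a_1} e^{o(N)}
= P_N^{(r-1)} \bigl(P_N^{(r)}\bigr)^{a_1} e^{o(N)},
\]
again uniformly, where I have used \eqref{uniformly growing condition} with $i=2$. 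Iterating this — each step multiplies in a new factor $P_N^{(r-j)}$ and raises the previously accumulated product to the power $a_{j-1}$ — gives, after the final sum over $\mathbf{v}^{(1)}\in\pi_1(\Omega)\vert_N$ with exponent $a_{r-1}$,
\[
Z_N = |\pi_1(\Omega)\vert_N|^{?}\cdot\prod_{i=1}^{r}\bigl(P_N^{(i)}\bigr)^{c_i}\, e^{o(N)},
\]
where the exponents $c_i$ are products of the $a_j$'s; taking logarithms, dividing by $N$, and letting $N\to\infty$, the $e^{o(N)}$ factors disappear and one is left with a linear combination of the limits $\lim_N \frac{1}{N}\log P_N^{(i)}$. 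The final step is to identify these limits: since $P_N^{(i)}$ differs subexponentially from any fibre cardinality $|(\tau_{r-i+1}\vert_N)^{-1}(\cdot)|$, and $|\pi_{i-1}(\Omega)\vert_N| = |\pi_i(\Omega)\vert_N| \cdot P_N^{(i)} e^{o(N)}$ (the fibre cardinality times the base cardinality, up to subexponential error, summing the uniform fibre count over the base), one gets $\lim_N\frac{1}{N}\log P_N^{(i)} = \htop(\pi_{i-1}(\Omega),\sigma) - \htop(\pi_i(\Omega),\sigma)$ telescopically; substituting into the weighted-entropy expression and using the explicit form of $\mathbf{a}$ and of $\mathbf{w_a}$ recorded in the Remark after \eqref{variational principle} reproduces exactly $\log m_1 \cdot \mmdim(X_\Omega,\sigma,d)$, hence $\mhdim = \mmdim$.

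The main obstacle is bookkeeping rather than conceptual: one must track that the $e^{o(N)}$ errors really are uniform in the base points through every layer of the recursion — the number of summands at each level ($=$ a fibre cardinality) is itself only subexponential-away from being constant, so multiplying finitely many such uniform estimates and raising to bounded powers $a_j\in[0,1]$ keeps the error subexponential, but this needs to be said carefully. A cleaner packaging is to prove by downward induction on $i$ the statement ``$\lim_N\frac{1}{N}\log\bigl(\sup_{\mathbf{v}} Z_N(\mathbf{v}^{(r-i)})\bigr) = \lim_N\frac{1}{N}\log\bigl(\inf_{\mathbf{v}} Z_N(\mathbf{v}^{(r-i)})\bigr)$ and compute the common value,'' which sidesteps carrying explicit error terms. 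Once the weighted entropy is pinned to this value, comparing with the two boxed formulae in Theorem \ref{mdim of sponge system} and the alternative expressions in the Remark is purely algebraic, using only that $w_1 = \log m_1/\log m_r$, $w_i = \log m_1/\log m_{r-i+1} - \log m_1/\log m_{r-i+2}$, and the collapse of a simultaneous supremum to separate suprema when the relevant entropies are forced (by the uniformity) to be attained independently.
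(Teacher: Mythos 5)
Your proposal is correct and follows essentially the same route as the paper: the paper likewise invokes Lemma \ref{Formula for the weighted topological entropy}, uses the uniformity hypothesis to collapse the recursion to $Z_N = P_N^{(1)}(P_N^{(2)})^{a_{r-1}}\cdots(P_N^{(r)})^{a_1\cdots a_{r-1}}$ up to subexponential factors, telescopes the products $P_N^{(1)}\cdots P_N^{(i)}$ into $|\pi_i(\Omega)\vert_N|$, and matches the resulting weights $w_i$ against formula \eqref{mmdim of sponge system}. The only caveat is a minor index reversal in your telescoping identity ($P_N^{(j)}$ compares $|\pi_j(\Omega)\vert_N|$ to $|\pi_{j-1}(\Omega)\vert_N|$, not the other way around), which does not affect the argument.
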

\begin{proof}[Proof of Lemma \ref{coincidence} assuming Theorem \ref{mdim of sponge system} from Section \ref{defn:sponge system} above]
    Fix $N\in\N$. Let $P_N^{(1)}=\pi_1(\Omega)\vert_N$. Set $a_i=\log m_{r-i}/\log m_{r-i+1}$ for each $1\leq i\leq r-1$. Let $\mathbf{w_a}=(w_1,\ldots,w_r)$ be defined by \eqref{wa}. Then
    \begin{equation*}
        \mathbf{w_a}=\left( \frac{\log m_1}{\log m_r}, \frac{\log m_1}{\log m_{r-1}}-\frac{\log m_1}{\log m_r},\ldots, \frac{\log m_1}{\log m_2}-\frac{\log m_1}{\log m_3}, 1-\frac{\log m_1}{\log m_2} \right).
    \end{equation*}
    Combining Lemma \ref{Formula for the weighted topological entropy} and \eqref{uniformly growing condition}, we have
    \begin{equation*}
        \begin{split}
            &\htop^{\mathbf{a}}(\{(\pi_i(\Omega),\sigma)\}_{i=1}^r,\{\tau_i\}_{i=1}^{r-1})\\
            &=\lim_{N\to\infty}\frac{\log Z_N}{N}\\
            &=\lim_{N\to\infty}\frac{1}{N} \log \left( P_N^{(1)} {P_N^{(2)}}^{a_{r-1}}\cdots{P_N^{(r-1)}}^{a_2a_3\ldots a_{r-1}} {P_N^{(r)}}^{a_1a_2\ldots a_{r-1}}\right) \\
            &=\lim_{N\to\infty}\frac{1}{N}\left(\log\left(P_N^{(1)} \cdots P_N^{(r)}\right)^{a_1\ldots a_{r-1}} 
            +\sum\limits_{i=2}^{r-1} \log\left( P_N^{(1)} \cdots P_N^{(i)} \right)^{(1-a_{r-i})a_{r-i+1}\cdots a_{r-1}}    + \log \left(P_N^{(1)}\right)^{1-a_{r-1}} \right)\\
            &=\lim_{N\to\infty}\frac{1}{N}\left(\log\left(P_N^{(1)} \cdots P_N^{(r)}\right)^{w_1} 
            +\sum\limits_{i=1}^{r-1} \log\left( P_N^{(1)} \cdots P_N^{(i)} \right)^{w_{r-i+1}}     \right)\\
            &=\lim_{N\to\infty}\frac{1}{N} \left( w_1\log|\Omega\vert_N| + \sum\limits_{i=1}^{r-1}w_{r-i+1} \log |\pi_i(\Omega)\vert_N| \right)\\
            &=w_1\htop(\Omega,\sigma)+\sum\limits_{i=1}^{r-1}w_{r-i+1}\htop(\pi_i(\Omega),\sigma).
        \end{split}
    \end{equation*}
    By Theorem \ref{mdim of sponge system}, we further have
    \begin{equation*}
        \begin{split}
            \mhdim(X_{\Omega},\sigma,d)
            &=\frac{\htop^{\mathbf{a}}(\{(\pi_i(\Omega),\sigma)\}_{i=1}^r,\{\tau_i\}_{i=1}^{r-1})}{\log m_1} \\
            &=\frac{\htop(\Omega,\sigma)}{\log m_r}+\sum\limits_{i=1}^{r-1}\left(\frac{1}{\log m_{i}}-\frac{1}{\log m_{i+1}}\right)\htop(\pi_i(\Omega),\sigma)\\
            &=\mmdim(X_{\Omega},\sigma,d).
        \end{split}
    \end{equation*}
\end{proof}

\subsection{Preparations}
In this section, we collect some basic tools that will be used to calculate the mean Hausdorff dimension of the sponge systems.

First, we recall a fundamental result of geometric measure theory by Tsukamoto \cite[Lemma 5.7]{Tsu22}.
\begin{lem}\label{Hausdorff dim estimate}
    Let $c,\varepsilon,s$ be positive constants. Let $(X,d)$ be a compact metrizable space with a Borel probability measure $\mu$. Suppose that
    \begin{enumerate}[$(1)$]
        \item $6\varepsilon^c<1$.
        \item For any $x\in X$ there exists a Borel subset $A\subset X$ containing $x$ and satisfying 
        \begin{equation*}
            0<\diam A<\frac{\varepsilon}{6}, \mu(A)\geq (\diam A)^s.
        \end{equation*}
    \end{enumerate}
    Then $\hdim(X,d,\varepsilon)\leq (1+c)s$.
\end{lem}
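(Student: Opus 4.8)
The plan is to apply the mass distribution principle at the fixed scale $\varepsilon$, reducing everything to a single Hausdorff-content estimate. First I would observe that it suffices to prove $\mathcal{H}_{\varepsilon}^{(1+c)s}(X,d)<1$: since every admissible covering set has diameter $<\varepsilon<1$ (the bound $\varepsilon<1$ being forced by hypothesis $(1)$), the quantity $\mathcal{H}_{\varepsilon}^{t}(X,d)$ is non-increasing in $t>0$, so $\mathcal{H}_{\varepsilon}^{(1+c)s}(X,d)<1$ gives $\mathcal{H}_{\varepsilon}^{t}(X,d)<1$ for all $t\geq(1+c)s$, whence $\hdim(X,d,\varepsilon)=\sup\{t\geq 0:\mathcal{H}_{\varepsilon}^{t}(X,d)\geq 1\}\leq(1+c)s$.

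To obtain an economical cover I would run a Vitali-type argument on the family $\mathcal{F}=\{A_{x}:x\in X\}$, where for each $x$ one fixes a Borel set $A_{x}\ni x$ supplied by hypothesis $(2)$; all members of $\mathcal{F}$ have diameter in $(0,\varepsilon/6)$, hence uniformly bounded. Partitioning $\mathcal{F}$ into dyadic diameter scales and extracting, scale by scale, maximal disjoint subfamilies via Zorn's lemma, one gets a pairwise disjoint $\mathcal{G}\subseteq\mathcal{F}$ with the property that every $A\in\mathcal{F}$ meets some $A'\in\mathcal{G}$ with $\diam A'\geq\frac{1}{2}\diam A$. For such a pair, and any fixed $z_{A'}\in A'$, the triangle inequality yields $A\subseteq\overline{B}(z_{A'},3\diam A')$, since a point of $A$ lies within $\diam A\leq 2\diam A'$ of a point of $A\cap A'$, which lies within $\diam A'$ of $z_{A'}$. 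Consequently the balls $\overline{B}(z_{A'},3\diam A')$, $A'\in\mathcal{G}$, cover $X$ (each $x$ lies in $A_{x}$, which meets some $A'$), each has diameter $\leq 6\diam A'<\varepsilon$, so they form an admissible family for $\mathcal{H}_{\varepsilon}^{(1+c)s}$; and because the $A'$ are disjoint Borel sets, $\sum_{A'\in\mathcal{G}}\mu(A')\leq\mu(X)=1$, which in particular forces $\mathcal{G}$ to be countable since $\mu(A')\geq(\diam A')^{s}>0$.

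Then I would estimate the corresponding sum. Using $\diam\overline{B}(z_{A'},3\diam A')\leq 6\diam A'$, the factorisation $(\diam A')^{(1+c)s}=(\diam A')^{cs}(\diam A')^{s}$, the bound $(\diam A')^{cs}\leq(\varepsilon/6)^{cs}$ (valid as $0<\diam A'<\varepsilon/6<1$ and $cs>0$), and hypothesis $(2)$ in the form $(\diam A')^{s}\leq\mu(A')$, one arrives at
\[
\mathcal{H}_{\varepsilon}^{(1+c)s}(X,d)\leq\sum_{A'\in\mathcal{G}}\bigl(6\diam A'\bigr)^{(1+c)s}\leq 6^{(1+c)s}\Bigl(\tfrac{\varepsilon}{6}\Bigr)^{cs}\sum_{A'\in\mathcal{G}}\mu(A')\leq 6^{s}\varepsilon^{cs}=(6\varepsilon^{c})^{s}<1,
\]
the final inequality being hypothesis $(1)$ with $s>0$. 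This is exactly the estimate isolated at the outset.

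The computation is routine; the main obstacle will be the covering step, and in particular the matching of the constants. Hypothesis $(2)$ asks for $\diam A<\varepsilon/6$ precisely so that the factor $2$ lost in replacing $A$ by a ball and the factor $3$ lost to the half-diameter disjointification multiply to $6$, keeping the enlarged sets strictly below scale $\varepsilon$; likewise it is the surviving exponent $cs$ that allows the prefactor $6^{(1+c)s}$ to collapse to $6^{s}$ and reproduce $(6\varepsilon^{c})^{s}$. I would also take care to note that $\mathcal{G}$ can be taken countable — which is automatic from disjointness and $\mu$ being a probability measure — since the Hausdorff premeasure is built from countable covers.
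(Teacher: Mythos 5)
Your proof is correct. The paper itself does not prove this lemma --- it quotes it as \cite[Lemma 5.7]{Tsu22} --- and your argument (reduce to showing $\mathcal{H}^{(1+c)s}_{\varepsilon}(X,d)<1$ via monotonicity in the exponent for covers of diameter $<\varepsilon<1$, then run a Vitali-type disjointification of the family $\{A_x\}$, enlarge to balls of diameter $\le 6\diam A'<\varepsilon$, and use disjointness plus $\mu(A')\ge(\diam A')^s$ to bound the sum by $(6\varepsilon^c)^s<1$) is essentially the same mass-distribution argument used in the cited source. The constant bookkeeping checks out: $6^{(1+c)s}(\varepsilon/6)^{cs}=6^s\varepsilon^{cs}=(6\varepsilon^c)^s$, and the countability of the disjoint subfamily, needed because Hausdorff content is defined via countable covers, does follow from $\mu(A')>0$ and $\sum\mu(A')\le 1$ as you note.
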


The next combinatorial lemma is obtained by Kenyon and Peres in \cite[Lemma 4.1]{KP96}.
\begin{lem}\label{KP96,Lemma4.1}
    Fix $r\in\N$. Let $u_i:\N\to\R \ (i=1,\ldots,r)$ satisfy
    \begin{equation*}
        \sup_{n\in\N}|u_i(n+1)-u_i(n)|<\infty.
    \end{equation*}
    Then for any positive numbers $c_1,\ldots,c_{r}$ and $L_1,\ldots,L_{r}$,
    \begin{equation*}
        \limsup_{t\to\infty}\frac{1}{t}\sum_{i=1}^r\left\{c_iu_i\left(\left\lfloor \frac{t}{L_i} \right\rfloor\right)-u_j\left(\left\lfloor \frac{c_it}{L_i} \right\rfloor\right)\right\}\geq 0.
    \end{equation*}
\end{lem}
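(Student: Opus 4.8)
The statement is Lemma \ref{KP96,Lemma4.1}, a purely combinatorial/analytic inequality about functions $u_i:\N\to\R$ with bounded increments (note: the index $j$ on the right-hand side is surely a typo for $i$, so I read the summand as $c_iu_i(\lfloor t/L_i\rfloor)-u_i(\lfloor c_it/L_i\rfloor)$). The key observation is that it suffices to prove the $r=1$ case for each $i$ separately: if we can show $\limsup_{t\to\infty}\frac{1}{t}\bigl(c_iu_i(\lfloor t/L_i\rfloor)-u_i(\lfloor c_it/L_i\rfloor)\bigr)\geq 0$ for every single $i$, we do not immediately get the sum statement because a $\limsup$ of a sum is not the sum of $\limsup$s. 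So instead I would prove the \emph{stronger} assertion that for \emph{every} $i$ the limit is exactly $0$ along a fixed sequence, or better, that the Cesàro-type average along a geometric sequence forces the liminf (not just limsup) of each term to be nonnegative. Concretely, I would argue: for a single function $u$ with $\sup_n|u(n+1)-u(n)|=:D<\infty$, and positive constants $c,L$, one has
\begin{equation*}
    \limsup_{t\to\infty}\frac{1}{t}\left(c\,u\!\left(\left\lfloor \tfrac{t}{L}\right\rfloor\right)-u\!\left(\left\lfloor \tfrac{ct}{L}\right\rfloor\right)\right)\geq 0 .
\end{equation*}
Granting this for each $i$, I then need a genuine argument that the sum's $\limsup$ is $\geq 0$, which I address below.

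**The single-function case.** Write $f(t)=u(\lfloor t/L\rfloor)$. Since $u$ has increments bounded by $D$, one checks $|u(m)-u(n)|\le D|m-n|$, hence $|f(t)|\le |u(0)|+ (D/L)\,t + D$, i.e. $f$ has at most linear growth; more importantly, $|f(s)-f(t)|\le (D/L)|s-t|+D$ for all $s,t\ge 0$. The quantity of interest is $g(t):=c\,u(\lfloor t/L\rfloor)-u(\lfloor ct/L\rfloor)=c f(t)-f(ct)+O(D)$, where the $O(D)$ accounts for the floor discrepancy $|u(\lfloor ct/L\rfloor)-u(\lfloor t/L\rfloor\cdot c)|$ which is $O(D)$ uniformly. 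Now suppose, for contradiction, that $\limsup_{t\to\infty}g(t)/t=-2\delta<0$. Then there is $T_0$ with $cf(t)-f(ct)\le -\delta t$ for all $t\ge T_0$. Iterating with $t,ct,c^2t,\dots$ (say $c>1$; the case $c<1$ is symmetric by running the iteration backwards, and $c=1$ is trivial since then $g(t)=O(D)$): from $cf(c^k t)\le f(c^{k+1}t)-\delta c^k t$ we telescope $c^{n}f(t)\le f(c^n t) - \delta t\sum_{k=0}^{n-1}c^{2k}\cdot c^{-k}\cdots$ — I would set it up so that dividing by $c^n$ gives $f(t)\le c^{-n}f(c^n t)-\delta t\sum_{k}c^{-k}$, and since $c^{-n}f(c^nt)=c^{-n}O(c^n t)=O(t)$ stays bounded, letting $n\to\infty$ forces $f(t)=-\infty$, a contradiction. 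This is the standard Kenyon--Peres telescoping trick.

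**Passing from one function to the sum.** The remaining issue is that $\limsup(\sum g_i)$ need not be $\sum\limsup g_i$. The resolution: I claim each $g_i(t)/t$ actually has a \emph{limit} equal to $0$ along the geometric sequence $t_n=c^n$ (or a suitable common geometric progression), NOT just a nonnegative $\limsup$; indeed by the argument above $\limsup g_i(t)/t\ge 0$, but also a symmetric application (swap the roles, using $c^{-1}$ or equivalently the identity $c\cdot\frac{1}{c}=1$) gives $\liminf g_i(t)/t\le 0$ is the wrong direction — so I cannot conclude a limit. Therefore the correct route, which is what Kenyon--Peres actually do, is: fix $\delta>0$ and suppose $\limsup_{t\to\infty}\frac1t\sum_i g_i(t)<-\delta$; then $\sum_i g_i(t)\le -\delta t$ eventually, and apply a \emph{simultaneous} telescoping. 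With $c_i>1$ say (general case by splitting into $c_i>1$, $c_i=1$, $c_i<1$ groups and treating the $c_i<1$ group via the inverse iteration), run the substitution $t\mapsto \lambda t$ for a well-chosen common ratio $\lambda$ and sum the resulting inequalities; because each $f_i$ has linear growth, each rescaled term $\lambda^{-n}f_i(\lambda^n t)$ stays bounded, while the accumulated $\delta$-terms diverge, giving the contradiction. The \textbf{main obstacle} is precisely engineering this simultaneous telescoping when the $c_i$ and $L_i$ are unrelated: one must choose the iteration ratio $\lambda$ (and possibly pass to a sparse subsequence of times) so that $\lfloor c_i t/L_i\rfloor$ and $\lfloor t/L_i\rfloor$ line up across iterates for all $i$ at once, up to $O(D_i)$ errors that are harmless after dividing by $t\to\infty$. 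I expect this bookkeeping — not any deep idea — to be the only real work; everything else is the bounded-increment estimate $|u_i(m)-u_i(n)|\le D_i|m-n|$ and the geometric-series telescoping.
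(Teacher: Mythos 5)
The paper itself gives no proof of this lemma (it is quoted directly from Kenyon--Peres), so I can only judge your argument on its merits. Your single-function case is essentially sound: with $f(t)=u(\lfloor t/L\rfloor)$, the hypothesis $cf(t)-f(ct)\le-\delta t$ for $t\ge T_0$ gives, after dividing by $c^{k+1}$ and telescoping, $f(t)\le c^{-n}f(c^nt)-n\delta t/c$; note the accumulated term is $n\delta t/c$, not the convergent series $\delta t\sum_kc^{-k}$ you wrote --- with your formula the right-hand side stays bounded and there is no contradiction, so this needs fixing even in the easy case. The cases $c<1$ (iterate at $t/c^k\to\infty$) and $c=1$ (the summand is identically $0$) go through as you indicate.

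The genuine gap is the passage to the sum, which you correctly flag but wrongly dismiss as bookkeeping. A ``simultaneous telescoping'' with a common ratio $\lambda$ cannot be engineered: the telescoping for the $i$-th term requires evaluating $f_i$ along the geometric progression $t,c_it,c_i^2t,\ldots$ of ratio \emph{exactly} $c_i$, and for multiplicatively unrelated $c_i$ these progressions share only the initial point, so no choice of $\lambda$ or of a sparse subsequence of times makes the terms ``line up'' even up to $O(1)$ errors. The idea you are missing is to replace the discrete geometric sampling by a logarithmic average over a multiplicative window. Writing $g_i(t)=c_if_i(t)-f_i(c_it)$ with $f_i(t)=u_i(\lfloor t/L_i\rfloor)$, the substitution $t\mapsto c_it$ preserves the measure $dt/t$, so
\begin{equation*}
\int_T^{\Lambda T}g_i(t)\,\frac{dt}{t^2}
= c_i\left(\int_T^{c_iT}-\int_{\Lambda T}^{c_i\Lambda T}\right)\frac{f_i(s)}{s^2}\,ds = O(1)
\end{equation*}
uniformly in $T\ge1$ and $\Lambda$, because $|f_i(s)|/s$ is bounded and each boundary interval has logarithmic length $|\log c_i|$. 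Summing over $i$ gives a uniform $O(1)$ bound for $\int_T^{\Lambda T}\sum_ig_i(t)\,t^{-2}\,dt$, whereas $\sum_ig_i(t)\le-\delta t$ for all $t\ge T$ would force this integral to be $\le-\delta\log\Lambda\to-\infty$. This averaging step is the entire content of the lemma in the multi-function case; your single-function telescoping is precisely its discretization along the orbit of $t\mapsto c_it$, which is exactly why it does not extend to several unrelated ratios. Without it the proof is incomplete.
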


\begin{lem}\cite[Lemma 5.9 (1)]{Tsu22}\label{Lemma5.9(1)}
    Let $\eta_1,\eta_2,\eta_3,\ldots$ be independent and identically distributed (i.i.d.) random variables with $\eta_1\in L^2$. Then for any $\delta>0$ and $m\in\N$
    \begin{equation*}
        \mathbb{P}\left(\sup\limits_{n\geq m} \left| \frac{\sum_{k=1}^{n}\eta_k}{n}-\mathbb{E}\eta_1 \right|>\delta \right)\leq\frac{4\mathbb{V}(\eta_1)}{\delta^2 m}.
    \end{equation*}
\end{lem}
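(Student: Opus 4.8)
Since subtracting the constant $\mathbb{E}\eta_1$ from each $\eta_k$ changes neither $\sup_{n\geq m}\bigl|\tfrac1n\sum_{k=1}^n\eta_k-\mathbb{E}\eta_1\bigr|$ nor $\mathbb{V}(\eta_1)$, I would first reduce to the case $\mathbb{E}\eta_1=0$ and write $S_n=\sum_{k=1}^n\eta_k$, so that the quantity to be estimated is $\mathbb{P}\bigl(\sup_{n\geq m}|S_n|/n>\delta\bigr)$. The plan is then to chop the index range $\{n\geq m\}$ into the dyadic blocks $B_j=\{\,n\in\mathbb{N}:2^j m\leq n<2^{j+1}m\,\}$ for $j=0,1,2,\ldots$, which cover $\{n\geq m\}$, and to control the supremum over each block separately.

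On $B_j$, the event $|S_n|/n>\delta$ forces $|S_n|>\delta\,2^j m$ because $n\geq 2^j m$ there; hence the contribution of $B_j$ is at most $\mathbb{P}\bigl(\max_{1\leq n<2^{j+1}m}|S_n|>\delta\,2^j m\bigr)$. Now I would invoke Kolmogorov's maximal inequality for sums of independent mean-zero $L^2$ random variables, which gives
\begin{equation*}
    \mathbb{P}\Bigl(\max_{1\leq n<2^{j+1}m}|S_n|>\delta\,2^j m\Bigr)\leq\frac{\mathbb{V}\bigl(S_{2^{j+1}m-1}\bigr)}{(\delta\,2^j m)^2}\leq\frac{2^{j+1}m\,\mathbb{V}(\eta_1)}{\delta^2\,2^{2j}m^2}=\frac{2\,\mathbb{V}(\eta_1)}{\delta^2\,2^j m}.
\end{equation*}
Summing this over $j\geq 0$ by the union bound yields $\mathbb{P}\bigl(\sup_{n\geq m}|S_n|/n>\delta\bigr)\leq\sum_{j\geq0}\tfrac{2\mathbb{V}(\eta_1)}{\delta^2 2^j m}=\tfrac{4\mathbb{V}(\eta_1)}{\delta^2 m}$, which is exactly the asserted bound.

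There is no serious obstacle here; the only points requiring care are the bookkeeping in the dyadic decomposition and the verification that $\max$ over $B_j$ is dominated by $\max$ over all $n<2^{j+1}m$ so that Kolmogorov's inequality applies. An alternative, even slicker, route would be to observe that $(S_n/n)_{n\geq1}$ is a backward (reversed) martingale with respect to the decreasing filtration $\mathcal{G}_n=\sigma(S_n,S_{n+1},\ldots)$ and to apply Doob's $L^2$ maximal inequality for reversed martingales directly, which gives $\mathbb{P}\bigl(\sup_{n\geq m}|S_n|/n>\delta\bigr)\leq\delta^{-2}\mathbb{E}\bigl[(S_m/m)^2\bigr]=\mathbb{V}(\eta_1)/(\delta^2 m)$, a stronger statement; but since only the constant $4$ is needed downstream, the elementary dyadic argument above suffices.
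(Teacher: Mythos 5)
Your proof is correct, and it is the standard argument for this bound: the paper itself gives no proof but simply cites \cite[Lemma 5.9(1)]{Tsu22}, where the same dyadic-block decomposition combined with Kolmogorov's maximal inequality is used, the constant $4$ arising exactly as in your computation from the factor $2^{j+1}$ and the geometric sum $\sum_{j\ge 0}2^{-j}=2$. Your remark that the reverse-martingale maximal inequality yields the sharper constant $1$ is also accurate, though unnecessary for the application.
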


The following lemma is a slight variant of  \cite[Lemma 5.9 (3)]{Tsu22} essentially but we include the details here for completeness.
\begin{lem}\label{Lemma5.9}
    Let $r\in\N$ and $0<c_i,\widehat{c}_i\leq 1$ for all $1\leq i\leq r$. For any positive constants $\delta$ and $C$, there exists $M_0\in\N$ depending on $c_i,\widehat{c}_i,i=1,\ldots,r,C$ and $\delta$ for which the following statement holds: If $\xi_{11},\xi_{12},\ldots,\xi_{1r},\xi_{21},\xi_{22},\ldots,\xi_{2r},\ldots$ are i.i.d. random variables with $\xi_{11}\in L^2, \max\{|\mathbb{E}\xi_{11}|,\mathbb{V}\xi_{11}\}\leq C$, then
    \begin{equation*}
        \mathbb{P}\left( \sup_{M\geq M_0}\sum_{i=1}^{r}\left|  \frac{\sum_{k=1}^{\lfloor c_iM \rfloor}\xi_{ki}}{c_iM}
        -\frac{\sum_{k=1}^{\lfloor \widehat{c}_iM \rfloor}\xi_{ki}}{\widehat{c}_iM}  \right|\leq\delta \right)\geq\frac{1}{2}.
    \end{equation*}
\end{lem}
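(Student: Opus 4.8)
The plan is to reduce the statement to a single application of Lemma~\ref{Lemma5.9(1)}. The key observation is that each summand
$$\left| \frac{\sum_{k=1}^{\lfloor c_iM \rfloor}\xi_{ki}}{c_iM}-\frac{\sum_{k=1}^{\lfloor \widehat{c}_iM \rfloor}\xi_{ki}}{\widehat{c}_iM}\right|$$
can be bounded, via the triangle inequality, by the sum of two deviations from the common mean $\mathbb{E}\xi_{11}$. More precisely, if we set $n_1 = \lfloor c_iM\rfloor$ and write $S_n = \sum_{k=1}^n \xi_{ki}$, then $\bigl|\tfrac{S_{n_1}}{c_iM} - \tfrac{S_{n_2}}{\widehat c_i M}\bigr| \le \bigl|\tfrac{S_{n_1}}{c_iM} - \mathbb{E}\xi_{11}\bigr| + \bigl|\tfrac{S_{n_2}}{\widehat c_iM} - \mathbb{E}\xi_{11}\bigr|$, and since $n_1 = \lfloor c_iM\rfloor$ and $c_i M$ differ by at most $1$, the quantity $\bigl|\tfrac{S_{n_1}}{c_iM} - \tfrac{S_{n_1}}{n_1}\bigr|$ is controlled by $\tfrac{|S_{n_1}|}{n_1}\cdot\tfrac{1}{c_iM} \le \tfrac{1}{n_1}(|\tfrac{S_{n_1}}{n_1}-\mathbb{E}\xi_{11}| + C)$, which is small once $M$ (hence $n_1$) is large.

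First I would fix $i$ and handle one index at a time. I would choose a threshold $m_0 = m_0(\delta, C, r)$ so that for all $n\ge m_0$ the rounding error terms above are at most $\delta/(8r)$, say; this requires a preliminary bound on $\sup_{n\ge m_0}|S_n/n - \mathbb{E}\xi_{11}|$, which Lemma~\ref{Lemma5.9(1)} itself provides with probability close to $1$. Then I would pick $M_0$ large enough that $\lfloor c_i M_0\rfloor \ge m_0$ and $\lfloor \widehat c_i M_0\rfloor\ge m_0$ for every $i$ (possible since all $c_i,\widehat c_i>0$). On the event
$$E_i = \left\{\sup_{n\ge m_0}\left|\frac{\sum_{k=1}^n\xi_{ki}}{n}-\mathbb{E}\xi_{11}\right|\le \frac{\delta}{4r}\right\},$$
for each $i$, and for all $M\ge M_0$, the $i$-th summand is at most $\delta/r$, so on $\bigcap_{i=1}^r E_i$ the full sum over $i$ is at most $\delta$ for all $M\ge M_0$. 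Applying Lemma~\ref{Lemma5.9(1)} with tolerance $\delta/(4r)$ gives $\mathbb{P}(E_i^c)\le \frac{4\mathbb{V}(\xi_{11})\cdot 16 r^2}{\delta^2 m_0}\le \frac{64Cr^2}{\delta^2 m_0}$, and I would enlarge $m_0$ (hence $M_0$) so that this is at most $\tfrac{1}{2r}$; a union bound then yields $\mathbb{P}(\bigcap_i E_i)\ge 1/2$, which is exactly the claim.

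The step requiring the most care is making the bookkeeping of constants uniform: the threshold $m_0$ must be chosen to simultaneously (a) kill the rounding/discretization errors coming from replacing $n$ by $c_iM$ in the denominators, which depend on the crude a priori bound $|S_n/n|\le |{\mathbb{E}}\xi_{11}| + |S_n/n - \mathbb{E}\xi_{11}| \le C + 1$ valid on $E_i$, and (b) make the failure probability from Lemma~\ref{Lemma5.9(1)} summable to below $1/2$. Both are monotone in $m_0$, so one first picks $m_0$ for (a), then possibly increases it for (b), then sets $M_0 = \lceil m_0 / \min_i\min(c_i,\widehat c_i)\rceil + 1$. Since $\xi_{11}\in L^2$ with $\mathbb{V}\xi_{11}\le C$, none of these choices depends on the particular distribution of the $\xi$'s beyond $c_i,\widehat c_i,C,\delta,r$, which is precisely what the statement demands. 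The only genuine input is Lemma~\ref{Lemma5.9(1)} (a maximal inequality of Kolmogorov/Hájek--Rényi type); everything else is deterministic manipulation on the good event.
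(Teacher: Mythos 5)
Your proposal is correct and follows essentially the same route as the paper's proof: apply Lemma~\ref{Lemma5.9(1)} with tolerance $\delta/(4r)$, pass through the common mean $\mathbb{E}\xi_{11}$ by the triangle inequality, control the discrepancy between dividing by $\lfloor c_iM\rfloor$ and by $c_iM$ using $|\mathbb{E}\xi_{11}|\leq C$ and the largeness of $M_0$, and finish with a union bound. The only (harmless) difference is bookkeeping: you use a single event $E_i$ per index covering both $c_i$ and $\widehat{c}_i$ with failure probability $1/(2r)$, whereas the paper intersects two events of probability $\geq 3/4$ (one for the $c_i$ family, one for the $\widehat{c}_i$ family).
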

\begin{proof}
    Fix $1\leq i\leq r$. Take $M'_0\in\N$ with $\frac{4C}{(\delta/4r)^2 M'_0}\leq 1/4r$.  
    By Lemma \ref{Lemma5.9(1)}, 
    \begin{equation*}
        \mathbb{P}\left( \sup_{M\geq M'_0} \left| \frac{\sum_{k=1}^M\xi_{ki}}{M}-\mathbb{E}\xi_{11} \right|\geq\frac{\delta}{4r} \right)\leq\frac{1}{4r}.
    \end{equation*}
    Choose $M_0\in\N$ satisfying
    \begin{equation*}
        M_0\geq\max_{1\leq i\leq r}\max\left\{ \frac{M'_0}{c_i},\frac{M'_0}{\widehat{c}_i},\frac{4rC}{c_i\delta},\frac{4rC}{\widehat{c}_i\delta} \right\}.
    \end{equation*}
    For all $M\geq M_0$, we have $\lfloor c_iM \rfloor\geq M'_0$ and therefore
    \begin{equation*}
        \mathbb{P}\left( \sup_{M\geq M_0} \left| \frac{\sum_{k=1}^{\lfloor c_iM \rfloor}\xi_{ki}}{\lfloor c_iM \rfloor}-\mathbb{E}\xi_{11} \right|\geq\frac{\delta}{4r} \right)\leq\frac{1}{4r}.
    \end{equation*}
    Notice that
    \begin{equation*}
        \begin{split}
            \left\{ \sup_{M\geq M_0}\sum_{i=1}^{r}\left| \frac{\sum_{k=1}^{\lfloor c_iM \rfloor}\xi_{ki}}{\lfloor c_iM \rfloor}
        -\mathbb{E}\xi_{11} \right|\geq\frac{\delta}{4} \right\}
        \subset\bigcup_{i=1}^{r}\left\{ \sup_{M\geq M_0} \left| \frac{\sum_{k=1}^{\lfloor c_iM \rfloor}\xi_{ki}}{\lfloor c_iM \rfloor}-\mathbb{E}\xi_{11} \right|\geq\frac{\delta}{4r} \right\}.
        \end{split}
    \end{equation*}
    Then
    \begin{equation*}
        \mathbb{P}\left( \sup_{M\geq M_0}\sum_{i=1}^{r}\left| \frac{\sum_{k=1}^{\lfloor c_iM \rfloor}\xi_{ki}}{\lfloor c_iM \rfloor}
        -\mathbb{E}\xi_{11} \right|\geq\frac{\delta}{4} \right)\leq\frac{1}{4},
    \end{equation*}
    and thus
    \begin{equation*}
        \mathbb{P}\left( \sup_{M\geq M_0}\sum_{i=1}^{r}\left|\frac{\sum_{k=1}^{\lfloor c_iM \rfloor}\xi_{ki}}{\lfloor c_iM \rfloor}
        -\mathbb{E}\xi_{11} \right|\leq\frac{\delta}{4} \right)\geq\frac{3}{4}.
    \end{equation*}
    Straightforward calculation shows that
    \begin{equation*}
        \left| \frac{\sum_{k=1}^{\lfloor c_iM \rfloor}\xi_{ki}}{c_iM}-\mathbb{E}\xi_{11} \right| \leq 
        \left| \frac{\sum_{k=1}^{\lfloor c_iM \rfloor}\xi_{ki}}{\lfloor c_iM \rfloor}-\mathbb{E}\xi_{11} \right| + \frac{C}{c_i M}
    \end{equation*}
    for each $i$. Indeed,
    \begin{equation*}
        \begin{split}
            \left| \frac{\sum_{k=1}^{\lfloor c_iM \rfloor}\xi_{ki}}{c_iM}-\mathbb{E}\xi_{11} \right|
            &=\left| \frac{\lfloor c_iM \rfloor}{c_iM}\left( \frac{\sum_{k=1}^{\lfloor c_iM \rfloor}\xi_{ki}}{\lfloor c_iM \rfloor}-\mathbb{E}\xi_{11} \right) + \left( \frac{\lfloor c_iM \rfloor}{c_iM}-1\right)\mathbb{E}\xi_{11} \right|\\
            &\leq \left| \frac{\sum_{k=1}^{\lfloor c_iM \rfloor}\xi_{ki}}{\lfloor c_iM \rfloor}-\mathbb{E}\xi_{11} \right| +\frac{c_iM-\lfloor c_iM \rfloor}{c_iM}|\mathbb{E}\xi_{11}|
        \end{split}
    \end{equation*}
    and $|\mathbb{E}\xi_{11}|\leq C$.
    Since $M\geq M_0$ implies that $rC/c_iM\leq \delta/4$, we have
    \begin{equation*}
        \sup_{M\geq M_0} \sum_{i=1}^{r}\left| \frac{\sum_{k=1}^{\lfloor c_iM \rfloor}\xi_{ki}}{c_iM}-\mathbb{E}\xi_{11} \right| \leq 
        \sup_{M\geq M_0} \sum_{i=1}^{r}\left| \frac{\sum_{k=1}^{\lfloor c_iM \rfloor}\xi_{ki}}{\lfloor c_iM \rfloor}-\mathbb{E}\xi_{11} \right| +\frac{\delta}{4}.
    \end{equation*}
    Furthermore, we have
    \begin{equation*}
        \mathbb{P}\left( \sup_{M\geq M_0} \sum_{i=1}^{r}\left| \frac{\sum_{k=1}^{\lfloor c_iM \rfloor}\xi_{ki}}{c_iM}-\mathbb{E}\xi_{11} \right|\leq\frac{\delta}{2} \right)\geq\frac{3}{4}.
    \end{equation*}
    Similarly, we obtain that
    \begin{equation*}
        \mathbb{P}\left( \sup_{M\geq M_0} \sum_{i=1}^{r} \left| \frac{\sum_{k=1}^{\lfloor \widehat{c}_iM \rfloor}\xi_{ki}}{\widehat{c}_iM}-\mathbb{E}\xi_{11} \right|\leq\frac{\delta}{2} \right)\geq\frac{3}{4}.
    \end{equation*}
    Observe that
    \begin{equation*}
        \begin{split}
           & \left\{ \sup_{M\geq M_0}\sum_{i=1}^{r}\left|  \frac{\sum_{k=1}^{\lfloor c_iM \rfloor}\xi_{ki}}{c_iM}
        -\frac{\sum_{k=1}^{\lfloor \widehat{c}_iM \rfloor}\xi_{ki}}{\widehat{c}_iM}  \right|\leq\delta \right\} \\
        &\supset\left\{ \sup_{M\geq M_0}\sum_{i=1}^{r} \left| \frac{\sum_{k=1}^{\lfloor c_iM \rfloor}\xi_{ki}}{c_iM}-\mathbb{E}\xi_{11} \right|\leq\frac{\delta}{2} \right\} \cap
         \left\{ \sup_{M\geq M_0}\sum_{i=1}^{r} \left| \frac{\sum_{k=1}^{\lfloor \widehat{c}_iM \rfloor}\xi_{ki}}{\widehat{c}_iM}-\mathbb{E}\xi_{11} \right|\leq\frac{\delta}{2} \right\}.
        \end{split}
    \end{equation*}
    Then we finally obtain that
    \begin{equation*}
        \mathbb{P}\left( \sup_{M\geq M_0}\sum_{i=1}^{r}\left|  \frac{\sum_{k=1}^{\lfloor c_iM \rfloor}\xi_{ki}}{c_iM}
        -\frac{\sum_{k=1}^{\lfloor \widehat{c}_iM \rfloor}\xi_{ki}}{\widehat{c}_iM}  \right|\leq\delta \right)\geq\frac{1}{2}.
    \end{equation*}
\end{proof}

\subsection{Calculation of mean Hausdorff dimension}
In this section, we prove \eqref{mhdim of sponge system} in Theorem \ref{mdim of sponge system}. More precisely, in subsections \ref{mhdim:upper}-\ref{mhdim:lower} we prove the upper and lower bounds of \eqref{mhdim of sponge system} respectively. The proof adopts the ideas in \cite[Section 5.5]{Tsu22} and \cite[Section 4]{KP96}.

Fix $N\in\N$. Recall that $Z_N$ is inductively defined in Lemma \ref{Formula for the weighted topological entropy}. Notice that
\begin{equation*}
    \begin{split}
        1&=\sum_{\mathbf{v}^{(1)}\in\pi_1(\Omega)\vert_N}\frac{Z_N(\mathbf{v}^{(1)})\cdot Z_N(\mathbf{v}^{(1)})^{a_{r-1}-1}}{Z_N}\\
        &=\sum_{\mathbf{v}^{(1)}\in\pi_1(\Omega)\vert_N}\sum_{\mathbf{v}^{(2)}\in(\tau_{r-1}\vert_N)^{-1}(\mathbf{v}^{(1)})}\frac{Z_N(\mathbf{v}^{(2)})\cdot Z_N(\mathbf{v}^{(2)})^{a_{r-2}-1} \cdot Z_N(\mathbf{v}^{(1)})^{a_{r-1}-1}}{Z_N}\\
        &=\cdots=\sum_{\mathbf{v}^{(1)}\in\pi_1(\Omega)\vert_N}\cdots\sum_{\mathbf{v}^{(r-1)}\in(\tau_{2}\vert_N)^{-1}(\mathbf{v}^{(r-2)})}\frac{Z_N(\mathbf{v}^{(r-1)})\cdot Z_N(\mathbf{v}^{(r-1)})^{a_1-1}\cdots Z_N(\mathbf{v}^{(1)})^{a_{r-1}-1}}{Z_N}\\
        &=\sum_{(\mathbf{u}^{(1)},\mathbf{u}^{(2)},\ldots,\mathbf{u}^{(r)})\in\Omega\vert_N}
        \frac{Z_N(\mathbf{u}^{(1)},\ldots,\mathbf{u}^{(r-1)})^{a_1-1}\cdot Z_N(\mathbf{u}^{(1)},\ldots,\mathbf{u}^{(r-2)})^{a_{2}-1} \cdots Z_N(\mathbf{u}^{(1)})^{a_{r-1}-1}}{Z_N}.
    \end{split}
\end{equation*}

For $(\mathbf{u}^{(1)},\mathbf{u}^{(2)},\ldots,\mathbf{u}^{(r)})\in\Omega\vert_N$, we set
\begin{equation*}
    f_N(\mathbf{u}^{(1)},\mathbf{u}^{(2)},\ldots,\mathbf{u}^{(r)})=\frac{1}{Z_N}Z_N(\mathbf{u}^{(1)},\ldots,\mathbf{u}^{(r-1)})^{a_1-1}\cdot Z_N(\mathbf{u}^{(1)},\ldots,\mathbf{u}^{(r-2)})^{a_{2}-1} \cdots Z_N(\mathbf{u}^{(1)})^{a_{r-1}-1}.
\end{equation*}
Then we have 
\begin{equation*}
    \sum_{(\mathbf{u}^{(1)},\mathbf{u}^{(2)},\ldots,\mathbf{u}^{(r)})\in\Omega\vert_N} f_N(\mathbf{u}^{(1)},\mathbf{u}^{(2)},\ldots,\mathbf{u}^{(r)})=1
\end{equation*}
and therefore $f_N(\mathbf{u}^{(1)},\mathbf{u}^{(2)},\ldots,\mathbf{u}^{(r)})$ is indeed a probability measure on $\Omega\vert_N$. Let $\mu_N=(f_N)^{\otimes\N}$ be the countable product of the measure $f_N$. Then $\mu_N$ is a probability measure defined on $(\Omega\vert_N)^{\N}$.

Fix $M\in\N$. Let $(\mathbf{x}_1,\ldots,\mathbf{x}_r)=((\mathbf{x}_{k1})_{k\in\N},\ldots,(\mathbf{x}_{kr})_{k\in\N})\in (\Omega\vert_N)^{\N}$ with $\mathbf{x}_{kl}\in\mathcal{I}_l^N$ and $(\mathbf{x}_{k1},\ldots,\mathbf{x}_{kr})\in\Omega\vert_N$ for each $k\in\N$.
We set
\begin{equation*}
    \begin{split}
        P_{N,M}(\mathbf{x}_1,\ldots,\mathbf{x}_r)&=\left\{(\mathbf{y}_1,\ldots,\mathbf{y}_r)=((\mathbf{y}_{k1})_{k\in\N},\ldots,(\mathbf{y}_{kr})_{k\in\N})\in (\Omega\vert_N)^{\N}:  \right.\\
      &\left.\mathbf{y}_{ki}=\mathbf{x}_{ki} \text{  for all } 1\leq k\leq L_i(M) \text{ and } 1\leq i \leq r\right\}.
    \end{split}
\end{equation*}
Then the set $Q_{N,M}(\mathbf{x}_1,\ldots,\mathbf{x}_r)$ defined by \eqref{approximate cube} is the image of $P_{N,M}(\mathbf{x}_1,\ldots,\mathbf{x}_r)$ under the map
\begin{equation*}
    \begin{split}
        (\Omega\vert_N)^{\N}\to X_{\Omega}\vert_N,  (\mathbf{x}_1,\ldots,\mathbf{x}_r)\mapsto \left(\sum\limits_{k=1}^{\infty} \frac{\mathbf{x}_{k1}}{m_1^k},\ldots,\sum\limits_{k=1}^{\infty}\frac{\mathbf{x}_{kr}}{m_r^k}\right).
    \end{split}
\end{equation*}
Moreover,
\begin{equation*}
        \begin{split}
           (\Omega\vert_N)^{\N}=&\bigcup\left\{ P_{N,M}(\mathbf{x}_1,\ldots,\mathbf{x}_r): (\mathbf{x}_{k1},\ldots,\mathbf{x}_{kr})\in\Omega\vert_N  \text{ for } 1\leq k\leq L_r(M)\right.\\
           &\left. (\mathbf{x}_{k1},\ldots,\mathbf{x}_{ki})\in \pi_i(\Omega)\vert_N \text{ for each } L_{i+1}(M)+1\leq k\leq L_i(M) \text{ and } 1\leq i\leq r-1 \right\}.
        \end{split}
\end{equation*}
We now calculate the $\mu_N$-measure of $P_{N,M}(\mathbf{x}_1,\ldots,\mathbf{x}_r)$.
For $(\mathbf{u}^{(1)},\mathbf{u}^{(2)},\ldots,\mathbf{u}^{(r)})\in\Omega\vert_N$, set
\begin{equation*}
    f_N^{(1)}(\mathbf{u}^{(1)})=\frac{Z_N(\mathbf{u}^{(1)})^{a_{r-1}}}{Z_N},
\end{equation*}
\begin{equation*}
    f_N^{(i)}(\mathbf{u}^{(i)}\vert\mathbf{u}^{(1)},\ldots,\mathbf{u}^{(i-1)})=\frac{Z_N(\mathbf{u}^{(1)},\ldots\mathbf{u}^{(i)})^{a_{r-i}}}{Z_N(\mathbf{u}^{(1)},\ldots,\mathbf{u}^{(i-1)})}
\end{equation*}
for $i=2,\ldots,r-1$ and
\begin{equation*}
    f_N^{(r)}(\mathbf{u}^{(r)}\vert\mathbf{u}^{(1)},\ldots,\mathbf{u}^{(r-1)})=\frac{1}{Z_N(\mathbf{u}^{(1)},\ldots,\mathbf{u}^{(r-1)})}.
\end{equation*}
Then we may write
\begin{equation*}
    f_N(\mathbf{u}^{(1)},\mathbf{u}^{(2)},\ldots,\mathbf{u}^{(r)})=f_N^{(1)}(\mathbf{u}^{(1)})\cdot\prod_{i=2}^{r}f_N^{(i)}(\mathbf{u}^{(i)}\vert\mathbf{u}^{(1)},\ldots,\mathbf{u}^{(i-1)}).
\end{equation*}
Note that the value of $f_N^{(i)}(\mathbf{u}^{(i)}\vert\mathbf{u}^{(1)},\ldots,\mathbf{u}^{(i-1)})$ only depends on $\mathbf{u}^{(1)},\mathbf{u}^{(2)},\ldots,\mathbf{u}^{(i)}$. Indeed, the number $f_N^{(i)}(\mathbf{u}^{(i)}\vert\mathbf{u}^{(1)},\ldots,\mathbf{u}^{(i-1)})$ can be interpreted as the conditional probability that the $i$th digit of a randomly chosen member of $\Omega\vert_N$ equals $\mathbf{u}^{(i)}$, given that the first $i-1$ coordinates did. Then we have
\begin{equation*}
\begin{split}
    \mu_N(P_{N,M}(\mathbf{x}_1,\ldots,\mathbf{x}_r))&=
    \prod_{k=1}^{L_1(M)}f_N^{(1)}(\mathbf{x}_{k1})\cdot
    \prod_{i=2}^{r}\prod_{k=1}^{L_i(M)} f_N^{(i)}(\mathbf{x}_{ki}\vert\mathbf{x}_{k1},\ldots,\mathbf{x}_{k,i-1})\\
    &=Z_N^{-M}\cdot \prod_{i=1}^{r-1}\prod_{k=1}^{L_i(M)} Z_N(\mathbf{x}_{k1},\ldots,\mathbf{x}_{ki})^{a_{r-i}-1}\\
    &\cdot \prod_{i=1}^{r-1}\prod_{k=L_{i+1}(M)+1}^{L_i(M)} Z_N(\mathbf{x}_{k1},\ldots,\mathbf{x}_{ki}).
\end{split}
\end{equation*}
Taking a logarithm, we further have
\begin{equation*}
    \begin{split}
        &\log \mu_N(P_{N,M}(\mathbf{x}_1,\ldots,\mathbf{x}_r))
        =-M\log Z_N\\
        &+\sum_{i=1}^{r-1}\left( (a_{r-i}-1)\sum_{k=1}^{L_i(M)}\log Z_N(\mathbf{x}_{k1},\ldots,\mathbf{x}_{ki}) + \sum_{k=L_{i+1}(M)+1}^{L_i(M)} \log Z_N(\mathbf{x}_{k1},\ldots,\mathbf{x}_{ki}) \right)   \\
        &=-M\log Z_N +\sum_{i=1}^{r-1}\left(a_{r-i}\sum_{k=1}^{L_i(M)}\log Z_N(\mathbf{x}_{k1},\ldots,\mathbf{x}_{ki})
        -\sum_{k=1}^{L_{i+1}(M)} \log Z_N(\mathbf{x}_{k1},\ldots,\mathbf{x}_{ki})\right).
    \end{split}
\end{equation*}
Set 
\begin{equation}\label{ui}
    u_i(L):=\sum_{k=1}^{L}\frac{\log Z_N(\mathbf{x}_{k1},\ldots,\mathbf{x}_{ki})}{N}
\end{equation}
for $i=1,\ldots,r-1$. Notice that this quantity depends not only on $L$ but also $N$ and $(\mathbf{x}_{k1},\ldots,\mathbf{x}_{ki})\in \pi_i(\Omega)\vert_N$, but we suppress the dependence on $N$ and $(\mathbf{x}_{k1},\ldots,\mathbf{x}_{ki})$ for simplicity. Then
\begin{equation}\label{(5.8)}
    \frac{1}{NM}\log \mu_N(P_{N,M}(\mathbf{x}_1,\ldots,\mathbf{x}_r))
        =-\frac{\log Z_N}{N}+\frac{1}{M}\sum_{i=1}^{r-1}(a_{r-i}u_i(L_i(M))-u_{i}(L_{i+1}(M))).
\end{equation}

\begin{lem}\label{Lemma5.11}
    For each $\delta>0$ and $K\in\N$, there exists $L=L(\delta,K)\geq K$ such that for any $N\in\N$ and any $(\mathbf{x}_1,\ldots,\mathbf{x}_r)\in (\Omega\vert_N)^{\N}$ there exists $M\in[K,L]$ satisfying 
    \begin{equation*}
        \frac{1}{NM}\log \mu_N(P_{N,M}(\mathbf{x}_1,\ldots,\mathbf{x}_r))
        \geq-\frac{\log Z_N}{N}-\delta.
    \end{equation*}
\end{lem}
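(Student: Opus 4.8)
The plan is to rewrite the asserted inequality by means of the identity \eqref{(5.8)} and then invoke the combinatorial Lemma~\ref{KP96,Lemma4.1}; the only subtle point is that the integer $L$ has to be produced uniformly over all $N\in\N$ and all points $(\mathbf{x}_1,\dots,\mathbf{x}_r)\in(\Omega\vert_N)^{\N}$, and this uniformity I would extract by a compactness argument. The first step is to observe that the functions $u_i$ from \eqref{ui} always belong to one fixed compact space. From the recursive definition of $Z_N(\cdot)$ in Lemma~\ref{Formula for the weighted topological entropy}, using that $a_i\ge0$ together with the identity $a_{r-j-1}=\log m_{j+1}/\log m_{j+2}$, a downward induction on $j$ gives $1\le Z_N(\mathbf{v}^{(j)})\le m_{j+1}^{(r-j)N}$ for every $N\in\N$, every $1\le j\le r-1$ and every $\mathbf{v}^{(j)}\in\pi_j(\Omega)\vert_N$ (the fibre of $\tau_{r-j}\vert_N$ over $\mathbf{v}^{(j)}$ varies only in the $(j{+}1)$st coordinate, hence has at most $m_{j+1}^N$ elements). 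Therefore, with $C_0:=(r-1)\log m_r$, each increment $u_i(L+1)-u_i(L)=\tfrac1N\log Z_N(\mathbf{x}_{L+1,1},\dots,\mathbf{x}_{L+1,i})$ lies in $[0,C_0]$ while $u_i(0)=0$, so that $(u_1,\dots,u_{r-1})$ always belongs to $\mathcal{U}^{r-1}$, where $\mathcal{U}$ denotes the compact metrizable space of all functions $u\colon\Z_{\ge 0}\to\R$ with $u(0)=0$ and $0\le u(n+1)-u(n)\le C_0$ for every $n$.

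Next I would cast the sum appearing in \eqref{(5.8)} in the form required by Lemma~\ref{KP96,Lemma4.1}. Recalling that $a_{r-i}=\log m_i/\log m_{i+1}$ and $L_i(M)=\lfloor M\log m_1/\log m_i\rfloor$, set $c_i:=a_{r-i}$ and $\widetilde L_i:=\log m_i/\log m_1$; then $L_i(M)=\lfloor M/\widetilde L_i\rfloor$ and $L_{i+1}(M)=\lfloor c_iM/\widetilde L_i\rfloor$, and for $M\in\N$ the function
\begin{equation*}
    G_M(u_1,\dots,u_{r-1})=\frac{1}{M}\sum_{i=1}^{r-1}\bigl(a_{r-i}\,u_i(L_i(M))-u_i(L_{i+1}(M))\bigr)=\frac{1}{M}\sum_{i=1}^{r-1}\Bigl(c_i\,u_i(\lfloor M/\widetilde L_i\rfloor)-u_i(\lfloor c_iM/\widetilde L_i\rfloor)\Bigr)
\end{equation*}
is continuous on $\mathcal{U}^{r-1}$, since it reads off only finitely many coordinates of each $u_i$. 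By \eqref{(5.8)} the lemma will follow once one knows the following uniform claim: for every $\delta>0$ and $K\in\N$ there is an integer $L\ge K$ such that for each $u\in\mathcal{U}^{r-1}$ there exists an integer $M\in[K,L]$ with $G_M(u)\ge-\delta$. On the other hand, Lemma~\ref{KP96,Lemma4.1} applied to an individual $u\in\mathcal{U}^{r-1}$ (whose coordinate functions have increments bounded by $C_0$) says exactly that $\limsup_{M\to\infty}G_M(u)\ge0$.

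The remaining point — and the only genuine obstacle — is to upgrade this pointwise $\limsup\ge0$ to the uniform claim, which I would do by contradiction and compactness. If the uniform claim failed for some $\delta>0$ and $K\in\N$, then for every integer $L\ge K$ one could pick $u^{(L)}\in\mathcal{U}^{r-1}$ with $G_M(u^{(L)})<-\delta$ for all integers $M\in[K,L]$; extracting a convergent subsequence $u^{(L_j)}\to u^{\ast}$ in the compact space $\mathcal{U}^{r-1}$ with $L_j\to\infty$, and using the continuity of each $G_M$, we would obtain $G_M(u^{\ast})\le-\delta$ for every integer $M\ge K$, hence $\limsup_{M\to\infty}G_M(u^{\ast})\le-\delta<0$, contradicting Lemma~\ref{KP96,Lemma4.1}. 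Finally, for any given $N$ and $(\mathbf{x}_1,\dots,\mathbf{x}_r)\in(\Omega\vert_N)^{\N}$ the corresponding functions $u_i$ of \eqref{ui} lie in $\mathcal{U}^{r-1}$, so the uniform claim yields some $M\in[K,L]$ with $G_M(u_1,\dots,u_{r-1})\ge-\delta$; in view of \eqref{(5.8)} this is precisely the inequality $\tfrac{1}{NM}\log\mu_N(P_{N,M}(\mathbf{x}_1,\dots,\mathbf{x}_r))\ge-\tfrac{\log Z_N}{N}-\delta$ claimed in the lemma. The heart of the matter is thus the uniformity in $N$ and in the chosen point: Lemma~\ref{KP96,Lemma4.1} concerns a single fixed sequence of functions, and what makes the uniform version go through is that — thanks to the $N$-independent increment bound $C_0$ — all the relevant data lives in the single compact space $\mathcal{U}^{r-1}$, on which compactness turns the pointwise bound into a uniform one.
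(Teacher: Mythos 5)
Your argument is correct, and its skeleton --- rewrite the claimed inequality via \eqref{(5.8)} and feed the functions $u_i$ of \eqref{ui} into Lemma \ref{KP96,Lemma4.1} --- is exactly the paper's. The difference lies in how the uniformity of $L$ over $N$ and over the point $(\mathbf{x}_1,\dots,\mathbf{x}_r)$ is secured. The paper's two-line proof simply asserts that Lemma \ref{KP96,Lemma4.1} produces such an $L$; strictly as stated, that lemma only gives $\limsup_{t\to\infty}(\cdots)\ge 0$ for one fixed family $u_i$, and what is really being invoked is the uniform version in which $L$ depends only on $\delta$, $K$ and the bound on the increments (this is how the lemma appears in \cite{Tsu22} and in Kenyon--Peres's original proof). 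You correctly identify this as the only substantive point and derive the uniform statement from the pointwise one by a compactness argument on the space of functions with increments in $[0,C_0]$; that is a clean and complete way to close the gap, at the price of making $L$ nonconstructive. Two minor remarks. First, your increment bound $u_i(L+1)-u_i(L)\le (r-i)\log m_{i+1}\le (r-1)\log m_r$ is the correct one; the paper's claimed bound $\log m_r$ fails for $i<r-1$, though this is immaterial since any finite $N$-independent bound suffices. Second, in passing from $\limsup_{t\to\infty}$ over real $t$ in Lemma \ref{KP96,Lemma4.1} to $\limsup_{M\to\infty}$ over integers, one should observe that the expression varies by $O(1/M)$ on each interval $[M,M+1)$ because the $u_i$ have bounded increments and the $\widetilde L_i$ are at least $1$; this routine discretization step is elided in the same way by the paper (which restricts to $t=M\log m_1$), so it is not a defect of your argument specifically, but it is worth a sentence.
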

\begin{proof}
Recall that $a_i=\log m_{r-i}/\log m_{r-i+1}$ and $L_i(M)=\lfloor M\log m_1/\log m_i \rfloor$. Taking $t=\log m_1^M$ and $L_i=\log m_i$ gives $\lfloor t/L_i \rfloor=L_i(M)$ and $\lfloor a_{r-i}t/L_i \rfloor=L_{i+1}(M)$. Fix $\delta>0$ and $K\in\N$. For any $N\in\N$ and $(\mathbf{x}_1,\ldots,\mathbf{x}_r)\in (\Omega\vert_N)^{\N}$, we may apply Lemma \ref{KP96,Lemma4.1} to the functions $u_i:\N\to\R, i=1,\ldots,r-1$ defined by \eqref{ui} since $\sup_{L\in\N}|u_i(L+1)-u_i(L)|=\sup_{L\in\N}\log Z_N(\mathbf{x}_{L+1,1},\ldots,\mathbf{x}_{L+1,i})/N\leq \log m_r$ for each $i$. Then there exists $L=L(\delta,K)\geq K$ such that there exists $M\in[K,L]$ satisfying
\begin{equation*}
    \frac{1}{M}\sum_{i=1}^{r-1}(a_{r-i}u_i(L_i(M))-u_{i}(L_{i+1}(M)))\geq-\delta.
\end{equation*} 
This completes the proof.
\end{proof}

\subsubsection{Proof of \eqref{mhdim of sponge system} in Theorem \ref{mdim of sponge system}: upper bound}\label{mhdim:upper}
Since $\log Z_N$ is sub-additive in $N$, there exists $s>0$ satisfying $\log_{m_1}Z_N\leq sN$ for all $N\in\N$.

Fix $N\in\N$. We define a probability measure $\nu_N$ on $X_{\Omega}\vert_N$ as the push-forward of $\mu_N$ under the map
\begin{equation*}
    \begin{split}
        (\Omega\vert_N)^{\N}\to X_{\Omega}\vert_N,  (\mathbf{x}_1,\ldots,\mathbf{x}_r)\mapsto \left(\sum\limits_{k=1}^{\infty} \frac{\mathbf{x}_{k1}}{m_1^k},\ldots,\sum\limits_{k=1}^{\infty}\frac{\mathbf{x}_{kr}}{m_r^k}\right).
    \end{split}
\end{equation*}
Recall that $X_{\Omega}\vert_N\subset[0,1]^{rN}$. Set 
\begin{equation*}
    X(N)=X_{\Omega}\vert_N\times[0,1]^r\times[0,1]^r\times\cdots\subset([0,1]^r)^{\N}.
\end{equation*}
Then $X_{\Omega}\subset X(N)$. Recall also that the metric $d$ on $([0,1]^r)^{\N}$ is defined by
\begin{equation*}
    d((\mathbf{x}_1,\ldots,\mathbf{x}_r),(\mathbf{y}_1,\ldots,\mathbf{y}_r))=\sum\limits_{k=1}^{\infty}2^{-k}\max\{|\mathbf{x}_{kl}-\mathbf{y}_{kl}|:1\leq l\leq r\}
\end{equation*}
Denote by $\Leb$ the Lebesgue measure on $[0,1]^r$. We define
\begin{equation*}
    \nu'_N:=\nu_N\otimes\Leb\otimes\Leb\otimes\cdots,
\end{equation*}
which is a probability measure on $X(N)$. For $N,M\in\N$ and $(\mathbf{x}_1,\ldots,\mathbf{x}_r)\in (\Omega\vert_N)^{\N}$,
set
\begin{equation*}
    Q'_{N,M}(\mathbf{x}_1,\ldots,\mathbf{x}_r):=Q_{N,M}(\mathbf{x}_1,\ldots,\mathbf{x}_r)\times[0,1]^r\times[0,1]^r\times\cdots\subset X(N).
\end{equation*}
Then we have
\begin{equation*}
    \nu'_N(Q'_{N,M}(\mathbf{x}_1,\ldots,\mathbf{x}_r))=\nu_N(Q_{N,M}(\mathbf{x}_1,\ldots,\mathbf{x}_r))\geq\mu_N(P_{N,M}(\mathbf{x}_1,\ldots,\mathbf{x}_r)).
\end{equation*}
Fix $\varepsilon>0$ and $0<\delta<\varepsilon$ with $6\delta^{\varepsilon}<1$. Fix $K\in\N$ satisfying 
\begin{equation}\label{choice of K}
    m_rm_1^{-K}<\delta/12 \text{ and } (2m_r)^{s+\varepsilon}\leq m_1^{\varepsilon K/2}.
\end{equation}
Let $L=L(\frac{\varepsilon\log m_1}{2},K)\in\N$ be the number in Lemma \ref{Lemma5.11}, then for any $N\in\N$ and any $(\mathbf{x}_1,\ldots,\mathbf{x}_r)\in (\Omega\vert_N)^{\N}$ there exists $M\in[K,L]$ satisfying 
    \begin{equation*}
        \frac{1}{NM}\log \mu_N(P_{N,M}(\mathbf{x}_1,\ldots,\mathbf{x}_r))
        \geq-\frac{\log Z_N}{N}-\frac{\varepsilon\log m_1}{2}.
    \end{equation*}
Then
\begin{equation*}
    \begin{split}
        \nu'_N(Q'_{N,M}(\mathbf{x}_1,\ldots,\mathbf{x}_r))
        &\geq\mu_N(P_{N,M}(\mathbf{x}_1,\ldots,\mathbf{x}_r))\\
        &\geq\exp\left\{ NM\left(-\frac{\log Z_N}{N}-\frac{\varepsilon\log m_1}{2}\right) \right\}\\
        &=\exp\left\{ -M\log m_1\left(\log_{m_1}Z_N+\frac{\varepsilon N}{2}\right) \right\}\\
        &=(m_1^{-M})^{\log_{m_1}Z_N+\frac{\varepsilon N}{2}}.
    \end{split}
\end{equation*}
Notice that $(m_1^{-M})^{\log_{m_1}Z_N+\frac{\varepsilon N}{2}}\geq (2m_rm_1^{-M})^{\log_{m_1}Z_N+\varepsilon N}$. Indeed, it is equivalent to $m_1^{\frac{\varepsilon M}{2}}\geq (2m_r)^{\frac{\log_{m_1}Z_N}{N}+\varepsilon}$, which is obvious due to \eqref{choice of K} and the fact that $\frac{\log_{m_1}Z_N}{N}\leq s$.

Choose $N_0\in\N$ with $\sum_{n>N_0}2^{-N}<m_rm_1^{-L}$.
For $\mathbf{u}=(\mathbf{u}_k)_{k\in\N}$ with $\mathbf{u}_k\in[0,1]^r$, denote $\mathbf{u}\vert_N=(\mathbf{u}_1,\ldots,\mathbf{u}_N)\in([0,1]^r)^N$. Then for any $\mathbf{u},\mathbf{v}\in([0,1]^r)^{\N}$ and $N\geq N_0$, we have
\begin{equation}\label{distance}
    d_{N-N_0}(\mathbf{u},\mathbf{v})<\|\mathbf{u}\vert_N-\mathbf{v}\vert_N\|_{\infty}+m_rm_1^{-L}.
\end{equation}
\begin{claim}
    For any $(\mathbf{x}_1,\ldots,\mathbf{x}_r)\in (\Omega\vert_N)^{\N}$ and $N,M\in\N$ with $N>N_0$ and $M\leq L$, we have
    \begin{equation*}
        0<\diam(Q'_{N,M}(\mathbf{x}_1,\ldots,\mathbf{x}_r),d_{N-N_0})<2m_rm_1^{-M}.
    \end{equation*}
\end{claim}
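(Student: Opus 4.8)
The plan is to establish the two inequalities separately. The upper bound will come from the estimate \eqref{distance} together with the $\ell^{\infty}$-diameter bound $\diam(Q_{N,M}(\mathbf{x}_1,\ldots,\mathbf{x}_r),\|\cdot\|_{\infty})\leq m_rm_1^{-M}$ recorded above, while the lower bound is immediate.

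For the lower bound, note that taking $\mathbf{y}_{ki}=\mathbf{x}_{ki}$ for all $k$ and $i$ in the definition \eqref{approximate cube} shows $Q_{N,M}(\mathbf{x}_1,\ldots,\mathbf{x}_r)\neq\emptyset$, so $Q'_{N,M}(\mathbf{x}_1,\ldots,\mathbf{x}_r)=Q_{N,M}(\mathbf{x}_1,\ldots,\mathbf{x}_r)\times[0,1]^r\times[0,1]^r\times\cdots$ is nonempty; since the appended factors $[0,1]^r$ are nondegenerate, $Q'_{N,M}$ contains two distinct points $\mathbf{u}\neq\mathbf{v}$. As $N>N_0$, the Bowen metric $d_{N-N_0}$ involves at least one iterate, so $d_{N-N_0}(\mathbf{u},\mathbf{v})\geq d(\mathbf{u},\mathbf{v})>0$ because $d$ is a metric; hence $\diam(Q'_{N,M},d_{N-N_0})>0$.

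For the upper bound, fix $\mathbf{u},\mathbf{v}\in Q'_{N,M}(\mathbf{x}_1,\ldots,\mathbf{x}_r)$ and write $\mathbf{u}=(\mathbf{u}_k)_{k\in\N}$, $\mathbf{v}=(\mathbf{v}_k)_{k\in\N}$ with $\mathbf{u}_k,\mathbf{v}_k\in[0,1]^r$. By the definition of $Q'_{N,M}$ the restrictions $\mathbf{u}\vert_N,\mathbf{v}\vert_N$ lie in $Q_{N,M}(\mathbf{x}_1,\ldots,\mathbf{x}_r)$, so $\|\mathbf{u}\vert_N-\mathbf{v}\vert_N\|_{\infty}\leq\diam(Q_{N,M},\|\cdot\|_{\infty})\leq m_rm_1^{-M}$. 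For $0\leq n<N-N_0$ I would expand and split the sum at the $N$-th factor,
\begin{equation*}
d(\sigma^n\mathbf{u},\sigma^n\mathbf{v})=\sum_{k\geq1}2^{-k}\|\mathbf{u}_{k+n}-\mathbf{v}_{k+n}\|_{\infty}=\sum_{k=1}^{N-n}2^{-k}\|\mathbf{u}_{k+n}-\mathbf{v}_{k+n}\|_{\infty}+\sum_{k>N-n}2^{-k}\|\mathbf{u}_{k+n}-\mathbf{v}_{k+n}\|_{\infty},
\end{equation*}
bound the first sum by $\|\mathbf{u}\vert_N-\mathbf{v}\vert_N\|_{\infty}\sum_{k\geq1}2^{-k}\leq m_rm_1^{-M}$ (since for $k\leq N-n$ the index $k+n$ is $\leq N$), and bound the second by $\sum_{k>N-n}2^{-k}=2^{-(N-n)}\leq 2^{-(N_0+1)}$, using $n\leq N-N_0-1$ and $\|\mathbf{u}_{k+n}-\mathbf{v}_{k+n}\|_{\infty}\leq1$. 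This is essentially the computation behind \eqref{distance}, except that I keep the sharper tail constant $2^{-(N_0+1)}$ in place of $m_rm_1^{-L}$. By the choice of $N_0$ one has $2^{-N_0}=\sum_{n>N_0}2^{-n}<m_rm_1^{-L}\leq m_rm_1^{-M}$ (using $M\leq L$), so the tail term is at most $\tfrac{1}{2}m_rm_1^{-M}$; hence $d(\sigma^n\mathbf{u},\sigma^n\mathbf{v})\leq\tfrac{3}{2}m_rm_1^{-M}$ for every such $n$, and therefore $d_{N-N_0}(\mathbf{u},\mathbf{v})\leq\tfrac{3}{2}m_rm_1^{-M}$. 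Taking the supremum over $\mathbf{u},\mathbf{v}\in Q'_{N,M}$ gives $\diam(Q'_{N,M},d_{N-N_0})\leq\tfrac{3}{2}m_rm_1^{-M}<2m_rm_1^{-M}$.

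There is no serious obstacle; the two points that need a little attention are keeping the first $N$ coordinates, which are constrained by $Q_{N,M}$, separate from the free tail $[0,1]^r$-factors appended in $Q'_{N,M}$, and extracting the strict inequality, which is why I would track the tail contribution as $2^{-(N_0+1)}$ and feed in $M\leq L$ directly rather than quoting \eqref{distance} verbatim.
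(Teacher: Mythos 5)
Your proof is correct and follows essentially the same route as the paper: the paper simply invokes the already-established estimate \eqref{distance} together with $\diam(Q_{N,M},\|\cdot\|_{\infty})\leq m_rm_1^{-M}$ and $M\leq L$, whereas you re-derive \eqref{distance} inline with the sharper tail constant $2^{-(N_0+1)}$, which makes the strictness of the final inequality (and its survival under the supremum over pairs) completely explicit. Your treatment of the lower bound, via the nondegenerate appended factors $[0,1]^r$ and the fact that $d_{N-N_0}\geq d$ when $N>N_0$, is also just a spelled-out version of the paper's one-line observation that $Q'_{N,M}$ is not a single point.
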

\begin{proof}
    Observe that $Q'_{N,M}(\mathbf{x}_1,\ldots,\mathbf{x}_r)$ is not a single point and thus its diameter is positive. By \eqref{distance}, we have
    \begin{equation*}
        \diam(Q'_{N,M}(\mathbf{x}_1,\ldots,\mathbf{x}_r),d_{N-N_0})<\diam(Q_{N,M}(\mathbf{x}_1,\ldots,\mathbf{x}_r),\|\cdot\|_{\infty})+m_rm_1^{-L}.
    \end{equation*}
    Note that $\diam(Q_{N,M}(\mathbf{x}_1,\ldots,\mathbf{x}_r),\|\cdot\|_{\infty})\leq m_rm_1^{-M}$, as shown in Section \ref{Calculation of metric mean dimension}. Then
    \begin{equation*}
        \diam(Q'_{N,M}(\mathbf{x}_1,\ldots,\mathbf{x}_r),d_{N-N_0})<m_rm_1^{-M}+m_rm_1^{-L}\leq 2m_rm_1^{-M}
    \end{equation*}
    since $M\leq L$.
\end{proof}
Notice that $2m_rm_1^{-M}<\delta/6$ since we have assumed that $m_rm_1^{-K}<\delta/12$ in \eqref{choice of K}.
Up to now, we have proven that for any $N>N_0$ and any $(\mathbf{x}_1,\ldots,\mathbf{x}_r)\in (\Omega\vert_N)^{\N}$, there exists $M\in[K,L]$ such that
\begin{equation*}
    \begin{split}
        &0<\diam(Q'_{N,M}(\mathbf{x}_1,\ldots,\mathbf{x}_r),d_{N-N_0})<\frac{\delta}{6},\\
        &\nu'_N(Q'_{N,M}(\mathbf{x}_1,\ldots,\mathbf{x}_r))\geq (\diam(Q'_{N,M}(\mathbf{x}_1,\ldots,\mathbf{x}_r),d_{N-N_0}))^{\log_{m_1}Z_N+\varepsilon N}.
    \end{split}
\end{equation*}
Since $6\delta^{\varepsilon}<1$, by applying Lemma \ref{Hausdorff dim estimate} to the space $(X(N),d_{N-N_0})$, we obtain that
\begin{equation*}
    \hdim(X(N),d_{N-N_0},\delta)\leq (1+\varepsilon)(\log_{m_1}Z_N+\varepsilon N)
\end{equation*}
for all $N>N_0$. Since $\delta<\varepsilon$ and $X_{\Omega}\subset X(N)$, we further have
\begin{equation*}
    \frac{\hdim(X_{\Omega},d_{N-N_0},\varepsilon)}{N}\leq (1+\varepsilon)\left(\frac{\log_{m_1}Z_N}{N}+\varepsilon \right)
\end{equation*}
for all $N>N_0$. Letting $N\to\infty$, by Lemma \ref{Formula for the weighted topological entropy} we have
\begin{equation*}
\begin{split}
    \limsup_{N\to\infty}\frac{\hdim(X_{\Omega},d_{N},\varepsilon)}{N}
    &\leq (1+\varepsilon)\left(\frac{1}{\log m_1}\lim_{N\to\infty}\frac{\log Z_N}{N}+\varepsilon \right)\\
    &=(1+\varepsilon)\left(\frac{\htop^{\mathbf{a}}(\{(\pi_i(\Omega),\sigma)\}_{i=1}^r,\{\tau_i\}_{i=1}^{r-1})}{\log m_1}+\varepsilon \right).
\end{split}
\end{equation*}
Letting $\varepsilon\to0$, we finally obtain the desired upper bound, i.e.
\begin{equation}\label{upper bound}
    \umhdim(X_{\Omega},\sigma,d)\leq \frac{\htop^{\mathbf{a}}(\{(\pi_i(\Omega),\sigma)\}_{i=1}^r,\{\tau_i\}_{i=1}^{r-1})}{\log m_1}.
\end{equation}

\subsubsection{Proof of \eqref{mhdim of sponge system} in Theorem \ref{mdim of sponge system}: lower bound}\label{mhdim:lower}
\begin{lem}\label{Lemma5.12}
    For any $\delta>0$ there exists $M_1=M_1(\delta)\in\N$ such that for any $N\in\N$ there is a Borel subset $R(\delta,N)\subset(\Omega\vert_N)^{\N}$ satisfying the following two conditions.
    \begin{itemize}
        \item $\mu_N(R(\delta,N))\geq\frac{1}{2}$.
        \item For any $M\geq M_1$ and $(\mathbf{x}_1,\ldots,\mathbf{x}_r)\in R(\delta,N)$ we have
        \begin{equation*}
            \left|\frac{1}{NM}\log \mu_N(P_{N,M}(\mathbf{x}_1,\ldots,\mathbf{x}_r))+\frac{\log Z_N}{N}\right|\leq\delta.
        \end{equation*}
    \end{itemize}
\end{lem}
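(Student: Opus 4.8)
The plan is to extract the estimate directly from the exact identity \eqref{(5.8)}. Rearranging it, the inequality to be proved,
\[
\left|\frac{1}{NM}\log\mu_N(P_{N,M}(\mathbf x_1,\dots,\mathbf x_r))+\frac{\log Z_N}{N}\right|\leq\delta,
\]
is equivalent to
\[
\left|\frac1M\sum_{i=1}^{r-1}\bigl(a_{r-i}u_i(L_i(M))-u_i(L_{i+1}(M))\bigr)\right|\leq\delta .
\]
I would control the left-hand side by a law-of-large-numbers argument for the column sums $u_i(L)=\sum_{k=1}^{L}\xi_{ki}$, where $\xi_{ki}:=\log Z_N(\mathbf x_{k1},\dots,\mathbf x_{ki})/N$. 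Since $\mu_N=(f_N)^{\otimes\N}$, for each fixed $i$ the sequence $(\xi_{ki})_{k\in\N}$ is i.i.d.\ under $\mu_N$ (even though within a single row $k$ the variables $\xi_{k1},\dots,\xi_{k,r-1}$ are correlated).

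The first step is an algebraic one. Set $c_i:=\log m_1/\log m_i$ and $\widehat{c}_i:=\log m_1/\log m_{i+1}$, so $0<c_i,\widehat{c}_i\leq1$, $L_i(M)=\lfloor c_iM\rfloor$, $L_{i+1}(M)=\lfloor\widehat{c}_iM\rfloor$, and, since $a_{r-i}=\log m_i/\log m_{i+1}$, one has the key relation $a_{r-i}c_i=\widehat{c}_i$. A short computation then gives, for each $1\leq i\leq r-1$,
\[
\frac1M\bigl(a_{r-i}u_i(L_i(M))-u_i(L_{i+1}(M))\bigr)=\widehat{c}_i\left(\frac{\sum_{k=1}^{\lfloor c_iM\rfloor}\xi_{ki}}{c_iM}-\frac{\sum_{k=1}^{\lfloor\widehat{c}_iM\rfloor}\xi_{ki}}{\widehat{c}_iM}\right),
\]
whence, using $\widehat{c}_i\leq1$,
\[
\left|\frac1M\sum_{i=1}^{r-1}\bigl(a_{r-i}u_i(L_i(M))-u_i(L_{i+1}(M))\bigr)\right|\leq\sum_{i=1}^{r-1}\left|\frac{\sum_{k=1}^{\lfloor c_iM\rfloor}\xi_{ki}}{c_iM}-\frac{\sum_{k=1}^{\lfloor\widehat{c}_iM\rfloor}\xi_{ki}}{\widehat{c}_iM}\right|,
\]
which is precisely the quantity estimated in Lemma \ref{Lemma5.9}.

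The second step is to check the hypotheses of Lemma \ref{Lemma5.9} with a constant that does not depend on $N$ --- this is essential because $M_1$ must be independent of $N$. Unwinding the recursion defining $Z_N(\cdot)$ and using $0\leq a_j\leq1$, a crude induction on $r-i$ yields $1\leq Z_N(\mathbf v^{(i)})\leq(m_1\cdots m_r)^{rN}$ for every $\mathbf v^{(i)}\in\pi_i(\Omega)\vert_N$, hence $0\leq\xi_{ki}\leq C_0:=r\log(m_1\cdots m_r)$ uniformly in $N,k,i$; in particular $\xi_{ki}\in L^2$ and $\max\{|\mathbb{E}\xi_{ki}|,\mathbb{V}\xi_{ki}\}\leq C:=\max\{C_0,C_0^2\}$ uniformly in $N$. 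Applying Lemma \ref{Lemma5.9} to the $r-1$ pairs $(c_i,\widehat{c}_i)_{i=1}^{r-1}$ with this $C$ and the given $\delta$ produces $M_1=M_1(\delta)\in\N$ such that the Borel set
\[
R(\delta,N):=\left\{(\mathbf x_1,\dots,\mathbf x_r)\in(\Omega\vert_N)^{\N}:\ \sup_{M\geq M_1}\sum_{i=1}^{r-1}\left|\frac{\sum_{k=1}^{\lfloor c_iM\rfloor}\xi_{ki}}{c_iM}-\frac{\sum_{k=1}^{\lfloor\widehat{c}_iM\rfloor}\xi_{ki}}{\widehat{c}_iM}\right|\leq\delta\right\}
\]
satisfies $\mu_N(R(\delta,N))\geq\frac{1}{2}$; combining this with the two displays above and \eqref{(5.8)} shows that $R(\delta,N)$ has exactly the two properties asserted in the lemma.

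I do not anticipate a genuine obstacle: once \eqref{(5.8)} is available the statement is essentially a repackaging of Lemma \ref{Lemma5.9}. The two points demanding care are (i) keeping $M_1$ uniform in $N$, which is what forces the $N$-independent moment bound on $\xi_{ki}$ above; and (ii) invoking Lemma \ref{Lemma5.9} in the form that tolerates the within-row correlation of $\xi_{k1},\dots,\xi_{k,r-1}$ --- this is harmless, as the proof of that lemma uses only the i.i.d.\ property along each fixed index $i$ together with union bounds, never independence across $i$.
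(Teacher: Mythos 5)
Your proposal is correct and follows essentially the same route as the paper: the same rearrangement of \eqref{(5.8)} via the identity $a_{r-i}c_i=\widehat{c}_i$, the same random variables $\xi_{ki}$ with an $N$-uniform moment bound, and the same application of Lemma \ref{Lemma5.9} to define $R(\delta,N)$. Your explicit remark that Lemma \ref{Lemma5.9} only needs the i.i.d.\ property along each fixed $i$ (not joint independence across $i$, which fails here) is a point the paper glosses over, and you handle it correctly.
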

\begin{proof}
    Fix $1\leq i\leq r-1$. For $k\in\N$, we define $\xi_{ki}:(\Omega\vert_N)^{\N}\to\R$ by
    \begin{equation*}
        \xi_{ki}=\xi_{ki}(\mathbf{x}_1,\ldots,\mathbf{x}_r):=\frac{\log Z_N(\mathbf{x}_{k1},\ldots,\mathbf{x}_{ki})}{N}.
    \end{equation*}
    The random variables $\xi_{ki},k\in\N,1\leq i\leq r-1$ are independent and identically distributed with respect to the measure $\mu_N=(f_N)^{\otimes\N}$. Moreover, $0\leq \xi_{ki}\leq \log m_r$ and thus its mean and variance satisfy
    \begin{equation*}
        0\leq \mathbb{E}(\xi_{ki}) \leq \log m_r, \mathbb{V}(\xi_{ki}) \leq (\log m_r)^2.
    \end{equation*}
    Take $c_i=\log m_1/\log m_i$ and $\widehat{c}_i=\log m_1/\log m_{i+1}$ for each $1\leq i\leq r-1$. Then $\lfloor c_iM \rfloor=L_i(M)$ and $\lfloor \widehat{c}_iM \rfloor=L_{i+1}(M)$. We apply Lemma \ref{Lemma5.9} to $\{\xi_{ki}\}_{k\in\N,1\leq i\leq r-1}$ and thus there exists $M_1=M_1(\delta)\in\N$ such that the set
    \begin{equation*}
        R(\delta,N):=\left\{(\mathbf{x}_1,\ldots,\mathbf{x}_r)\in(\Omega\vert_N)^{\N} : \sup_{M\geq M_1}\sum_{i=1}^{r-1}\left|  \frac{\sum_{k=1}^{L_i(M)}\xi_{ki}}{M\frac{\log m_1}{\log m_i}}
        -\frac{\sum_{k=1}^{L_{i+1}(M)}\xi_{ki}}{M\frac{\log m_1}{\log m_{i+1}}}  \right|\leq\delta \right\}
    \end{equation*}
    satisfies $\mu_N(R(\delta,N))\geq\frac{1}{2}$. Recall that
    \begin{equation*}
        u_i(L)=\sum_{k=1}^{L}\frac{\log Z_N(\mathbf{x}_{k1},\ldots,\mathbf{x}_{ki})}{N}=\sum_{k=1}^{L}\xi_{ki}
    \end{equation*}
    and $a_i=\log m_{r-i}/\log m_{r-i+1}$. By \eqref{(5.8)}
    \begin{equation*}
    \begin{split}
        \frac{1}{NM}\log \mu_N(P_{N,M}(\mathbf{x}_1,\ldots,\mathbf{x}_r))+\frac{\log Z_N}{N}
        &=\sum_{i=1}^{r-1}\left(a_{r-i}\frac{u_i(L_i(M))}{M}-\frac{u_{i}(L_{i+1}(M))}{M}\right)\\
    &=\sum_{i=1}^{r-1}\frac{\log m_1}{\log m_{i+1}} \left( \frac{u_i(L_i(M))}{M\frac{\log m_1}{\log m_i}} -\frac{u_i(L_{i+1}(M))}{M\frac{\log m_1}{\log m_{i+1}}} \right)\\
    &=\sum_{i=1}^{r-1}\frac{\log m_1}{\log m_{i+1}} \left( \frac{\sum_{k=1}^{L_i(M)}\xi_{ki}}{M\frac{\log m_1}{\log m_i}} -\frac{\sum_{k=1}^{L_{i+1}(M)}\xi_{ki}}{M\frac{\log m_1}{\log m_{i+1}}} \right)\\
    \end{split}
    \end{equation*}
    and therefore
    \begin{equation*}
        \left|\frac{1}{NM}\log \mu_N(P_{N,M}(\mathbf{x}_1,\ldots,\mathbf{x}_r))+\frac{\log Z_N}{N}\right|
        \leq\sum_{i=1}^{r-1} \left|   \frac{\sum_{k=1}^{L_i(M)}\xi_{ki}}{M\frac{\log m_1}{\log m_i}}
        -\frac{\sum_{k=1}^{L_{i+1}(M)}\xi_{ki}}{M\frac{\log m_1}{\log m_{i+1}}}  \right|.
    \end{equation*}
    Furthermore, for any $M\geq M_1$ and $(\mathbf{x}_1,\ldots,\mathbf{x}_r)\in R(\delta,N)$
    \begin{equation*}
            \left|\frac{1}{NM}\log \mu_N(P_{N,M}(\mathbf{x}_1,\ldots,\mathbf{x}_r))+\frac{\log Z_N}{N}\right|\leq\delta.
    \end{equation*}
\end{proof}

Recall that we have defined the approximate cube of level $M\in\N$ centred at $(\mathbf{x}_1,\ldots,\mathbf{x}_r)\in (\Omega\vert_N)^{\N}$ by 
\begin{equation*}
\begin{split}
      Q_{N,M}(\mathbf{x}_1,\ldots,\mathbf{x}_r)  =
   \left\{\left(\sum\limits_{k=1}^{\infty} \frac{\mathbf{y}_{k1}}{m_1^k},\ldots,\sum\limits_{k=1}^{\infty}\frac{\mathbf{y}_{kr}}{m_r^k}\right)  \middle|\, 
    \parbox{2.5in}{\centering $(\mathbf{y}_{k1},\ldots,\mathbf{y}_{kr})\in\Omega\vert_N$, $\forall k\in\N$ with \\
    $\mathbf{y}_{ki}=\mathbf{x}_{ki}$ for all $1\leq k\leq L_i(M)$ \\ and $1\leq i \leq r$} \right\},
\end{split}
\end{equation*}
which is the image of the set $P_{N,M}(\mathbf{x}_1,\ldots,\mathbf{x}_r)$ under the map
\begin{equation*}
    \begin{split}
        (\Omega\vert_N)^{\N}\to X_{\Omega}\vert_N,  (\mathbf{x}_1,\ldots,\mathbf{x}_r)\mapsto \left(\sum\limits_{k=1}^{\infty} \frac{\mathbf{x}_{k1}}{m_1^k},\ldots,\sum\limits_{k=1}^{\infty}\frac{\mathbf{x}_{kr}}{m_r^k}\right).
    \end{split}
\end{equation*}
It is direct to observe that for $(\mathbf{x}_1,\ldots,\mathbf{x}_r)=((\mathbf{x}_{k1})_{k\in\N},\ldots,(\mathbf{x}_{kr})_{k\in\N})$ and $(\mathbf{y}_1,\ldots,\mathbf{y}_r)=((\mathbf{y}_{k1})_{k\in\N},\ldots,(\mathbf{y}_{kr})_{k\in\N})$ in $(\Omega\vert_N)^{\N}$, the following three conditions are equivalent to each other:
\begin{itemize}
    \item $P_{N,M}(\mathbf{x}_1,\ldots,\mathbf{x}_r)=P_{N,M}(\mathbf{y}_1,\ldots,\mathbf{y}_r)$.
    \item $Q_{N,M}(\mathbf{x}_1,\ldots,\mathbf{x}_r)=Q_{N,M}(\mathbf{y}_1,\ldots,\mathbf{y}_r)$.
    \item $(\mathbf{x}_{11},\ldots,\mathbf{x}_{L_1(M),1},\ldots,\mathbf{x}_{1r},\ldots,\mathbf{x}_{L_r(M),r})=(\mathbf{y}_{11},\ldots,\mathbf{y}_{L_1(M),1},\ldots,\mathbf{y}_{1r},\ldots,\mathbf{y}_{L_r(M),r})$.
\end{itemize}
Indeed, both the sets $P_{N,M}(\mathbf{x}_1,\ldots,\mathbf{x}_r)$ and $Q_{N,M}(\mathbf{x}_1,\ldots,\mathbf{x}_r)$ depend only on the coordinates $\mathbf{x}_{11},\ldots,\mathbf{x}_{L_1(M),1},\ldots,\mathbf{x}_{1r},\ldots,\mathbf{x}_{L_r(M),r}$.

For $E,F\subset X_{\Omega}\vert_N$, denote
\begin{equation*}
    \dist_{\infty}(E,F)=\inf_{\substack{\mathbf{x}\in E \\ \mathbf{y}\in F}}\|\mathbf{x}-\mathbf{y}\|_{\infty},
\end{equation*}
where $\|\cdot\|_{\infty}$ is the $\ell^{\infty}$-distance in $\R^{rN}$.
\begin{lem}\label{Lemma5.13}
    Fix $N,M\in\N$. Let $(\mathbf{x}_1^{(1)},\ldots,\mathbf{x}_r^{(1)}),\ldots,(\mathbf{x}_1^{(l)},\ldots,\mathbf{x}_r^{(l)})\in(\Omega\vert_N)^{\N}$ with $l\geq 2^{rN}+1$. Suppose that
    \begin{equation*}
        P_{N,M}(\mathbf{x}_1^{(i)},\ldots,\mathbf{x}_r^{(i)}) \neq P_{N,M}(\mathbf{x}_1^{(j)},\ldots,\mathbf{x}_r^{(j)}) \text{ for all } i\neq j.
    \end{equation*}
    Then there exist $i$ and $j$ for which
    \begin{equation*}
        \dist_{\infty}(Q_{N,M}(\mathbf{x}_1^{(i)},\ldots,\mathbf{x}_r^{(i)}),Q_{N,M}(\mathbf{x}_1^{(j)},\ldots,\mathbf{x}_r^{(j)})) \geq m_1^{-M}.
    \end{equation*}
\end{lem}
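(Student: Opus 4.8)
The plan is to label each approximate cube by a point of $\{0,1\}^{rN}$ recording, in each of the $rN$ coordinates of $\R^{rN}$, the parity of the grid cell occupied by that cube, and then to apply the pigeonhole principle: since there are $l\ge 2^{rN}+1$ distinct cubes but only $2^{rN}$ possible labels, two of the cubes must receive the same label, and I will show that \emph{distinct} cubes with \emph{equal} labels are automatically at $\ell^\infty$-distance at least $m_1^{-M}$.

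First I would unwind the grid structure of a single cube. Fix $Q=Q_{N,M}(\mathbf{x}_1,\dots,\mathbf{x}_r)$, fix $1\le i\le r$ and $1\le p\le N$, and write $m=m_i^{L_i(M)}$ and $\mathbf{x}_{kip}\in\{0,\dots,m_i-1\}$ for the $p$-th entry of $\mathbf{x}_{ki}\in\mathcal{I}_i^N$. Every point of $Q$ has $(i,p)$-th coordinate equal to $\sum_{k=1}^{L_i(M)}\mathbf{x}_{kip}m_i^{-k}+\sum_{k>L_i(M)}\mathbf{y}_{kip}m_i^{-k}$ for some digits $\mathbf{y}_{kip}\in\{0,\dots,m_i-1\}$; the first sum equals $A_{i,p}(Q)/m$ with $A_{i,p}(Q):=\sum_{k=1}^{L_i(M)}\mathbf{x}_{kip}m_i^{L_i(M)-k}\in\{0,\dots,m-1\}$, and the second sum lies in $[0,m_i^{-L_i(M)}]=[0,1/m]$. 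Hence the projection of $Q$ to the $(i,p)$-th coordinate is contained in the single cell $\bigl[A_{i,p}(Q)/m,\,(A_{i,p}(Q)+1)/m\bigr]$. From this I extract two facts. First, the family of integers $\bigl(A_{i,p}(Q)\bigr)_{1\le i\le r,\,1\le p\le N}$ determines $Q$, because $A_{i,p}(Q)$ is the base-$m_i$ number with digits $\mathbf{x}_{1ip},\dots,\mathbf{x}_{L_i(M)ip}$, so it recovers all $\mathbf{x}_{ki}$ with $1\le k\le L_i(M)$, and $Q$ depends only on those coordinates. Second, if $Q'$ is another approximate cube and $\lvert A_{i_0,p_0}(Q)-A_{i_0,p_0}(Q')\rvert\ge 2$ for some slot $(i_0,p_0)$, then the two corresponding coordinate projections lie in cells whose mutual distance is at least $m_{i_0}^{-L_{i_0}(M)}\ge m_1^{-M}$, the last inequality being the defining inequality $m_1^{-M}\le m_{i_0}^{-L_{i_0}(M)}$ for $L_{i_0}(M)$; since $\ell^\infty$-distance dominates distance in any single coordinate, $\dist_\infty(Q,Q')\ge m_1^{-M}$.

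Next I would run the pigeonhole step. Define $\Phi(Q):=\bigl(A_{i,p}(Q)\bmod 2\bigr)_{1\le i\le r,\,1\le p\le N}\in\{0,1\}^{rN}$. As $l\ge 2^{rN}+1>\lvert\{0,1\}^{rN}\rvert$, there are indices $i\ne j$ with $\Phi\bigl(Q_{N,M}(\mathbf{x}_1^{(i)},\dots,\mathbf{x}_r^{(i)})\bigr)=\Phi\bigl(Q_{N,M}(\mathbf{x}_1^{(j)},\dots,\mathbf{x}_r^{(j)})\bigr)$. By the equivalences recorded just before the lemma, the hypothesis $P_{N,M}(\mathbf{x}_1^{(i)},\dots,\mathbf{x}_r^{(i)})\ne P_{N,M}(\mathbf{x}_1^{(j)},\dots,\mathbf{x}_r^{(j)})$ gives $Q_{N,M}(\mathbf{x}_1^{(i)},\dots,\mathbf{x}_r^{(i)})\ne Q_{N,M}(\mathbf{x}_1^{(j)},\dots,\mathbf{x}_r^{(j)})$, so by the first fact above some slot $(i_0,p_0)$ satisfies $A_{i_0,p_0}(Q^{(i)})\ne A_{i_0,p_0}(Q^{(j)})$; equality of the labels forces these two integers to be congruent mod $2$, hence to differ by at least $2$. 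The second fact then yields $\dist_\infty\bigl(Q_{N,M}(\mathbf{x}_1^{(i)},\dots,\mathbf{x}_r^{(i)}),Q_{N,M}(\mathbf{x}_1^{(j)},\dots,\mathbf{x}_r^{(j)})\bigr)\ge m_1^{-M}$, which is the assertion. The only step needing genuine care is the bound $\sum_{k>L_i(M)}\mathbf{y}_{kip}m_i^{-k}\le m_i^{-L_i(M)}$, which confines each cube to one grid cell of side $m_i^{-L_i(M)}$, together with the inequality $m_i^{-L_i(M)}\ge m_1^{-M}$ from the definition of $L_i(M)$; everything else is base-$m_i$ digit extraction and the pigeonhole principle.
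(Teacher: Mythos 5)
Your proposal is correct and follows essentially the same route as the paper: the paper's terse assertion that $l\ge 2^{rN}+1$ forces two cubes whose truncated expansions differ by at least $2m_q^{-L_q(M)}$ in some coordinate is exactly the parity-of-grid-cell pigeonhole argument you spell out, and the remaining geometric step (each cube sits in a single grid cell of side $m_i^{-L_i(M)}\ge m_1^{-M}$, so cell indices differing by $2$ give a gap of one full cell width) is identical. Your write-up just makes explicit the counting that the paper leaves implicit.
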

\begin{proof}
    Since $P_{N,M}(\mathbf{x}_1^{(i)},\ldots,\mathbf{x}_r^{(i)}) \neq P_{N,M}(\mathbf{x}_1^{(j)},\ldots,\mathbf{x}_r^{(j)})$, we have
    \begin{equation*}
        (\mathbf{x}_{11}^{(i)},\ldots,\mathbf{x}_{L_1(M),1}^{(i)},\ldots,\mathbf{x}_{1r}^{(i)},\ldots,\mathbf{x}_{L_r(M),r}^{(i)}) \neq
        (\mathbf{x}_{11}^{(j)},\ldots,\mathbf{x}_{L_1(M),1}^{(j)},\ldots,\mathbf{x}_{1r}^{(j)},\ldots,\mathbf{x}_{L_r(M),r}^{(j)}).
    \end{equation*}
    Furthermore, since $l\geq 2^{rN}+1$, there are $i$ and $j$ for which there exists at least one $q\in\{1,2,\ldots,r\}$ such that
    \begin{equation}\label{distance between distinct points}
        \left\|\sum_{k=1}^{L_q(M)}\frac{\mathbf{x}_{kq}^{(i)}}{m_q^k}
        -\sum_{k=1}^{L_q(M)}\frac{\mathbf{x}_{kq}^{(j)}}{m_q^k}\right\|_{\infty} 
        \geq 2m_q^{-L_q(M)}.
    \end{equation}
    Notice that
    \begin{equation*}
        Q_{N,M}(\mathbf{x}_1,\ldots,\mathbf{x}_r)\subset \left( \sum\limits_{k=1}^{L_1(M)} \frac{\mathbf{x}_{k1}}{m_1^k},\ldots,\sum\limits_{k=1}^{L_r(M)}\frac{\mathbf{x}_{kr}}{m_r^k} \right)+
        [0,m_1^{-L_1(M)}]^N\times\cdots\times[0,m_r^{-L_r(M)}]^N
    \end{equation*}
    for all $(\mathbf{x}_1,\ldots,\mathbf{x}_r)\in(\Omega\vert_N)^{\N}$. Thus inequality \eqref{distance between distinct points} implies that
    \begin{equation*}
        \dist_{\infty}(Q_{N,M}(\mathbf{x}_1^{(i)},\ldots,\mathbf{x}_r^{(i)}),Q_{N,M}(\mathbf{x}_1^{(j)},\ldots,\mathbf{x}_r^{(j)})) \geq m_q^{-L_q(M)}\geq m_1^{-M}
    \end{equation*}
    and we obtain the desired result.
\end{proof}
As in the proof of the upper bound of \eqref{mhdim of sponge system}, choose $s>0$ with $\log_{m_1}Z_N\leq sN$ for all $N\in\N$. Fix $N\in\N$ and $\delta>0$. Without loss of generality, assume that $\Omega\vert_N$ contains at least two points.
Let $M_1=M_1(\frac{\delta\log m_1}{2})$ be introduced in Lemma \ref{Lemma5.12}. Then there exists a Borel subset $R=R(\frac{\delta\log m_1}{2},N)\subset(\Omega\vert_N)^{\N}$ satisfying $\mu_N(R)\geq\frac{1}{2}$ and for any $M\geq M_1$ and $(\mathbf{x}_1,\ldots,\mathbf{x}_r)\in R$ we have
        \begin{equation*}
            \left|\frac{1}{NM}\log \mu_N(P_{N,M}(\mathbf{x}_1,\ldots,\mathbf{x}_r))+\frac{\log Z_N}{N}\right|\leq\frac{\delta\log m_1}{2}.
        \end{equation*}
Fix $\varepsilon>0$ with
\begin{equation}\label{(5.11)}
    \varepsilon<m_1^{-M_1-1} \text{ and }  2^rm_1^s\varepsilon^{\delta/2}<\frac{1}{2}.
\end{equation}

Let $X_{\Omega}\vert_N=\bigcup_{j=1}^{\infty}E_j$ be a cover with $\diam(E_j,\|\cdot\|_{\infty})<\varepsilon$ for each $j\geq 1$. We write $D(E_j):=\diam(E_j,\|\cdot\|_{\infty})$ for brevity. We now prove that
\begin{equation*}
    \sum_{j=1}^{\infty}D(E_j)^{\log Z_N-\delta N}>1.
\end{equation*}
For each $E_j$, let $\widehat{L}_j$ be the unique integer satisfying
\begin{equation*}
    m_1^{-\widehat{L}_j-1}\leq D(E_j) <m_1^{-\widehat{L}_j}.
\end{equation*}
Then $\widehat{L}_j\geq M_1$ since we have assumed that $\varepsilon<m_1^{-M_1-1}$ in \eqref{(5.11)}.
Let
\begin{equation*}
    \mathcal{C}_j=\{P_{N,\widehat{L}_j}(\mathbf{x}_1,\ldots,\mathbf{x}_r): (\mathbf{x}_1,\ldots,\mathbf{x}_r)\in R \text{ with } Q_{N,\widehat{L}_j}(\mathbf{x}_1,\ldots,\mathbf{x}_r)\cap E_j\neq\emptyset\}.
\end{equation*}
Then for $P_{N,\widehat{L}_j}(\mathbf{x}_1,\ldots,\mathbf{x}_r)$ and $P_{N,\widehat{L}_j}(\mathbf{y}_1,\ldots,\mathbf{y}_r)$ in $\mathcal{C}_j$, we have
\begin{equation*}
    \dist_{\infty}(Q_{N,\widehat{L}_j}(\mathbf{x}_1,\ldots,\mathbf{x}_r),Q_{N,\widehat{L}_j}(\mathbf{y}_1,\ldots,\mathbf{y}_r))\leq D(E_j)<\varepsilon<m_1^{-\widehat{L}_j}.
\end{equation*}
By Lemma \ref{Lemma5.13} we further have $|\mathcal{C}_j|\leq 2^{rN}$.

Observe that 
\begin{equation*}
    R\subset\bigcup_{j=1}^{\infty}\bigcup_{P\in\mathcal{C}_j}P
\end{equation*}
and thus
\begin{equation}\label{(5.12)}
    \frac{1}{2}\leq \mu_N(R)\leq \sum_{j=1}^{\infty}\sum_{P\in\mathcal{C}_j} \mu_N(P).
\end{equation}
Since $\widehat{L}_j\geq M_1$, for every $P=P_{N,\widehat{L}_j}(\mathbf{x}_1,\ldots,\mathbf{x}_r)\in\mathcal{C}_j$ with $(\mathbf{x}_1,\ldots,\mathbf{x}_r)\in R$, we have
\begin{equation*}
    \frac{1}{N \widehat{L}_j}\log \mu_N(P)\leq
    -\frac{\log Z_N}{N}+\frac{\delta\log m_1}{2}
\end{equation*}
and therefore
\begin{equation*}
    \begin{split}
        \mu_N(P)&\leq\exp\left\{ \widehat{L}_j\left( -\log Z_N+\frac{N\delta\log m_1}{2}\right) \right\}\\
        &=\exp\left\{ \widehat{L}_j\log m_1\left( -\log_{m_1} Z_N+\frac{N\delta}{2}\right) \right\}\\
        &=m_1^{-\widehat{L}_j\left(\log_{m_1} Z_N-\frac{N\delta}{2}\right)}\\
        &\leq (m_1 D(E_j))^{\log_{m_1} Z_N-\frac{N\delta}{2}}.
    \end{split}
\end{equation*}
Then by \eqref{(5.12)} we have
\begin{equation*}
    \begin{split}
        \frac{1}{2}&\leq \sum_{j=1}^{\infty}\sum_{P\in\mathcal{C}_j} (m_1 D(E_j))^{\log_{m_1} Z_N-\frac{N\delta}{2}} \\
        &\leq \sum_{j=1}^{\infty} 2^{rN}(m_1 D(E_j))^{\log_{m_1} Z_N-\frac{N\delta}{2}}  \text{ (using }  |\mathcal{C}_j|\leq 2^{rN})\\
        &<\sum_{j=1}^{\infty} 2^{rN} m_1^{\log_{m_1} Z_N} D(E_j)^{\log_{m_1} Z_N-\frac{N\delta}{2}}\\
        &\leq \sum_{j=1}^{\infty} 2^{rN} m_1^{sN} D(E_j)^{\log_{m_1} Z_N-\frac{N\delta}{2}} \text{ (using } \log_{m_1} Z_N \leq sN)\\
        &=\sum_{j=1}^{\infty} 2^{rN} m_1^{sN}D(E_j)^{\frac{N\delta}{2}}\cdot D(E_j)^{\log_{m_1} Z_N-N\delta}\\
        &<\sum_{j=1}^{\infty} 2^{rN} m_1^{sN}\varepsilon^{\frac{N\delta}{2}}\cdot D(E_j)^{\log_{m_1} Z_N-N\delta} \text{ (using } D(E_j)<\varepsilon)\\
        &\overset{\eqref{(5.11)}}{<}\sum_{j=1}^{\infty}\frac{1}{2^N} D(E_j)^{\log_{m_1} Z_N-N\delta}
    \end{split}
\end{equation*}
and hence
\begin{equation*}
    \sum_{j=1}^{\infty} D(E_j)^{\log_{m_1} Z_N-N\delta}>1,
\end{equation*}
which yields that $\hdim(X_{\Omega}\vert_N,\|\cdot\|_{\infty},\varepsilon)\geq \log_{m_1} Z_N-\delta N$. Notice that the projection $(X_{\Omega},d_N)\to (X_{\Omega}\vert_N,\|\cdot\|_{\infty})$ defined by
\begin{equation*}
    ([0,1]^{\N})^r \to [0,1]^{rN},
    ((\mathbf{y}_{k1})_{k\in\N},\ldots,(\mathbf{y}_{kr})_{k\in\N}) \mapsto ((\mathbf{y}_{k1})_{1\leq k\leq N},\ldots,(\mathbf{y}_{kr})_{1\leq k\leq N})
\end{equation*}
is $1$-Lipschitz and therefore
\begin{equation*}
   \frac{\hdim(X_{\Omega},d_N,\varepsilon)}{N} \geq \frac{\hdim(X_{\Omega}\vert_N,\|\cdot\|_{\infty},\varepsilon)}{N} \geq \frac{\log_{m_1} Z_N}{N}-\delta.
\end{equation*}
We have used the fact that the Hausdorff dimension does not increase under Lipschitz maps, see e.g. \cite[Proposition 2.8 (iv)]{Rob11}.
Letting $N\to\infty$, by Lemma \ref{Formula for the weighted topological entropy} we have
\begin{equation*}
    \begin{split}
        \liminf_{N\to\infty}\frac{\hdim(X_{\Omega},d_N,\varepsilon)}{N} 
        &\geq \lim_{N\to\infty}\frac{\log_{m_1} Z_N}{N}-\delta=\frac{1}{\log m_1}\lim_{N\to\infty}\frac{\log Z_N}{N}-\delta  \\
        &=\frac{\htop^{\mathbf{a}}(\{(\pi_i(\Omega),\sigma)\}_{i=1}^r,\{\tau_i\}_{i=1}^{r-1})}{\log m_1}-\delta.
    \end{split}
\end{equation*}
Letting $\varepsilon\to0$ and $\delta\to0$, we finally obtain that
\begin{equation}\label{lower bound}
    \lmhdim(X_{\Omega},\sigma,d)\geq \frac{\htop^{\mathbf{a}}(\{(\pi_i(\Omega),\sigma)\}_{i=1}^r,\{\tau_i\}_{i=1}^{r-1})}{\log m_1}.
\end{equation}

Combining \eqref{upper bound} and \eqref{lower bound}, we have proven that
\begin{equation*}
    \mhdim(X_{\Omega},\sigma,d)= \frac{\htop^{\mathbf{a}}(\{(\pi_i(\Omega),\sigma)\}_{i=1}^r,\{\tau_i\}_{i=1}^{r-1})}{\log m_1}.
\end{equation*}
The proof is complete.


\section*{Acknowledgements}
The author was partially supported by National Key Research and Development Program of China (No. 2024YFA1013602, 2024YFA1013600) and  National Natural Science Foundation of China (No. 12171039, 12090012, 12090010).

\bibliographystyle{plain}
\bibliography{Hausdorff}

\end{document}